\newtheorem{theorem}{Theorem}
\newtheorem{pretheorem}{Theorem}
\newtheorem{lemma}{Lemma}
\theoremstyle{definition}
\newtheorem{remark}{Remark}
\crefname{section}{Section}{Sections}
\crefname{theorem}{Theorem}{Theorems}
\crefname{pretheorem}{Theorem}{Theorems}
\crefname{corollary}{Corollary}{Corollaries}
\crefname{lemma}{Lemma}{Lemmas}
\crefname{definition}{Definition}{Definitions}
\crefname{remark}{Remark}{Remarks}
\crefname{example}{Example}{Examples}
\newcommand{\midmid}{\mathrel{}\middle|\mathrel{}}
\renewcommand{\epsilon}{\varepsilon}
\renewcommand{\phi}{\varphi}
\let\Re\relax
\DeclareMathOperator{\Re}{\mathrm{Re}}
\def\showsize#1{\setbox0=\hbox{#1}width=\the\wd0, height=\the\ht0, depth=\the\dp0}
\begin{document}

\title[On error term estimates \`a la Walfisz]
{On error term estimates \`a la Walfisz\\
for mean values of arithmetic functions}
\author[Y. Suzuki]{Yuta Suzuki}
\date{}

\subjclass[2010]{Primary 11N37, Secondary 11L07}
\keywords{Exponential sums, combinatorial decompositions.}

\maketitle

\begin{abstract}
Walfisz~(1963) proved the asymptotic formula
\[
\sum_{n\le x}\phi(n)
=
\frac{3}{\pi^2}x^2+O(x(\log x)^{\frac{2}{3}}(\log\log x)^{\frac{4}{3}}),
\]
which improved the error term estimate of Mertens~(1874)
and had been the best possible estimate for more than 50 years.
Recently, H.-Q.~Liu~(2016) improved Walfisz's error term estimate to
\[
\sum_{n\le x}\phi(n)
=
\frac{3}{\pi^2}x^2+O(x(\log x)^{\frac{2}{3}}(\log\log x)^{\frac{1}{3}}).
\]
We generalize Liu's result to a certain class of arithmetic functions
and improve the result of Balakrishnan and P\'etermann~(1996).
To this end, we provide a refined version of Vinogradov's combinatorial decomposition
available for a wider class of multiplicative functions.
\end{abstract}

\section{Introduction}
Let $\phi(n)$ be the Euler totient function
\[
\phi(n)=n\prod_{p|n}\left(1-\frac{1}{p}\right).
\]
Then it is relatively elementary to prove the asymptotic formula
\[
\sum_{n\le x}\phi(n)=\frac{3}{\pi^2}x^2+O(x\log x)
\]
due to Mertens~\cite{Mertens}.
In 1963, Walfisz~\cite{Walfisz} improved this error term estimate to
\begin{equation}
\label{Walfisz}
\sum_{n\le x}\phi(n)
=\frac{3}{\pi^2}x^2+O(x(\log x)^{\frac{2}{3}}(\log\log x)^{\frac{4}{3}}).
\end{equation}
Walfisz's improvement is based on two methods for exponential sums developed by Vinogradov,
one of which is Vinogradov's mean value theorem
and the other is some combinatorial decomposition.
Recently, H.-Q. Liu~\cite{HQ_Liu}
obtained a further improvement in the $(\log\log x)$-power:
\begin{equation}
\label{HQ_Liu}
\sum_{n\le x}\phi(n)
=\frac{3}{\pi^2}x^2+O(x(\log x)^{\frac{2}{3}}(\log\log x)^{\frac{1}{3}}).
\end{equation}
The main ingredient of Liu's improvement is to replace Vinogradov's combinatorial decompositions
by Vaughan's identity~\cite[Proposition~13.5]{Iwaniec_Kowalski},
which enables us to produce Type II sums with more efficient summation ranges.

Walfisz's result~\cref{Walfisz}
can be generalized to a certain class of arithmetic functions.
Such a generalization was studied
by Balakrishnan and P\'etermann~\cite{Balakrishnan_Petermann}.
Their result can be summarized in the following two theorems.

\begin{pretheorem}[{\cite[Theorem~1]{Balakrishnan_Petermann}}]
\label{Thm:BP}
Let $\alpha$ be a complex number and
\begin{equation}
\label{BP:b}
f(s)=\sum_{n=1}^{\infty}\frac{b(n)}{n^s}
\end{equation}
be a Dirichlet series absolutely convergent for $\sigma>1-\lambda$
with some real number $\lambda>0$. Define arithmetic functions
$a(n)$ and $v(n)$ by
\begin{equation}
\label{BP:a}
\sum_{n=1}^{\infty}\frac{a(n)}{n^s}
=
\zeta(s)\zeta(s+1)^{\alpha}f(s+1)
\end{equation}
and
\begin{equation}
\label{BP:v}
\sum_{n=1}^{\infty}\frac{v(n)}{n^s}=\zeta(s)^{\alpha}f(s)
\end{equation}
for $\sigma>1$, where we take the branch of $\zeta(s+1)^{\alpha}$
by $\arg\zeta(s+1)=0$ on the positive real line.
Then we have
\[
\sum_{n\le x}a(n)
=
\zeta(2)^{\alpha}f(2)x
+
\sum_{r=0}^{[\Re\alpha]}A_r(\log x)^{\alpha-r}
-\sum_{n\le y}\frac{v(n)}{n}\psi\left(\frac{x}{n}\right)
+o(1)
\]
as $x\to\infty$, where the coefficients $(A_r)$
are computable from the Laurent expansion of $\zeta(s)^{\alpha}f(s)$ at $s=1$,
$y=x\exp(-(\log x)^{\frac{1}{6}})$, $\psi(x)=\{x\}-\frac{1}{2}$
and the Landau symbol $o(1)$ depends on the assumptions of this theorem.
\end{pretheorem}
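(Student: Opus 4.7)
The plan is to reduce $\sum_{n\le x}a(n)$ to the claimed main term plus the $\psi$-sum by a convolution-plus-hyperbola argument, then invoke Selberg--Delange to identify the polynomial-in-$\log x$ contributions.

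Writing $w(n) = v(n)/n$, the factorization $\sum_n a(n)/n^s = \zeta(s)\,\zeta(s+1)^\alpha f(s+1)$ gives $a = 1 * w$, so $\sum_{n\le x}a(n) = \sum_{d\le x}w(d)\lfloor x/d\rfloor$. Dirichlet's hyperbola identity at the cut $y$ yields
\[
\sum_{n\le x}a(n) = \sum_{d\le y}w(d)\lfloor x/d\rfloor + \sum_{e\le x/y}W(x/e) - W(y)\lfloor x/y\rfloor,
\]
where $W(t) = \sum_{n\le t}w(n)$. Substituting $\lfloor t\rfloor = t - \tfrac12 - \psi(t)$ in the first and third terms, the two $\tfrac12 W(y)$ contributions cancel and the $\psi$-part of the first sum is precisely the $\psi$-sum of the theorem. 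This yields the key identity
\[
\sum_{n\le x}a(n) + \sum_{n\le y}\frac{v(n)}{n}\psi\!\Big(\frac{x}{n}\Big) = xM(y) - \frac{xW(y)}{y} + W(y)\psi(x/y) + \sum_{e\le x/y}W(x/e),
\]
with $M(t) = \sum_{n\le t}w(n)/n$; it remains to show that the right-hand side equals $\zeta(2)^\alpha f(2)\,x + \sum_r A_r(\log x)^{\alpha-r} + o(1)$.

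Since $\sum_{d\ge 1}w(d)/d = \zeta(2)^\alpha f(2)$ converges absolutely, Abel summation gives $\sum_{d>y}w(d)/d = -W(y)/y + \int_y^\infty W(t)/t^2\,dt$, so $xM(y) - xW(y)/y = \zeta(2)^\alpha f(2)\,x - x\int_y^\infty W(t)/t^2\,dt$. The substitution $u = x/t$ shows $\int_0^{x/y}W(x/u)\,du = x\int_y^\infty W(t)/t^2\,dt$, whence
\[
\sum_{e\le x/y}W(x/e) - x\int_y^\infty\frac{W(t)}{t^2}\,dt
= \Big[\sum_{e\le x/y}W(x/e) - \int_1^{x/y}W(x/u)\,du\Big] - x\int_x^\infty\frac{W(u)}{u^2}\,du.
\]
Applying Selberg--Delange to the series defining $v$ and translating by partial summation gives $W(u) = \sum_{k\ge 0}B_k(\log u)^{\alpha-k} + O(\exp(-c(\log u)^{1/2}))$, the $B_k$ computable from the Laurent expansion of $\zeta(s)^\alpha f(s)$ at $s=1$. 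The identity $\int_x^\infty(\log u)^\beta/u^2\,du = \Gamma(\beta+1,\log x)$ combined with the asymptotic $\Gamma(\beta+1, L) = L^\beta e^{-L}(1 + \beta/L + \beta(\beta-1)/L^2 + \cdots)$ converts $-x\int_x^\infty W(u)/u^2\,du$ into a finite sum of shape $\sum_r A_r(\log x)^{\alpha-r}$; terms of index $r > [\Re\alpha]$ have negative real exponent and are absorbed in the error.

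The main obstacle is controlling the residual errors down to $o(1)$. The Selberg--Delange error in $W$ contributes to $\sum_{e\le x/y}W(x/e)$ at worst $(x/y)\exp(-c(\log y)^{1/2}) \ll \exp((\log x)^{1/6})\exp(-c(\log x)^{1/2}) = o(1)$, which is exactly why the choice $y = x\exp(-(\log x)^{1/6})$ is made. The genuinely delicate point is the bracketed Euler--Maclaurin remainder, whose distributional analysis $\int_1^{x/y}\psi(t)\,dW(x/t) = -\sum_{y<d<x}w(d)\psi(x/d)$ brings in a tail of the $\psi$-sum that must itself be shown to be $o(1)$. This is handled via the Fourier expansion $\psi(t) = -\sum_{k\ge 1}\sin(2\pi kt)/(\pi k)$ combined with exponential-sum bounds on dyadic subintervals of $(y, x]$, the short multiplicative length $x/y = \exp((\log x)^{1/6})$ ensuring the total is $o(1)$ after summing over $\asymp (\log x)^{1/6}$ dyadic pieces once a sufficiently strong Selberg--Delange-type bound is in hand. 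Once this bookkeeping is verified, the asymptotic formula follows.
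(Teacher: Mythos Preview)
The paper does not prove this statement: Theorem~A is quoted from Balakrishnan--P\'etermann and used as a black box (the paper's own work begins at the $\psi$-sum and culminates in Theorem~\ref{Thm:v_thm}). So there is no ``paper's proof'' to compare your sketch against; I can only assess it on its own merits.

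Your hyperbola set-up is correct, and after tracking the Euler--Maclaurin boundary terms properly (your bracket equals $-W(y)\psi(x/y)+\tfrac12 W(x)-\sum_{y<d\le x}w(d)\psi(x/d)$, not just the Stieltjes integral you wrote) one arrives at
\[
\sum_{n\le x}a(n)+\sum_{n\le y}\frac{v(n)}{n}\psi\!\left(\frac{x}{n}\right)
=\zeta(2)^{\alpha}f(2)\,x+\tfrac12 W(x)-x\!\int_x^\infty\!\frac{W(u)}{u^2}\,du-\sum_{y<d\le x}\frac{v(d)}{d}\psi\!\left(\frac{x}{d}\right).
\]
The incomplete-Gamma evaluation of the integral is fine. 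The genuine gap is your treatment of the tail $\sum_{y<d\le x}(v(d)/d)\psi(x/d)$: you claim it is $o(1)$ and propose to prove this via the Fourier expansion of $\psi$ and exponential-sum bounds. Both the target and the method are wrong.

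The tail is \emph{not} $o(1)$. Take $\alpha=1$, $f\equiv1$, so $v\equiv1$. On the block $x/(k+1)<d\le x/k$ one has $\psi(x/d)=x/d-k-\tfrac12$, and an elementary computation gives the block contribution $1-(k+\tfrac12)\log(1+1/k)+O(k^2/x)$. Summing over $k\ge1$ shows the tail tends to the nonzero constant $\sum_{k\ge1}\bigl(1-(k+\tfrac12)\log(1+1/k)\bigr)$; for general $\alpha$ the tail carries terms of size $(\log x)^{\alpha-r}$. These main terms must be \emph{computed} (via the Selberg--Delange expansion of $W$ inserted into the block decomposition $\lfloor x/d\rfloor=k$) and merged with $\tfrac12 W(x)-x\int_x^\infty W/u^2\,du$ to produce the coefficients $A_r$. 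As for the method, exponential sums yield nothing on the topmost blocks: for $d$ near $x$ the phase $hx/d$ is essentially constant, so there is no oscillation to exploit; indeed the paper's own Lemma~\ref{Lem:v_psi} requires $P\le x(\log x)^{-D}$ and is inapplicable precisely where the tail's main contribution lives.
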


\begin{pretheorem}[{\cite[Theorem~1]{Petermann}}]
\label{Thm:BP_error}
Let $v(n)$ be a real-valued multiplicative function. Assume that there exist
real numbers $\alpha_1,\beta\ge0$
and a sequence of real numbers $(V_r)_{r=0}^{\infty}$
such that for any positive integer $\lambda$ and real number $x\ge4$, we have
\begin{equation}
\label{H1}
\tag{\textbf{h1}}
\sum_{n\le x}|v(n)|
=
x\sum_{r=0}^{\lambda+[\alpha_1]}V_r(\log x)^{\alpha_1-r}
+
O_{\lambda}\left(x(\log x)^{-\lambda}\right),
\end{equation}
\begin{equation}
\label{H2}
\tag{\textbf{h2}}
\sum_{n\le x}|v(n)|^2\ll x(\log x)^{\beta},
\end{equation}
and
\begin{equation}
\label{H3}
\tag{\textbf{h3}}
\begin{aligned}
&\text{\hspace{2.8mm}%
the function $v(p)$ is ultimately monotonic with respect to $p$}\\
&\text{and the function $v(p^{\nu})$ is bounded for every prime $p$ and $\nu\ge1$.}
\end{aligned}
\end{equation}
Then for $x\ge4$ and $\theta>0$, we have
\[
\sum_{n\le y}\frac{v(n)}{n}\psi\left(\frac{x}{n}\right)
\ll
(\log x)^{\frac{2(\alpha_1+1)}{3}}(\log\log x)^{\frac{4(\alpha_1+1)}{3}},
\]
where $y=x\exp(-(\log x)^{\theta})$
and the implicit constant depends on $\theta$ and the constants in the above hypotheses
{\upshape\cref{H1}}, {\upshape\cref{H2}}, and {\upshape\cref{H3}}.
\end{pretheorem}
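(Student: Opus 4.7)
My plan is to reduce the sum $S=\sum_{n\le y}(v(n)/n)\psi(x/n)$ to a collection of weighted exponential sums by replacing $\psi(x/n)$ with a truncated Fourier expansion. Using the Vaaler polynomial approximation,
\[
\psi(t)=-\sum_{1\le|h|\le H}\frac{e(ht)}{2\pi ih}+E_H(t),
\]
where $E_H(t)$ is dominated by a non-negative Fej\'er-type trigonometric polynomial, and bounding its contribution by partial summation together with hypothesis (h1), the problem reduces to a uniform bound on
\[
T(h)=\sum_{n\le y}\frac{v(n)}{n}e\!\left(\frac{hx}{n}\right),\qquad 1\le h\le H,
\]
which is then summed against $1/h$, losing only a $\log H$ factor. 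The Fourier cutoff $H$ will be chosen near the end to optimize.

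I would next partition $T(h)$ dyadically into ranges $N<n\le 2N$ and, on each block, apply a Vinogradov-type combinatorial decomposition of $v$. Hypothesis (h3) is what makes this possible for a general multiplicative function: the boundedness of $v(p^\nu)$ for $\nu\ge 2$ lets us discard the square-full part at negligible cost, while the ultimate monotonicity of $v(p)$ lets us strip the coefficient off a prime variable by partial summation. The decomposition yields Type~I sums of the shape $\sum_{m\sim M}\alpha_m\sum_\ell e(hx/(m\ell))$, handled by the Kusmin--Landau first derivative bound on the inner sum followed by summation in $m$ via (h1), and Type~II sums with genuinely bilinear weighted ranges.

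The Type~II sums are the crux. After Cauchy--Schwarz in the outer variable $m$ I would use (h2) to control $\sum_m|v(m)|^2$, converting the inner square to a difference sum
\[
\sum_{\ell,\ell'}\overline{v(\ell)}v(\ell')\sum_{m}e\!\left(\frac{hx(\ell'-\ell)}{m\ell\ell'}\right),
\]
and estimate the resulting exponential sum by Vinogradov's mean value theorem. This is the source of the $(\log x)^{2/3}$ savings characteristic of Vinogradov's method. Balancing the Fourier cutoff $H$, the dyadic scale $N$, and the Type~I/Type~II threshold against each other yields the target bound, with the factor $(\alpha_1+1)$ in both exponents arising from the $(\log x)^{\alpha_1}$ loss incurred each time (h1) is invoked to bound a partial sum of $|v|$, together with the extra $\log x$ produced by the Fourier truncation.

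The main obstacle, as I see it, is performing the combinatorial decomposition cleanly for an \emph{arbitrary} multiplicative function $v$ satisfying only (h1)--(h3), rather than for the familiar $\mu$ or $\Lambda$: Vinogradov's mean value theorem is insensitive to the coefficients, so the burden falls on sharp mean square and higher moment estimates of $v$, and on arranging the bilinear ranges so that no single invocation of Cauchy--Schwarz loses more than a bounded power of $(\log x)^{\alpha_1}$. The monotonicity part of (h3) plays a subtle role here in converting a sum of $v(p)e(\cdot)$ over primes into a nearly unweighted prime exponential sum amenable to Vinogradov's method.
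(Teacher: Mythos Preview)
The paper does not give its own proof of this statement: Theorem~B is quoted verbatim from P\'etermann~\cite{Petermann} as background, and the paper's contribution is the stronger Theorem~\ref{Thm:v_thm}. So there is no proof in the paper to compare against directly. Your sketch is, in outline, a fair rendition of the Walfisz--P\'etermann method that underlies the cited result: Vaaler truncation, dyadic splitting, a Vinogradov-style combinatorial decomposition of the multiplicative weight, Cauchy--Schwarz plus \cref{H2} on the Type~II sums, and partial summation in~$p$ via the monotonicity in \cref{H3} to remove~$v(p)$.

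Two points are worth flagging. First, your treatment of Type~I sums by the Kusmin--Landau first-derivative bound is not what actually drives the saving: for $\sum_{\ell} e(hx/(m\ell))$ with $\ell$ in a dyadic range well below $(hx)^{1/2}$, the derivative is large and Kusmin--Landau gives nothing. The inner sum is estimated instead via Vinogradov's mean value theorem (in the paper, through Karatsuba's Lemma~\ref{Lem:Karatsuba}, packaged as Lemma~\ref{Lem:Walfisz}); Kusmin--Landau enters only to dispose of a boundary range. Second, the paper's route to its improved Theorem~\ref{Thm:v_thm} differs from your direct decomposition of~$v$: it first treats the pure prime sum~\cref{sum_over_primes} by Vaughan's identity (Lemma~\ref{Lem:Liu}, following Liu) rather than Vinogradov's sieve, transfers this to $\sum_p v(p)e(\cdot)$ by partial summation using the bounded-variation condition~\cref{V3} in place of monotonicity, and only then performs a refined Vinogradov-type splitting on square-free products of large primes (Lemma~\ref{Lem:v_sq_free}) after a smooth/rough factorisation $n=mq$. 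This two-stage architecture is what wins the extra $(\log\log x)$-power; your one-stage decomposition of~$v$ recovers only the P\'etermann bound stated in Theorem~B.
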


\begin{remark}
\label{Rem:errata}
Balakrishnan and P\'etermann~\cite[Errata]{Balakrishnan_Petermann} claimed
an error in the proof of Lemma~3 of \cite[p.52--53]{Balakrishnan_Petermann},
which means, in turn, an error in the proof of Theorem~2 in the same paper.
In particular, they remarked that the argument at the top of p.53
is not immediate with the condition (h1) in \cite{Balakrishnan_Petermann}.
Therefore, we stated \cref{Thm:BP_error}, a corrected version of this theorem
given by P\'etermann~\cite{Petermann}.
However, we may use the condition (h2) of \cite{Balakrishnan_Petermann} instead of (h1)
to recover Theorem~2 of \cite{Balakrishnan_Petermann}.
We omit the details on this argument
since \cref{Thm:v_thm} below also recovers this original theorem.
For the related arguments, see the proof of \cref{Lem:TypeII} in \cref{Section:prime}.
\end{remark}

\begin{remark}
\label{Rem:alpha}
Since the parameter $\alpha$ in \cite{Balakrishnan_Petermann}
and the parameter $\alpha$ in \cite{Petermann} have slightly different meanings,
we used the letter $\alpha_1$ to denote the parameter $\alpha$
in \cite{Petermann}. These two parameters are roughly connected
by $\alpha=\alpha_1+1$.
\end{remark}

\begin{remark}
In \cite{Balakrishnan_Petermann}, the parameter $y$ is given by
$y=x\exp(-(\log x)^{b})$ with some fixed real number $b$
such that $0<b<B$, where $B$ is the constant such that
$0<B\le1/2$ and we have $\zeta(s)\neq0$ in the region
\begin{equation}
\label{zero_free}
\sigma>1-(\log(|t|+4))^{-(1-B)},\quad
|t|\colon\text{large}.
\end{equation}
By using the Vinogradov-Korobov zero free region \cite[Theorem~6.1]{Ivic},
we can choose $B$ to be any positive real number $<1/3$,
where the case $B=1/3$ is excluded.
Thus we can take, for example, $b=1/6$.
We chose this specific value in \cref{Thm:BP} for the notational simplicity.
Note that the Vinogradov-Korobov zero free region \cite[Theorem~6.1]{Ivic}
is not the same one as Vinogradov and Korobov originally claimed,
which is regarded to be still unproven today.
Furthermore, by using the Selberg--Delange method as in~\cite[Chapter~II.5]{Tenenbaum},
we may take any $0<b<1/2$ just by using the above zero free region \cref{zero_free}
with $B=0$.
\end{remark}

The main aim of this paper is to improve
the above result of Balakrishnan and P\'etermann.
As we can see from \cref{Thm:BP_error},
their result is based on Walfisz's result \cref{Walfisz}.
So it is natural to ask some improvement
up to the strength of Liu's result \cref{HQ_Liu}.
Our main result can be stated as follows.

\begin{theorem}
\label{Thm:v_thm}
Let $v(n)$ be a complex-valued multiplicative function
such that there exists a real number $C\ge2$
satisfying the following three conditions{\upshape:}
\begin{equation}
\tag{\textbf{V1}}
\label{V1}
\text{$|v(p)|\le C$ for every prime number $p$},
\end{equation}
\begin{equation}
\tag{\textbf{V2}}
\label{V2}
\sum_{n\le x}|v(n)|^2\le Cx(\log x)^{C}\quad(x\ge4),
\end{equation}
\begin{equation}
\tag{\textbf{V3}}
\label{V3}
\sum_{p_n\le x}|v(p_{n+1})-v(p_n)|\le C(\log x)^{C}\quad(x\ge4),
\end{equation}
where $p_n$ is the $n$-th prime number.
Assume that a real number $\kappa\ge0$ satisfies
\begin{equation}
\label{V}
\tag{\textbf{V}}
\sum_{n\le x}\frac{|v(n)|}{n}\ll(\log x)^{\kappa}
\end{equation}
for $x\ge 4$.
Then for $x\ge 4$ and $\theta>0$, we have
\begin{equation}
\label{v_thm:estomate}
\sum_{n\le y}\frac{v(n)}{n}\psi\left(\frac{x}{n}\right)
\ll
(\log x)^{\frac{2\kappa}{3}}(\log\log x)^{\frac{\kappa}{3}},
\end{equation}
where $y\le x\exp(-(\log x)^{\theta})$ and
the implicit constant depends only on $\theta$, $C$,
and the implicit constant in the above condition {\upshape\cref{V}}.
\end{theorem}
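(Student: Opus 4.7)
The plan is to follow the classical Vinogradov--Walfisz framework: first Fourier-expand $\psi(x/n)$ to reduce to exponential sums, then apply a combinatorial identity to split $v(n)$ into Type~I and Type~II pieces, and finally bound each piece by van der Corput and Vinogradov mean-value techniques. The key novelty must lie in a combinatorial identity tailored to general multiplicative $v$, refined enough to produce Type~II bilinear forms with the efficient summation ranges that Liu extracted from Vaughan's identity in the case $v=\mu$.

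First I would use the truncated Fourier expansion of $\psi$ (via Vaaler polynomials) at height $H=(\log x)^{A}$ with a suitably large $A$, reducing the problem to uniformly bounding the exponential sums
\[
S_h \coloneqq \sum_{n\le y}\frac{v(n)}{n}\,e\!\left(\frac{hx}{n}\right), \qquad 1\le h\le H,
\]
with the tail error absorbed by \cref{V}. The assumption $y\le x\exp(-(\log x)^{\theta})$ guarantees that the phases $hx/n$ stay sufficiently far from integers on each dyadic block, so the van der Corput machinery can extract genuine cancellation.

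The hard part is the combinatorial decomposition. Since $v$ is a general multiplicative function, Vaughan's identity is not directly available, and the pointwise unit bound used in Vinogradov's original decomposition is too wasteful to reach Liu-type ranges. The strategy is to split $n$ according to its largest prime factor, exploit $v(pm)=v(p)v(m)$ at squarefree places, and summation-by-parts in the prime variable using \cref{V3} in order to trade the oscillation of $v(p)$ for a smooth averaged factor plus a cheap total-variation remainder, with the contribution of prime powers bounded by \cref{V1}. Iterating this produces a representation of $v(n)\mathbf{1}_{N<n\le 2N}$ as a sum of $O((\log x)^{B})$ bilinear forms $\sum_{mk=n}a_m b_k$ whose coefficient $\ell^{2}$-norms are controlled by \cref{V2} and whose ranges $(M,K)$ are confined to the \emph{Vaughan-like} middle region, avoiding the extremely unbalanced splittings that cost an extra $\log\log$ in Walfisz's original argument; this is precisely what the later \cref{Lem:TypeII} is hinted to do.

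Given the decomposition, the Type~I sums are handled by the first-derivative test applied to the smooth inner factor $k\mapsto hx/(mk)$, while the Type~II sums are bounded by applying Weyl--van der Corput a bounded number of times followed by Vinogradov's mean-value theorem. Cauchy--Schwarz combined with \cref{V2} converts the bilinear coefficients into an acceptable factor, and the mass bound \cref{V} governs the cumulative weight passed through the decomposition. Summing over all dyadic boxes $(M,K)$ and over $h\le H$, then balancing $A$, the dyadic partition parameters, and the van der Corput iteration depth, yields the target bound $(\log x)^{2\kappa/3}(\log\log x)^{\kappa/3}$. The principal technical challenge throughout is the first step: engineering a combinatorial decomposition that simultaneously preserves Liu's tight Type~II ranges and accommodates a general multiplicative $v$ constrained only by \cref{V1}--\cref{V3}.
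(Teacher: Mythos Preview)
Your global framework---Vaaler approximation, a combinatorial split into Type~I/II pieces, and Vinogradov's mean value for the Weyl sums---matches the paper, and you correctly identify that the central obstacle is finding a decomposition for general $v$ that matches Liu's Type~II ranges. But the paper's architecture differs from what you propose in an essential way.

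You suggest building a combinatorial identity for $v(n)$ itself by iterating a largest-prime-factor split together with \cref{V3}. The paper does \emph{not} do this. Instead it keeps Vaughan's identity exactly where Liu used it---on $\Lambda(n)$---and layers everything else on top: first prove the Liu-strength bound for $\sum_{P<p\le P'}e(Q/p)$ via Vaughan, then use \cref{V3} and partial summation merely to pass from unweighted primes to $\sum v(p)e(Q/p)$. The genuine combinatorial step for $v$ is then applied only to square-free integers $q$ with all prime factors $>z$ and $\omega(q)=\nu$: one symmetrises over the $\nu$ prime factors, extracts a Type~I piece whenever some $p_r>P^{1/3}$ (fed into the prime-sum lemma just proved), and for the remaining case where every $p_i\le P^{1/3}$ one uses the partial-product trick $u=p_1\cdots p_r\in(P^{1/3},P^{2/3}]$ to manufacture a bilinear form in the correct Liu range. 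General $n$ is reduced to this square-free rough case by writing $n=mq$ with $p_{\max}(m)\le z<p_{\min}(q)$ and handling large $m$ by smooth-number counts plus \cref{V2}. Your ``iterate and use \cref{V3} inside the decomposition'' idea is not how \cref{V3} enters; it is used only once, to strip the weight $v(p)$ off an already-decomposed prime sum.

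There is also a gap at the endgame. You say that ``balancing'' parameters yields $(\log x)^{2\kappa/3}(\log\log x)^{\kappa/3}$, but you never explain the mechanism. In the paper this shape does not come from any balancing inside the exponential-sum machinery; it comes from a single elementary split of $\sum_{n\le y}$ at $z=\exp(B(\log x)^{2/3}(\log\log x)^{1/3})$. For $n\le z$ one estimates trivially via \cref{V}, giving $(\log z)^{\kappa}$, which is exactly the claimed bound; for $z<n\le y$ the exponential-sum lemma (after Vaaler) gives $P(\log x)^{-A}$ on each dyadic block, so partial summation yields $O(1)$. Without this split your outline has no visible source for the exponent $2\kappa/3$.
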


\begin{remark}
The estimate of the type \cref{V}
automatically follows by the assumption \cref{V2}. Thus, the assumption \cref{V}
is given only for specifying the exponents in the resulting estimate \cref{v_thm:estomate}.
\end{remark}

\begin{remark}
\cref{Thm:BP}, \cref{Thm:BP_error}, and \cref{Thm:v_thm} are related to the sum
\begin{equation}
\label{Walfisz_unweighted}
\sum_{n\le x}\frac{\phi(n)}{n}
\end{equation}
rather than the left-hand side of \cref{Walfisz}.
However, it is possible to translate
this type of sums to the sum of the type \cref{Walfisz} by partial summation.
See the arguments in \cite[p.64--68]{Balakrishnan_Petermann}.
The author is also planning to give the related details somewhere.
In particular, for the sum of the type
\[
\sum_{n\le x}\phi(n)^{it}
\]
with some real number $t\neq0$, we need to employ partial summation
more carefully than in \cite[p.64--68]{Balakrishnan_Petermann}.
\end{remark}

The estimate \cref{v_thm:estomate} gives an error term estimate
of the strength of Liu's result \cref{HQ_Liu}.
It is natural to apply Liu's approach~\cite{HQ_Liu} to prove \cref{v_thm:estomate}.
However, the author of this paper could not succeed to use Liu's approach straightforwardly.
In order to prove \cref{Thm:v_thm}, we first prepare an estimate
for the exponential sum over primes
\begin{equation}
\label{sum_over_primes}
\sum_{P<p\le P'}e\left(\frac{Q}{p}\right),\quad
P<P'\le2P
\end{equation}
similarly to Main Lemma of P\'etermann~\cite{Petermann}.
Main Lemma of P\'etermann can be improved easily
by using Liu's approach, which is just a translation of Liu's proof
for the M\"obius function in terms of the von Mangoldt function.
However, our final result deals with more general arithmetic functions $v(n)$,
and the author could not find a combinatorial identity
for such general multiplicative functions.
Thus we need to return to the original approach of Vinogradov.
In order to achieve the improved error term estimate
even by using the decomposition of the Vinogradov-style, we use
such a decomposition finer
than the decomposition used by Walfisz or by P\'etermann.
For the details, see \cref{Lem:v_sq_free}.

Not only improving the final error term estimate itself,
\cref{Thm:v_thm} also relaxes the necessary assumptions in \cref{Thm:BP_error}.
We now compare some assumptions of \cref{Thm:BP} and \cref{Thm:v_thm}:
\begin{enumerate}
\item
We first removed the assumption that $v(n)$ is real-valued.
This is done just by remove the monotonicity on $v(p)$
as in \cref{H3} and by introducing a weaker assumption \cref{V1} and \cref{V3}.
Note that we also removed the assumption on the values of $v(n)$
at the higher prime powers.
\item The condition \cref{V2} just states the same assumption as \cref{H2}.
\item We removed the strong assumption \cref{H1} and replaced by \cref{V},
which is the same assumption as in Theorem~2 of \cite{Balakrishnan_Petermann}.
So, in particular, our \cref{Thm:v_thm} recovers
Theorem~2 of \cite{Balakrishnan_Petermann}.
\item As we mentioned in \cref{Rem:alpha}, we roughly have $\alpha_1=\alpha-1$.
In \cref{Thm:BP_error}, we assumed that $\alpha_1\ge0$
so that, in principle, we are restricted to the case $\alpha\ge1$.
In \cref{Thm:v_thm}, we roughly have $\kappa=\alpha$
and we assumed only $\kappa\ge0$. Thus, \cref{Thm:v_thm}
is applicable for a wider range of $\alpha$ than \cref{Thm:BP_error}.
\end{enumerate}
\noindent
As we can see from the above comparison,
\cref{Thm:v_thm} is available for a wider class of multiplicative function
than \cref{Thm:BP_error}.

Actually, \cref{Thm:v_thm} is relaxed enough to obtain the following theorem.
\begin{theorem}
\label{Thm:BP_completed}
Under the same hypothesis as in \cref{Thm:BP}, we have
\[
\sum_{n\le x}a(n)
=
\zeta(2)^{\alpha}f(2)x
+
\sum_{r=0}^{[\Re\alpha]}A_r(\log x)^{\alpha-r}
+O((\log x)^{\frac{2|\alpha|}{3}}(\log\log x)^{\frac{|\alpha|}{3}})
\]
for $x\ge4$,
where the implicit constant depends on the hypothesis in \cref{Thm:BP}.
\end{theorem}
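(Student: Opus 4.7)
The plan is to combine \cref{Thm:BP} with our \cref{Thm:v_thm}. By \cref{Thm:BP}, the problem reduces to showing that
\[
R(x) := \sum_{n \le y} \frac{v(n)}{n} \psi\left(\frac{x}{n}\right) \ll (\log x)^{\frac{2|\alpha|}{3}}(\log\log x)^{\frac{|\alpha|}{3}},
\]
where $y = x\exp(-(\log x)^{1/6})$ and $v$ is the multiplicative function defined by \cref{BP:v}. A direct application of \cref{Thm:v_thm} to $v$ itself is awkward: verifying \cref{V1} and \cref{V3} requires pointwise and variational control on $v(p)$, which amounts to corresponding control on the Dirichlet coefficients $b(p)$, and this is not immediate from the mere absolute convergence of $f(s)$ in $\sigma > 1 - \lambda$. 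The key idea is instead to exploit the factorization $v = d_\alpha * b$, where $d_\alpha$ is the generalized divisor function with Dirichlet series $\zeta(s)^\alpha$, and to apply \cref{Thm:v_thm} only to $d_\alpha$, carrying $b$ along as an outer weight.

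Unfolding the convolution,
\[
R(x) = \sum_{e \ge 1} \frac{b(e)}{e} \sum_{d \le y/e} \frac{d_\alpha(d)}{d} \psi\left(\frac{x/e}{d}\right),
\]
we split the outer sum at the threshold $E = (\log x)^{C_1}$, with $C_1 = C_1(|\alpha|, \lambda)$ chosen sufficiently large. For $e \le E$ we invoke \cref{Thm:v_thm} applied to $d_\alpha$ with $x' = x/e$, $y' = y/e$, and $\theta = 1/6$. The function $d_\alpha$ satisfies the hypotheses uniformly in the complex parameter $\alpha$: \cref{V1} holds because $|d_\alpha(p)| = |\alpha|$; \cref{V3} is trivial since $d_\alpha(p) = \alpha$ is constant in $p$; \cref{V2} follows from the pointwise bound $|d_\alpha(n)| \le d_{|\alpha|}(n)$ (derived by multiplicativity from $|\binom{\alpha + k - 1}{k}| \le \binom{|\alpha| + k - 1}{k}$) together with the standard mean-square estimate $\sum_{n \le x} d_{|\alpha|}(n)^2 \ll x(\log x)^{|\alpha|^2}$; and \cref{V} holds with $\kappa = |\alpha|$ via the analogous bound $\sum_{n \le x} d_{|\alpha|}(n)/n \ll (\log x)^{|\alpha|}$. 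Since $\log(x/e) \le \log x$, we have $y' \le x'\exp(-(\log x')^{1/6})$, so the hypothesis of \cref{Thm:v_thm} is met and the inner sum is $\ll (\log x)^{2|\alpha|/3}(\log\log x)^{|\alpha|/3}$ uniformly in $e \le E$. Since $\sum_{e} |b(e)|/e$ converges (as $1 > 1 - \lambda$), the contribution from $e \le E$ respects the target bound.

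For $e > E$ we apply the trivial bound $|\psi| \le \frac{1}{2}$ and \cref{V} for $d_\alpha$ to obtain
\[
\left| \sum_{d \le y/e} \frac{d_\alpha(d)}{d} \psi\left(\frac{x/e}{d}\right) \right| \ll (\log x)^{|\alpha|},
\]
while the tail of $|b|$ is controlled via
\[
\sum_{e > E} \frac{|b(e)|}{e} \le E^{-\lambda/2} \sum_e \frac{|b(e)|}{e^{1 - \lambda/2}} \ll (\log x)^{-C_1 \lambda/2},
\]
using the absolute convergence of $f$ at $\sigma = 1 - \lambda/2$. Choosing $C_1 \ge 2|\alpha|/(3\lambda)$ absorbs this contribution into the main estimate. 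The main technical point to check will be the uniform-in-$\alpha$ version of the mean-square bound in \cref{V2} for $d_\alpha$ when $\alpha$ is complex and non-integer, which we expect to follow from a Selberg--Delange analysis of the Dirichlet series $\sum_n d_{|\alpha|}(n)^2/n^s$ near $s = 1$.
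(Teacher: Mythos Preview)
Your approach is essentially the paper's: decompose $v=\tau_\alpha\ast b$, apply \cref{Thm:v_thm} to the inner sum over $\tau_\alpha$, and conclude by the absolute convergence of $\sum_m |b(m)|/m$. Two minor simplifications are available: the split at $E$ is unnecessary, since for every $e\le y$ one has $x/e\ge x/y=\exp((\log x)^{1/6})\ge4$ and $y/e\le(x/e)\exp(-(\log(x/e))^{1/6})$, so \cref{Thm:v_thm} applies uniformly; and the conditions \cref{V1}--\cref{V3}, \cref{V} for $\tau_\alpha$ follow from a short Euler-product computation (\cref{Lem:tau_condition} in the paper) rather than requiring Selberg--Delange.
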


This enables us to apply the method of Balakrishnan--P\'etermann
directly to the generating function \cref{BP:a}
without checking any additional assumption
besides the assumptions in \cref{Thm:BP}.
We shall prove \cref{Thm:BP_completed} in \cref{Section:BP_completed}.

\begin{remark}
Recently, Drappeau and Topacogullari~\cite{Drappeau_Topacogullari}
gave a new combinatorial decomposition for general $\tau_{\alpha}(n)$ with any complex number $\alpha$
in the context of the generalized Titchmarsh divisor problems.
As we will see in the proof of \cref{Thm:BP_completed},
especially in the argument after \cref{BP_completed:decompose},
the function $v(n)$ in \cref{Thm:BP} can be reduced to $\tau_{\alpha}(n)$.
More precisely, $v(n)$ can be decomposed into the convolution of $\tau_{\alpha}(n)$
and an arithmetic function $b(n)$ for which the series \cref{BP:b} converges absolutely for $\sigma<1$.
(For the same principle, see also Lemma~2.2 of \cite{Drappeau_Topacogullari}.)
Thus, for the proof of \cref{Thm:BP_completed},
the method of Drappeau and Topacogullari is available.
However, our \cref{Thm:v_thm} deals with a slightly wider class of arithmetic functions.
For example, the multiplicative function $v(n)$ defined by
\[
v(p)=2+\frac{1}{\log\log(p+4)},\quad
v(p^{\nu})=0\quad(\nu\ge 2)
\]
satisfies the conditions \cref{V1}, \cref{V2}, and \cref{V3},
but cannot be decomposed into the convolution of
$\tau_{\alpha}(n)$ and $b(n)$ for which the series \cref{BP:b} converges for $\sigma<1$.
Indeed, the only possible choice of $\alpha$ for this example is $\alpha=2$,
but for this case,
\[
b(p)=v(p)+\tau_{-2}(p)=\frac{1}{\log\log(p+4)}
\]
so that
\[
\sum_{n\le x}\frac{|b(n)|}{n}
\ge
\sum_{p\le x}\frac{|b(p)|}{p}
=
\sum_{p\le x}\frac{1}{p\log\log(p+4)}
\to
\infty
\]
as $x\to\infty$. Thus, at least in \cref{Thm:v_thm},
our combinatorial decomposition seems to be slightly more general than
the decomposition of Drappeau and Topacogullari.
\end{remark}

\section{Notation}
The letter $p$ denotes a prime number
and $p_n$ denotes the $n$-th prime number.
By $s=\sigma+it$, we denote a complex variable $s$.

For a positive integer $n$,
we denote by $\omega(n)$ the number of distinct prime factors of $n$
and by $\Omega(n)$ the number of prime factors of $n$ counted with multiplicity.
As usual, $\Lambda(n)$ is the von Mangoldt function,
$\mu(n)$ is the M\"obius function, $\phi(n)$ is the Euler totient function,
and $\sigma(n)$ is the divisor summatory function.
The function $\tau(n)$ is the divisor function,
i.e.~it denotes the number of positive divisors of $n$.
More generally, for a complex number $\alpha$,
we define the divisor function $\tau_{\alpha}(n)$ by the generation function
\[
\zeta(s)^{\alpha}=\sum_{n=1}^{\infty}\frac{\tau_{\alpha}(n)}{n^s}\quad\sigma>1,
\]
where the branch of $\zeta(s)^{\alpha}$ is taken by $\arg\zeta(s)=0$ for $s>1$.
Note that $\tau_{2}(n)=\tau(n)$.

For a positive integer $n$, we define $p_{\max}(n)$ and $p_{\min}(n)$
be the largest and the smallest prime factor of $n>1$, respectively,
and as a convention, we define $p_{\max}(1)=1$ and $p_{\min}(1)=+\infty$.
By $\psi(x,y)$, we denote the number of $y$-smooth numbers $\le x$, i.e.
\[
\psi(x,y)=|\{n\le x\mid p_{\max}(n)\le y\}|.
\]

We use the conditions
\begin{center}
\cref{H1},\ \cref{H2},\ \cref{H3},\ 
\cref{V1},\ \cref{V2},\ \cref{V3},\ \cref{V}
\end{center}
on multiplicative functions. See \cref{Thm:BP} and \cref{Thm:v_thm}.
The letters $C$ and $\kappa$ always denote the constants
in \cref{V1},\ \cref{V2},\ \cref{V3},\ \cref{V}.
By saying a multiplicative function,
we exclude the constant function $0$.

We denote the fractional part of a real number $x$ by $\{x\}$
and let
\[
\psi(x)=\{x\}-\frac{1}{2}.
\]
The function $e(x)$ is defined by $e(x)=e^{2\pi ix}$ as usual.

The letter $B$ denotes the constant used for describing admissible ranges
of several parameters in each Theorem or Lemma,
e.g.~see the condition \cref{TypeII:UV_range}.
Thus $B$ has the same meaning
during a fixed Theorem or Lemma and their proof,
but it may have different meanings in different context.
In order to denote the constant $B$ used in some preceding context,
we use letters $B_1,B_2,\ldots$ instead of $B$.
We emphasize the dependance of $B$ on some letters $A,C,\ldots$
by writing $B=B(A,C,\ldots)$.

If Theorem or Lemma is stated
with the phrase ``where the implicit constant depends only on $a,b,c,\ldots$'',
then every implicit constant in the corresponding proof
may also depend on $a,b,c,\ldots$ even without special mentions.

\section{Exponential sums over primes}
\label{Section:prime}
In this section, we prepare an estimate for exponential sums over primes.
As we have mentioned,
this estimate corresponds to Main Lemma of P\'etermann~\cite{Petermann}
or the result of Liu~\cite{HQ_Liu}. We follow the method of Liu in order to
improve Main Lemma of P\'etermann~\cite{Petermann} and simplify the proof.

We first prepare an estimate for the exponential sum
\[
\sum_{P<n\le P'}e\left(\frac{Q}{n}\right),\quad
P<P'\le2P,\quad
e(x)=e^{2\pi ix}
\]
based on Vinogradov's mean value theorem.
We use Vinogradov's mean value theorem through an application
of the following lemma due to Karatsuba~\cite{Karatsuba}.

\begin{lemma}[Karatsuba's lemma~{\cite[Theorem 1]{Karatsuba}}]
\label{Lem:Karatsuba}
Let $k$ be a positive integer, $X,P$ be real numbers with $P\ge1$
and $f(x)$ be a real-valued function defined 
and $(k+1)$-times continuously differentiable on $[X,X+P]$.
Assume that there are four constants
\begin{equation}
\label{Karatsuba:c}
0<c_0<1,\quad
0<c_3\le c_2<c_1<1
\end{equation}
and positive integers
\begin{equation}
\label{Karatsuba:rj}
c_0k\le r\le k,\quad
1\le j_1<j_2<\cdots<j_r\le k
\end{equation}
satisfying the following conditions{\upshape:}
\begin{enumerate}
\renewcommand{\labelenumi}{(\Alph{enumi})}
\item we have
\[
\left|\frac{f^{(k+1)}(x)}{(k+1)!}\right|\le P^{-c_1(k+1)}
\]
on $[X,X+P]$,
\item we have for every $j\in\{j_1,\ldots,j_r\}$,
\[
P^{-c_2j}\le\left|\frac{f^{(j)}(x)}{j!}\right|\le P^{-c_3j}
\]
on $[X,X+P]$.
\end{enumerate}
Then there is a constant $0<\gamma\le1$ such that for any $P_1\le P$,
\[
\left|\sum_{X<n\le X+P_1}e(f(n))\right|\ll P^{1-\frac{\gamma}{k^2}},
\]
where the constant $\gamma$ and the implicit constant
depends only on $c_0,c_1,c_2$, and $c_3$.
\end{lemma}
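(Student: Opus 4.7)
The plan is to prove Karatsuba's lemma by the Vinogradov moment method, converting the pointwise estimate of $\sum e(f(n))$ into a mean-value estimate via Taylor expansion and then invoking Vinogradov's mean value theorem. Condition~(A) will control the Taylor remainder, and condition~(B) will place the coefficients of the Taylor polynomial in admissible boxes so that the mean-value bound can be promoted to a pointwise bound.

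First I would subdivide $(X, X+P_1]$ into $\lceil P_1/P_0\rceil$ blocks of length at most $P_0 \asymp P^{c_1}$. On a typical block $(X_0, X_0+P_0]$, Taylor expanding $f$ around $X_0$ to degree $k$ and invoking condition~(A) gives a remainder of size $\le P_0^{k+1}\cdot P^{-c_1(k+1)} \ll 1$, so up to a bounded multiplicative error the block sum reduces to
\[
S_0 = \sum_{0 \le m \le P_0} e\bigl(\alpha_1 m + \alpha_2 m^2 + \cdots + \alpha_k m^k\bigr),
\qquad \alpha_j = \frac{f^{(j)}(X_0)}{j!}.
\]
Next, I would raise $|S_0|$ to the $2s$-th power with $s$ of size $ck^2$ for a small absolute constant $c$, and reduce $|S_0|^{2s}$ to an integral of a Vinogradov-type exponential polynomial. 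Because condition~(B) controls only the derivatives indexed by $j_1,\ldots,j_r$, not all of $1,\ldots,k$, I would run a partial Vinogradov integration: integrate only against the $r$ variables $\alpha_{j_1},\ldots,\alpha_{j_r}$ and treat the remaining $\alpha_j$'s by a trivial $L^\infty$ bound. Vinogradov's mean value theorem in its modern sharp form then gives an $L^{2s}$ estimate of the form $P_0^{\,1-c'/k}$, and a standard box-covering argument converts it into a pointwise bound $|S_0| \ll P_0^{\,1 - \gamma'/k^2}$ for our specific $\alpha_j$'s. Summing over the $\ll P_1/P_0$ blocks yields $|S| \ll P^{1 - \gamma/k^2}$.

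The main obstacle is the passage from the $L^{2s}$ mean-value bound, which is averaged over all coefficients in $[0,1]^k$, to the pointwise bound for the specific polynomial coming from the Taylor expansion of $f$. Condition~(B) is tailored exactly for this step: the two-sided estimate $P^{-c_2 j} \le |\alpha_j| \le P^{-c_3 j}$ means that each $\alpha_j$ lives in an annulus that is neither too thin nor too fat on the relevant scale, so that the shift argument used to transfer the integral bound to a pointwise one incurs only polynomial-in-$k$ cost, which is absorbed in the exponent $1 - \gamma/k^2$. A secondary hurdle is bookkeeping the dependence of $\gamma$ on $c_0, c_1, c_2, c_3$: one must optimize $s$ and $P_0$ carefully, and the condition $r \ge c_0 k$ is precisely what guarantees that enough of the $\alpha_j$'s are under control for the partial Vinogradov integration to reach the claimed exponent.
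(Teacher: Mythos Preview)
The paper does not prove this lemma; it simply quotes it as \cite[Theorem~1]{Karatsuba} and moves on. So there is no ``paper's own proof'' to compare against beyond the citation itself.

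Your sketch is a faithful outline of the Vinogradov--Karatsuba method that Karatsuba's original paper uses: subdivide into blocks of length $P_0\asymp P^{c_1}$, Taylor-expand to degree $k$ with remainder controlled by condition~(A), reduce each block to a Weyl sum with polynomial phase, raise to the $2s$-th power with $s\asymp k^2$, and invoke Vinogradov's mean value theorem. You also correctly identify the crux of the argument, namely that condition~(B) on the $r\ge c_0k$ indices $j_1,\ldots,j_r$ is what allows one to pass from the averaged $L^{2s}$ bound to a pointwise bound for the specific coefficients $\alpha_j=f^{(j)}(X_0)/j!$, via a box-covering/shift argument whose cost is absorbed into the exponent $1-\gamma/k^2$. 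Two small remarks: first, Karatsuba's 1971 proof predates the sharp Vinogradov mean value theorem, so the ``modern sharp form'' is more than is needed (and would only affect the value of $\gamma$, not the shape of the conclusion); second, your phrase ``partial Vinogradov integration over only the $r$ variables'' is slightly nonstandard---in Karatsuba's argument one integrates over the full torus but uses the two-sided bounds on the $r$ distinguished coefficients in the localization step, which comes to the same thing. None of this is a gap; as a high-level plan your proposal is correct and aligned with the cited source.
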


\begin{remark}
In \cite{Karatsuba}, there is one more constant $c_4$.
However, since this constant $c_4$ can be taken by $c_4=(c_1-c_2)/2$
in the above setting, we did not introduce the constant $c_4$
for the notational simplicity.
\end{remark}

We also make use of the Kusmin--Landau inequality:
\begin{lemma}[Kusmin--Landau inequality~{\cite[Theorem~2.1]{Graham_Kolesnik}}]
\label{Lem:Kusmin_Landau}
Let $X,P,\lambda$ be real numbers with $P\ge1$ and $\lambda>0$,
and $f(x)$ be a real-valued function defined and continuously differentiable on $[X,X+P]$.
Assume also that $f'(x)$ is monotonic and $\lambda<|f'(x)|<1-\lambda$ on $[X,X+P]$.
Then, we have
\[
\sum_{X<n\le X+P}e(f(n))\ll\lambda^{-1},
\]
where the implicit constant is absolute.
\end{lemma}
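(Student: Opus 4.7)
My plan is to prove this classical inequality by combining a simple telescoping identity with Abel summation, and to extract the correct $\lambda^{-1}$ dependence by exploiting the explicit form of the weights that arise.

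First I would apply the mean value theorem on each unit interval $[n,n+1]\subset[X,X+P]$ to obtain $\xi_{n}\in(n,n+1)$ with $\delta_{n}:=f(n+1)-f(n)=f'(\xi_{n})$. Because the points $\xi_{n}$ sit in disjoint intervals and are therefore strictly increasing, the monotonicity of $f'$ transfers to the sequence $(\delta_{n})$; moreover, by the hypothesis, every $\delta_{n}$ lies in $(\lambda,1-\lambda)$ or in $(-(1-\lambda),-\lambda)$ depending on the sign of $f'$. The telescoping identity
\[
e(f(n+1))-e(f(n))=e(f(n))\bigl(e(\delta_{n})-1\bigr)
\]
then lets me rewrite each summand as $e(f(n))=\beta_{n}\bigl(e(f(n+1))-e(f(n))\bigr)$ with $\beta_{n}=(e(\delta_{n})-1)^{-1}$.

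The core step is to use the explicit formula
\[
\beta_{n}=-\tfrac{1}{2}-\tfrac{i}{2}\cot(\pi\delta_{n}),
\]
which follows from $e(\delta)-1=2i\sin(\pi\delta)\,e^{i\pi\delta}$. Three features are then immediate: $|\beta_{n}|\ll\lambda^{-1}$ since $|\sin(\pi\delta_{n})|\gg\lambda$; the real part $\Re\beta_{n}$ is the constant $-\tfrac{1}{2}$; and $\Im\beta_{n}$ is monotonic in $n$ because $\cot$ is monotonic on each open interval between consecutive integers and $(\delta_{n})$ stays inside one such interval. Abel summation applied to $\sum_{n}\beta_{n}\bigl(e(f(n+1))-e(f(n))\bigr)$ produces a boundary contribution of size $O(|\beta_{N_{2}}|)=O(\lambda^{-1})$ together with $\sum_{n}|\beta_{n+1}-\beta_{n}|$ weighted against partial sums of $e(f(n+1))-e(f(n))$, which are telescoping and hence bounded by $2$. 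Thanks to the monotonicity of $\Im\beta_{n}$ the variation telescopes as well, giving $\sum_{n}|\beta_{n+1}-\beta_{n}|=|\Im\beta_{N_{2}}-\Im\beta_{N_{1}}|\ll\lambda^{-1}$, and the desired bound drops out.

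The only points that require a little care are the transfer of monotonicity from $f'$ to $(\delta_{n})$ via the disjointness of the intervals containing the $\xi_{n}$, and a clean accounting of boundary terms in the Abel summation; neither poses a real obstacle, which is why this argument yields $O(\lambda^{-1})$ with an absolute implicit constant.
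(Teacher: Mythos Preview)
Your argument is correct and is essentially the classical proof of the Kusmin--Landau inequality (the one given in Graham--Kolesnik, Theorem~2.1). Note, however, that the paper does not supply its own proof of this lemma at all: it simply quotes the result from \cite{Graham_Kolesnik} and moves on. So there is nothing in the paper to compare your proof against; you have reconstructed the standard reference proof, with the cotangent identity $\beta_n=-\tfrac{1}{2}-\tfrac{i}{2}\cot(\pi\delta_n)$ doing exactly the work it does in Graham--Kolesnik.
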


\begin{lemma}
\label{Lem:Walfisz}
Let $P,P',Q\ge4$ be real numbers with $P<P'\le 2P$.
Then
\[
\sum_{P<n\le P'}e\left(\frac{Q}{n}\right)
\ll
P\exp\left(-\gamma\frac{(\log P)^3}{(\log Q)^2}\right)+P^2Q^{-1},
\]
where the implicit constant and the constant $\gamma>0$ are absolute.
\end{lemma}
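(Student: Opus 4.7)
My plan is to apply the three tools assembled above—the trivial bound, \cref{Lem:Kusmin_Landau}, and \cref{Lem:Karatsuba}—to the smooth function $f(x)=Q/x$ on $[P,P']$, partitioning by the size of $\eta:=\log Q/\log P$. The starting calculation is
\[
\frac{f^{(j)}(x)}{j!}=(-1)^j\frac{Q}{x^{j+1}}\asymp\frac{Q}{P^{j+1}},\quad x\in[P,2P],
\]
so both hypotheses of Karatsuba's lemma become simple inequalities in $\eta$ and in the parameters $k,j$. Write $S=\sum_{P<n\le P'}e(Q/n)$.

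When $\eta\le 1$, i.e.\ $Q\le P$, the trivial bound $|S|\le P'-P\le P\le P^2/Q$ already gives the second term of the claimed inequality. When $1<\eta\le 2$, the derivative $|f'(x)|=Q/x^2$ is monotonically decreasing on $[P,2P]$ and takes values in $[Q/(4P^2),\,Q/P^2]\subset(0,1]$. If $Q/P^2\le 1/2$ then the nearest-integer distance $\|f'\|$ equals $|f'|$ and is bounded below by $Q/(4P^2)$, so \cref{Lem:Kusmin_Landau} with $\lambda\asymp Q/P^2$ yields $|S|\ll P^2/Q$. For the narrow band $Q/P^2\in(1/2,1]$ one first subdivides $[P,P']$ at the single point $x_0=\sqrt{2Q}$ where $|f'|$ crosses $1/2$ and applies the same lemma on each piece with the corresponding $\lambda$, preserving the bound $P^2/Q$.

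For $\eta>2$, I would apply \cref{Lem:Karatsuba} with fixed constants $c_0=1/3$, $c_1=1/2$, $c_2=3/4$, $c_3=1/4$, and with $k=\lceil 4(\eta-1)\rceil$. Condition (A) then reads $\eta\le (k+3)/2$, which holds by the choice of $k$. For condition (B), the admissible exponents form the interval $j\in[4(\eta-1)/3,\,4(\eta-1)]$, of length $8(\eta-1)/3\ge 8/3$; taking $(j_1,\ldots,j_r)$ to be all integers in this interval gives $r\ge (8/3)(\eta-1)-1\ge c_0 k=k/3$ (since $\eta\ge 2$) and $j_r\le 4(\eta-1)\le k$. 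Karatsuba's lemma then produces
\[
|S|\ll P^{1-\gamma_0/k^2}\ll P\exp\!\left(-\gamma\,\frac{(\log P)^3}{(\log Q)^2}\right),
\]
the second inequality using $k\ll\eta=\log Q/\log P$ and renaming the absolute constant, which matches the first term of the claimed bound.

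The main obstacle will be the narrow transitional band $Q\approx P^2$, where Kusmin-Landau's hypothesis $\|f'\|\ge\lambda$ breaks down exactly when $f'$ sweeps past an integer, and Karatsuba's range for $j$ is only barely nonempty. Since $|f'|\le 1$ throughout this band, at most one such breakpoint occurs in $[P,P']$, and I expect to handle it by splitting the interval there, applying Kusmin-Landau separately on the two subintervals with shifted phases $f(x)$ and $f(x)-x$ (which leaves $e(f(n))$ invariant on integers), and absorbing the short ``bad'' sliver into the trivial bound. The resulting losses affect only the absolute constants $\gamma$ and the implicit constant in the final statement.
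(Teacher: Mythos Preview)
Your overall strategy---trivial or Kusmin--Landau bounds when $Q$ is small relative to $P^2$, and Karatsuba's lemma when $\eta=\log Q/\log P$ is large---matches the paper's. However, there is a genuine gap in your application of \cref{Lem:Karatsuba}: the hypothesis \cref{Karatsuba:c} requires $c_2<c_1$ (indeed, the constant $c_4$ suppressed from Karatsuba's original statement is $c_4=(c_1-c_2)/2$, which must be positive). Your choice $c_1=1/2$, $c_2=3/4$ violates this, so the lemma is not applicable as written.

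This is not a harmless slip. If you swap to $c_1=3/4$, $c_2=1/2$, condition~(A) forces $k\gtrsim 4(\eta-1)$ while the admissible $j$-range in~(B) shrinks to $[4(\eta-1)/3,\,2(\eta-1)]$, an interval of length only $2(\eta-1)/3$; with $c_0=1/3$ the requirement $r\ge c_0k$ then fails for generic $\eta$. The paper resolves this tension by pushing both $c_1$ and $c_2$ close to~$1$ (specifically $c_1=25/26$, $c_2=23/24$) and taking $k=[26\eta]$ with a correspondingly small $c_0=1/39$, which keeps the $j$-interval long relative to $k$. The paper also imposes $P>2^{12}$ to absorb the factor $2^{j+1}$ arising from $x\le 2P$ in the lower bound of~(B), a point your sketch does not address. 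Finally, the paper splits the two regimes at $P=Q^{2/3}/2$ rather than at $\eta=2$, which guarantees $|f'|\le 1/2$ outright and sidesteps the subdivision near $Q/P^2\approx 1$ that you flag as the main obstacle.
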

\begin{proof}
Let $f(x)=Q/x$.
We may assume $P>2^{12}$ since otherwise
\[
P\exp\left(-\gamma\frac{(\log P)^3}{(\log Q)^2}\right)
\ge
P\exp\left(-\frac{(12\log 2)^3}{(2\log 2)^2}\right)
\gg
P
\]
so the assertion is trivial.
We may also assume $P\le Q^{\frac{2}{3}}/2$
since if $P>Q^{\frac{2}{3}}/2$, then we may apply \cref{Lem:Kusmin_Landau}.
Indeed, in this case, $f'(x)$ is increasing on $[P,2P]$,
\[
|f'(x)|\le QP^{-2}\le2^{\frac{3}{2}}P^{-\frac{1}{2}}\le\frac{1}{2}
\]
and $|f'(x)|\gg QP^{-2}$ on $[P,2P]$.
Thus, \cref{Lem:Kusmin_Landau} gives
\[
\sum_{P<n\le P'}e\left(\frac{Q}{n}\right)\ll P^2Q^{-1}.
\]
Therefore, we shall consider the case $2^{12}<P\le Q^{\frac{2}{3}}/2$.

We apply \cref{Lem:Karatsuba} with
\begin{equation}
\label{EQ:Pkc_choice}
\begin{gathered}
X=P,\quad
P_1=P'-P,\quad
k=\left[26\frac{\log Q}{\log P}\right],\\
c_0=\frac{1}{39},\quad
c_1=\frac{25}{26},\quad
c_2=\frac{23}{24},\quad
c_3=\frac{1}{4},
\end{gathered}
\end{equation}
and
\begin{equation}
\label{EQ:j_choice}
j_1<\cdots<j_r\colon
\text{all integers in
$\left(2\frac{\log Q^{\frac{2}{3}}}{\log P},4\frac{\log Q^{\frac{2}{3}}}{\log P}\right]$}
\end{equation}
so that
\[
r
=\left[4\frac{\log Q^{\frac{2}{3}}}{\log P}\right]-\left[2\frac{\log Q^{\frac{2}{3}}}{\log P}\right].
\]
With this choice, the condition \cref{Karatsuba:c}
is obviously satisfied. For the condition \cref{Karatsuba:rj}, the size of $j_1,\ldots,j_r$
clearly satisfies the condition. Also, we have
\[
r
=\left[4\frac{\log Q^{\frac{2}{3}}}{\log P}\right]-\left[2\frac{\log Q^{\frac{2}{3}}}{\log P}\right]
\ge2\frac{\log Q^{\frac{2}{3}}}{\log P}-1\ge\frac{\log Q^{\frac{2}{3}}}{\log P}\ge1
\]
so that $j_1,\ldots,j_r$ is not an empty sequence and
\[
c_0k\le\frac{2}{3}
\frac{\log Q}{\log P}\le r\le 4\frac{\log Q^{\frac{2}{3}}}{\log P}\le k.
\]
This assures the condition \cref{Karatsuba:rj}.
The remaining conditions for \cref{Lem:Karatsuba} are (A) and (B).
The condition (A) can be checked as
\[
\left|\frac{f^{(k+1)}(x)}{(k+1)!}\right|
=\frac{Q}{x^{k+2}}
\le
\frac{Q}{P^{k+2}}
\le
\frac{Q}{P^{k+1}}
=
P^{-c_1(k+1)}
\frac{Q}{P^{(1-c_1)(k+1)}}
\le
P^{-c_1(k+1)},
\]
where we used
\[
(1-c_1)(k+1)
=\frac{1}{26}\left(\left[26\frac{\log Q}{\log P}\right]+1\right)
\ge\frac{\log Q}{\log P}.
\]
We move on to the condition (B).
For $j\in\{j_1,\ldots,j_r\}$, by definition, we have
\begin{equation}
\label{Walfisz:j_range}
2\frac{\log Q^{\frac{2}{3}}}{\log P}<j\le4\frac{\log Q^{\frac{2}{3}}}{\log P}.
\end{equation}
Therefore, for the upper bound of the condition (B), we find
\[
\left|\frac{f^{(j)}(x)}{j!}\right|
=\frac{Q}{x^{j+1}}
\le\frac{Q}{P^{j}}
\le P^{-c_3j}.
\]
For the lower bound, by using the assumption $P\le Q^{\frac{2}{3}}/2$, we obtain
\[
\left|\frac{f^{(j)}(x)}{j!}\right|
\ge\frac{Q}{(2P)^{j+1}}
\ge\frac{Q^{\frac{1}{3}}}{(2P)^j}.
\]
Since we are also assuming $2^{12}<P$,
\[
\left|\frac{f^{(j)}(x)}{j!}\right|
\ge
\frac{Q^{\frac{1}{3}}}{P^{\frac{13j}{12}}}
=
P^{-c_2j}\frac{Q^{\frac{1}{3}}}{P^{\frac{j}{8}}}
\ge
P^{-c_2j}
\]
by using the inequality \cref{Walfisz:j_range}.

Therefore, we can apply \cref{Lem:Karatsuba} to obtain
\[
\sum_{P<n\le P'}e\left(\frac{Q}{n}\right)
\ll
P^{1-\frac{\gamma}{k^2}}
\]
where the implicit constant and $\gamma$ is now absolute.
Since
\[
k=\left[26\frac{\log Q}{\log P}\right]\le26\frac{\log Q}{\log P},
\]
we obtain
\[
P^{1-\frac{\gamma}{k^2}}
\le
P\exp\left(-\frac{\gamma}{676}\frac{(\log P)^3}{(\log Q)^2}\right).
\]
This completes the proof.
\end{proof}

We next recall Vaughan's identity,
the main ingredient of Liu's improvement.
\begin{lemma}[Vaughan's identity]
\label{Lem:Vaughan}
For a real number $z\ge2$ and any integer $n>z$,
\[
\Lambda(n)=a_1(n)-a_2(n)+a_3(n),
\]
where
\[
a_1(n)
=\sum_{\substack{uv=n\\u\le z}}\mu(u)\log v,\quad
a_2(n)
=
\sum_{\substack{uv=n\\u\le z^2}}
\left(\sum_{\substack{dm=u\\d,m\le z}}\mu(d)\Lambda(m)\right),
\]
\[
a_3(n)
=
\sum_{\substack{uv=n\\u,v>z}}
\left(\sum_{\substack{dm=u\\d>z}}\mu(d)\right)\Lambda(v).
\]
\end{lemma}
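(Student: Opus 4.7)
The plan is to prove Vaughan's identity by elementary Dirichlet convolution manipulations. Let $\mathbf{1}$ denote the arithmetic function constantly equal to $1$, let $L(n)=\log n$, and split $\mu=\mu_{\leq z}+\mu_{>z}$ and $\Lambda=\Lambda_{\leq z}+\Lambda_{>z}$ according to whether the argument is $\leq z$ or $>z$. Starting from the classical identities $\Lambda=\mu\ast L$ and $L=\Lambda\ast \mathbf{1}$, I would substitute the second into the first and expand using the splittings to obtain
$$
\Lambda \;=\; \mu_{\leq z}\ast L \;+\; \mu_{>z}\ast\Lambda_{\leq z}\ast\mathbf{1} \;+\; \mu_{>z}\ast\Lambda_{>z}\ast\mathbf{1}.
$$
The first summand is exactly $a_1$ by direct comparison with the definition.

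For the middle summand, I would use
$
\mu_{>z}\ast\Lambda_{\leq z}\ast\mathbf{1} \;=\; \mu\ast\Lambda_{\leq z}\ast\mathbf{1} \;-\; \mu_{\leq z}\ast\Lambda_{\leq z}\ast\mathbf{1}.
$
By associativity of Dirichlet convolution and $\mu\ast\mathbf{1}=\delta$, the first piece collapses to $\Lambda_{\leq z}$, while the second matches the definition of $a_2$ once one observes that $\mu_{\leq z}\ast\Lambda_{\leq z}$ is automatically supported on integers $\leq z^2$, making the range restriction $u\leq z^2$ in the definition of $a_2$ vacuous. For the third summand, commutativity of convolution rewrites it as $\mu_{>z}\ast\mathbf{1}\ast\Lambda_{>z}$, giving
$$
\bigl(\mu_{>z}\ast\Lambda_{>z}\ast\mathbf{1}\bigr)(n) \;=\; \sum_{\substack{dmv=n\\ d>z,\; v>z}}\mu(d)\Lambda(v);
$$
packaging $u=dm$ and noting that $d>z$ forces $u>z$ automatically, this exactly reproduces $a_3(n)$.

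Combining the three pieces yields $\Lambda=a_1+(\Lambda_{\leq z}-a_2)+a_3$, so $\Lambda-\Lambda_{\leq z}=a_1-a_2+a_3$. Restricting to $n>z$, where $\Lambda_{\leq z}(n)=0$, gives the claim. The only step that requires any real care is the bookkeeping of the support conditions in the definitions of $a_2$ and $a_3$ after reordering the convolutions; everything else is an immediate application of associativity and commutativity of Dirichlet convolution together with $\mu\ast\mathbf{1}=\delta$.
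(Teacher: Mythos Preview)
Your proof is correct and is precisely the standard derivation of Vaughan's identity via Dirichlet convolutions; the paper itself does not write out a proof but simply cites \cite[p.~344, Section~13.4]{Iwaniec_Kowalski}, where essentially the same argument appears. The one point worth stating slightly more carefully is the $a_3$ step: your claim that ``$d>z$ forces $u>z$'' is really the observation that when $u\le z$ the inner sum $\sum_{dm=u,\,d>z}\mu(d)$ is empty, so the outer restriction $u>z$ in the definition of $a_3$ is redundant---which is exactly what you mean, but phrasing it that way removes any ambiguity.
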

\begin{proof}
See \cite[p. 344, Section 13.4]{Iwaniec_Kowalski}.
\end{proof}

After applying Vaughan's identity, the exponential sum is decomposed
into Type~I and Type~II sums as usual. 
Therefore, we next prove the Type~II sum estimate
with exponential sum estimate of the Vinogradov type.

\begin{lemma}[Type II sum estimate]
\label{Lem:TypeII}
For any real number $A\ge1$, there exists a real number $B=B(A)\ge1$
such that for any sequences of complex numbers
\[
\mathcal{A}=(\alpha_u)_{u=1}^{\infty},\quad
\mathcal{B}=(\beta_v)_{v=1}^{\infty}
\]
and any real numbers $P,P',U,U',V,V',Q\ge4$ with
\[
P<P'\le 2P,\quad
U<U'\le 2U,\quad
V<V'\le 2V',
\]
we have
\[
\sum_{\substack{P<uv\le P'\\U<u\le U'\\V<v\le V'}}
\alpha_u\beta_ve\left(\frac{Q}{uv}\right)
\ll
\left(P^{\frac{1}{2}}(\log Q)^{-A}+PQ^{-\frac{1}{2}}\right)
\|\mathcal{A}\|\|\mathcal{B}\|(\log Q)^{\frac{1}{2}}
\]
provided
\begin{equation}
\label{TypeII:UV_range}
U,V\ge\exp(B(\log Q)^{\frac{2}{3}}(\log\log Q)^{\frac{1}{3}}),
\end{equation}
where
\[
\|\mathcal{A}\|^2=\sum_{U<u\le U'}|\alpha_u|^2,\quad
\|\mathcal{B}\|^2=\sum_{V<v\le V'}|\beta_v|^2
\]
and the implicit constant depends only on $A$.
\end{lemma}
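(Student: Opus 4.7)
My plan is the standard Cauchy--Schwarz dualization combined with the exponential sum estimate of \cref{Lem:Walfisz}. First, by the symmetry of the statement under $(u, \alpha_u, U) \leftrightarrow (v, \beta_v, V)$, I may assume $U \le V$, so that $U^2 \le UV \ll P$. Moreover, since the trivial bound gives $|S| \le \sqrt{UV}\,\|\mathcal{A}\|\|\mathcal{B}\|$, the claim is already trivial whenever $PQ^{-1/2} \gtrsim \sqrt{P}$, so I may also assume $P \le Q$ and in particular $\log V \ll \log Q$.

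Applying Cauchy--Schwarz in the $u$-variable gives
\[
|S|^2 \le \|\mathcal{A}\|^2 \sum_{U<u\le U'} \left|\sum_{\substack{V<v\le V'\\P<uv\le P'}} \beta_v\, e\!\left(\frac{Q}{uv}\right)\right|^2.
\]
Expanding the square and swapping summations, the inner expression equals
\[
\sum_{v_1,v_2} \beta_{v_1}\overline{\beta_{v_2}} \sum_{u\in J(v_1,v_2)} e\!\left(\frac{Q(v_2-v_1)}{uv_1v_2}\right),
\]
where $J(v_1,v_2)$ is a subinterval of $(U,U']$. The diagonal $v_1=v_2$ has inner length $\le (P'-P)/v \ll U$ and contributes $O(U\|\mathcal{B}\|^2)$; after taking square roots and using $U \ll \sqrt{P}$ together with the lower bound \cref{TypeII:UV_range}, this is absorbed into the claimed bound.

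For the off-diagonal $v_1\neq v_2$, I apply \cref{Lem:Walfisz} to the inner sum with $Q' = Q|v_2-v_1|/(v_1v_2)$ (the degenerate range $Q'<4$ is covered by the trivial bound $U \le U^2/Q'$), which yields
\[
\sum_{u\in J(v_1,v_2)} e\!\left(\frac{Q(v_2-v_1)}{uv_1v_2}\right) \ll U\exp\!\left(-\gamma\frac{(\log U)^3}{(\log Q)^2}\right) + \frac{U^2 v_1v_2}{Q|v_2-v_1|}.
\]
Summing the first term by Cauchy--Schwarz $(\sum_v |\beta_v|)^2 \le V\|\mathcal{B}\|^2$ contributes $UV\|\mathcal{B}\|^2 (\log Q)^{-\gamma B^3}$ under \cref{TypeII:UV_range}, and I choose $B=B(A)$ so that $\gamma B^3 \ge 2A+1$. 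Summing the second term using $v_1v_2\ll V^2$ together with the AM-GM bound
\[
\sum_{\substack{v_1,v_2\\v_1\neq v_2}} \frac{|\beta_{v_1}||\beta_{v_2}|}{|v_1-v_2|} \ll \|\mathcal{B}\|^2 \log V \ll \|\mathcal{B}\|^2 \log Q
\]
gives $\ll P^2(\log Q)/Q\cdot\|\mathcal{B}\|^2$. Combining the three contributions and taking square roots yields $|S| \ll \|\mathcal{A}\|\|\mathcal{B}\|(P^{1/2}(\log Q)^{-A-1/2} + PQ^{-1/2}(\log Q)^{1/2})$, which is stronger than the claim.

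The main technical obstacle is to balance the exponential saving of \cref{Lem:Walfisz} against the combinatorial losses in the Cauchy--Schwarz step: to convert the saving $\exp(-\gamma(\log U)^3/(\log Q)^2)$ into a power of $\log Q$ large enough to absorb both the factor $V$ coming from $(\sum|\beta_v|)^2$ and the square-root loss, one needs precisely $(\log U)^3 \gtrsim (\log Q)^2 \log\log Q$, which is the origin of the cube-root exponent in \cref{TypeII:UV_range}. A subsidiary point is that \cref{Lem:Walfisz} is stated on full intervals $(P,P']$ while $J(v_1,v_2)$ is only a subinterval of $(U,U']$; this is harmless because \cref{Lem:Karatsuba} (on which \cref{Lem:Walfisz} rests) already applies to any subinterval $P_1 \le P$, and similarly for \cref{Lem:Kusmin_Landau}.
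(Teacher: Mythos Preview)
Your proof is correct and follows essentially the same Cauchy--Schwarz/Walfisz scheme as the paper. The only cosmetic difference is the symmetry choice: the paper reduces to $U\ge V$ (so the inner exponential sum runs over the \emph{longer} variable), which makes $Q'\ge Q/(8P)\ge 4$ automatic and bounds the diagonal via $U\ll P/V$ together with the lower bound \cref{TypeII:UV_range} on $V$; your choice $U\le V$ instead forces you to dispose of the degenerate range $Q'<4$ by hand and to bound the diagonal via $U\ll\sqrt{P}$, but both routes lead to the same estimate.
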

\begin{proof}
We let $0<\gamma\le1$ be the constant in \cref{Lem:Walfisz}
and take $B=(2A/\gamma)^{\frac{1}{3}}$.
We then employ some trivial reductions.
By symmetry between $u$ and $v$,
it suffices to consider the case $U\ge V$.
If the sum in the assertion is non-empty,
then we have $P<U'V'$ and $UV\le P'$. Therefore, we may assume
\begin{equation}
\label{TypeII:UV_size}
P/4\le UV\le 2P.
\end{equation}
We may assume $Q$ is larger than some constant depending only on $A$
since otherwise the Cauchy--Schwarz inequality gives
\begin{equation}
\label{TypeII:trivial}
\sum_{\substack{P<uv\le P'\\U<u\le U'\\V<v\le V'}}
\alpha_u\beta_ve\left(\frac{Q}{uv}\right)
\ll
\|\mathcal{A}\|\|\mathcal{B}\|(UV)^{\frac{1}{2}}
\ll
\|\mathcal{A}\|\|\mathcal{B}\|P^{\frac{1}{2}}
\end{equation}
so that the assertion follows.
Similarly, we may assume $P\le Q/32$ since otherwise
\[
PQ^{-\frac{1}{2}}\gg P^{\frac{1}{2}}
\]
so that \cref{TypeII:trivial} again gives the assertion.

After the above reductions, we use the Cauchy--Schwarz inequality to obtain
\begin{equation}
\label{TypeII:CS}
\Bigg|\sum_{\substack{P<uv\le P'\\U<u\le U'\\V<v\le V'}}
\alpha_u\beta_ve\left(\frac{Q}{uv}\right)\Bigg|^2
\le
\|\mathcal{A}\|^2
\sum_{U<u\le U'}\Bigg|\sum_{\substack{P/u<v\le P'/u\\V<v\le V'}}\beta_v
e\left(\frac{Q}{uv}\right)\Bigg|^2.
\end{equation}
We then expand the square in the latter factor as
\begin{equation}
\label{TypeII:open_sq}
\begin{aligned}
&\sum_{U<u\le U'}\Bigg|\sum_{\substack{P/u<v\le P'/u\\V<v\le V'}}\beta_v
e\left(\frac{Q}{uv}\right)\Bigg|^2\\
&\le
\sum_{V<v_1,v_2\le V'}|\beta_{v_1}\beta_{v_2}|
\left|\sum_{U(v_1,v_2)< u\le U'(v_1,v_2)}
e\left(\frac{Q}{u}\left(\frac{1}{v_1}-\frac{1}{v_2}\right)\right)\right|\\
&=
\sum_{v_1=v_2}+2\sum_{v_1<v_2}
=
S_{1}+2S_{2},\quad\text{say},
\end{aligned}
\end{equation}
where
\[
U(v_1,v_2)=\max(U,P/v_1,P/v_2),\quad
U'(v_1,v_2)=\min(U',P'/v_1,P'/v_2).
\]
Note that obviously
\[
U\le U(v_1,v_2),\quad
U'(v_1,v_2)\le U'\le 2U\le 2U(v_1,v_2).
\]
for every $v_1,v_2$. Thus for $S_{1}$, we have
\begin{equation}
\label{TypeII:diagonal}
S_{1}
\ll U\|\mathcal{B}\|^2
\ll PV^{-1}\|\mathcal{B}\|^2
\ll P(\log Q)^{-2A}\|\mathcal{B}\|^2
\end{equation}
by \cref{TypeII:UV_range} and \cref{TypeII:UV_size}.
We apply \cref{Lem:Walfisz} for $S_2$.
If $V<v_1<v_2\le V'$, then
\[
Q\left(\frac{1}{v_1}-\frac{1}{v_2}\right)
=
\frac{Q(v_2-v_1)}{v_1v_2}
\ge \frac{Q}{4V^2}
\ge \frac{Q}{4UV}
\ge \frac{Q}{8P}
\ge 4
\]
since $U\ge V$, $UV\le P'$ and $P\le Q/32$.
Thus we may apply \cref{Lem:Walfisz} to the sum
\[
\sum_{U(v_1,v_2)<u\le U'(v_1,v_2)}
e\left(\frac{Q}{u}\left(\frac{1}{v_1}-\frac{1}{v_2}\right)\right).
\]
By using
\[
U\le U(v_1,v_2)\le 2PV^{-1},\quad
Q\left(\frac{1}{v_1}-\frac{1}{v_2}\right)\le Q
\]
and
\[
U(v_1,v_2)^2\left(Q\left(\frac{1}{v_1}-\frac{1}{v_2}\right)\right)^{-1}
\ll
(PV^{-1})^2Q^{-1}\frac{v_1v_2}{|v_1-v_2|}
\ll
\frac{P^2Q^{-1}}{|v_1-v_2|}
\]
we obtain
\begin{align}
\sum_{U(v_1,v_2)<u\le U'(v_1,v_2)}
e\left(\frac{Q}{u}\left(\frac{1}{v_1}-\frac{1}{v_2}\right)\right)
&\ll
PV^{-1}\exp\left(-\gamma\frac{(\log U)^3}{(\log Q)^2}\right)
+
\frac{P^2Q^{-1}}{|v_1-v_2|}\\
&\ll
PV^{-1}(\log Q)^{-2A}+\frac{P^2Q^{-1}}{|v_1-v_2|}
\end{align}
by \cref{TypeII:UV_range}
since we took the constant $B$ by $B=(2A/\gamma)^{\frac{1}{3}}$.
By using
\begin{align}
\sum_{V<v_1,v_2\le V'}|\beta_{v_1}\beta_{v_2}|
&\le
V\sum_{V<v\le V'}|\beta_{v}|^2
=
V\|\mathcal{B}\|^2
\end{align}
and
\[
\sum_{V<v_1\neq v_2\le V'}\frac{|\beta_{v_1}\beta_{v_2}|}{|v_1-v_2|}
\ll
\sum_{V<v_1\le V'}|\beta_{v_1}|^2
\sum_{\substack{V<v_2\le V'\\v_1\neq v_2}}\frac{1}{|v_1-v_2|}
\ll
(\log Q)\|\mathcal{B}\|^2,
\]
where we used the symmetry between $v_1$ and $v_2$,
we arrive at
\begin{equation}
\label{TypeII:non_diagonal}
\begin{aligned}
S_2
&\ll
\left(P(\log Q)^{-2A}
+
P^2Q^{-1}(\log Q)\right)\|\mathcal{B}\|^2\\
&\ll
\left(P(\log Q)^{-2A}
+
P^2Q^{-1}\right)\|\mathcal{B}\|^2(\log Q).
\end{aligned}
\end{equation}
By combining \cref{TypeII:diagonal} and \cref{TypeII:non_diagonal}
and inserting into \cref{TypeII:open_sq},
\[
\sum_{U<u\le U'}\Bigg|\sum_{\substack{P/u<v\le P/u'\\V<v\le V'}}\beta_v
e\left(\frac{Q}{uv}\right)\Bigg|^2
\ll
\left(P(\log Q)^{-2A}+P^2Q^{-1}\right)\|\mathcal{B}\|^2(\log Q).
\]
Substituting this estimate into \cref{TypeII:CS},
we obtain the lemma.
\end{proof}

We now consider the exponential sum over primes.

\begin{lemma}
\label{Lem:Liu}
For any real number $A\ge1$, there exists a real number $B=B(A)\ge1$
such that for any real numbers $P,P',Q\ge4$ with $P<P'\le 2P$,
\[
\sum_{P<n\le P'}\Lambda(n)e\left(\frac{Q}{n}\right)
\ll
\left(P(\log Q)^{-A}+P^{\frac{3}{2}}Q^{-\frac{1}{2}}\right)(\log Q)^5
\]
provided
\begin{equation}
\label{Liu:UV_range} 
P\ge\exp(B(\log Q)^{\frac{2}{3}}(\log\log Q)^{\frac{1}{3}}),
\end{equation}
where the implicit constant depends only on $A$.
\end{lemma}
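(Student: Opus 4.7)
The plan is to apply Vaughan's identity (\cref{Lem:Vaughan}) to decompose $\Lambda(n)$, then estimate the two Type~I sums via \cref{Lem:Walfisz} and the Type~II sum via \cref{Lem:TypeII}. First I would make two harmless reductions. If $P \ge Q$, then $P^{3/2}Q^{-1/2} \ge P \gg \sum_{P < n \le P'} \Lambda(n)$, so the desired bound is trivial; hence we may assume $P \le Q$. Let $\gamma$ be the constant in \cref{Lem:Walfisz}, let $B_1 = B_1(A+10)$ be the constant supplied by \cref{Lem:TypeII}, and set
\[
z = \exp\bigl(B_1 (\log Q)^{2/3}(\log\log Q)^{1/3}\bigr).
\]
The constant $B = B(A)$ in the statement will be chosen to satisfy $B \ge 10 B_1$ and $\gamma B^3 \ge 10^3 A$, so in particular $z^2 \le P^{1/5}$. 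Since $n > P \ge z$, \cref{Lem:Vaughan} gives $\Lambda(n) = a_1(n) - a_2(n) + a_3(n)$ for every $n$ in our range.

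For the Type~I pieces coming from $a_1$ and $a_2$, I would swap the order of summation, and for $a_1$ apply partial summation to remove the factor $\log v$. Each piece reduces to estimating, for $u$ in a range of size at most $z^2$, the exponential sum
\[
\max_W \,\bigg|\sum_{V_1 < v \le W} e\!\left(\frac{Q/u}{v}\right)\bigg|,
\]
to which \cref{Lem:Walfisz} (applied with $(P,Q) \to (P/u, Q/u)$) yields the bound $(P/u)\exp(-\gamma(\log(P/u))^3/(\log Q)^2) + P^2/(Qu)$. Our choice $B \ge 10 B_1$ gives $\log(P/u) \ge (4/5)\log P$ uniformly in $u \le z^2$, and then $\gamma B^3 \ge 10^3 A$ forces the exponential to be $\le (\log Q)^{-A-3}$. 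Summing over $u$ (with coefficient bounds $|\mu(u)| \le 1$ in the $a_1$ piece and $|c(u)| \le \log u$ for the Vaughan coefficient $c(u) = \sum_{dm=u,\,d,m\le z}\mu(d)\Lambda(m)$ in the $a_2$ piece, plus an extra $\log P$ from partial summation in $a_1$), and using $P \le Q$ to replace $P^2/Q$ by $P^{3/2}Q^{-1/2}$, I obtain $\ll (P(\log Q)^{-A} + P^{3/2}Q^{-1/2})(\log Q)^{O(1)}$ from both pieces.

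For the Type~II piece from $a_3$, I would decompose dyadically in both $u$ and $v$ to produce $O((\log Q)^2)$ bilinear sums of the form treated in \cref{Lem:TypeII}, with $\alpha_u = \sum_{d \mid u,\, d > z} \mu(d)$ (so $|\alpha_u| \le \tau(u)$), $\beta_v = \Lambda(v)$, and dyadic ranges $U, V \in [z, 2P/z]$. The lower bound $U, V \ge z$ matches the hypothesis \cref{TypeII:UV_range}, so \cref{Lem:TypeII} applies with parameter $A + 10$; the standard estimates $\|\mathcal{A}\|^2 \ll U(\log U)^3$ and $\|\mathcal{B}\|^2 \ll V \log V$ give $\|\mathcal{A}\|\|\mathcal{B}\| \ll P^{1/2}(\log Q)^2$, so each dyadic piece is $\ll (P(\log Q)^{-A-8} + P^{3/2}Q^{-1/2})(\log Q)^{5/2}$. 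Summing over dyadic pieces and combining with the Type~I bounds yields the stated estimate. The main obstacle is calibrating $z$ correctly: it must be large enough for \cref{Lem:TypeII} to apply simultaneously on both dyadic factors of the Type~II sum, yet small enough (with $\log z^2$ a small fraction of $\log P$) for the Vinogradov decay in \cref{Lem:Walfisz} to beat the target $(\log Q)^{-A}$ on the Type~I side. Both constraints are reconciled precisely by the hypothesis $P \ge \exp(B(\log Q)^{2/3}(\log\log Q)^{1/3})$ with $B$ a large enough multiple of $\max(B_1,(A/\gamma)^{1/3})$, which is the reason for the specific shape of the lower bound on $P$.
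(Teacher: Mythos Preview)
Your proposal is correct and follows essentially the same approach as the paper: Vaughan's identity, Type~I pieces via \cref{Lem:Walfisz}, and the Type~II piece via \cref{Lem:TypeII} after dyadic decomposition. The only difference is cosmetic---the paper takes the Vaughan parameter $z = P^{1/3}$ (so the Type~I ranges are $u \le P^{1/3}$, $u \le P^{2/3}$ and the Type~II range has $U,V \ge P^{1/3}$), whereas you take $z$ equal to the Type~II threshold itself; both choices satisfy the two competing constraints you identify at the end, and the resulting log-power bookkeeping is the same.
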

\begin{proof}
We let $0<\gamma\le1$ be the constant in \cref{Lem:Walfisz}
and $B_1(A)$ be the constant $B(A)$ in \cref{Lem:TypeII}
and take $B=\max(3(A/\gamma)^{\frac{1}{3}},3B_1(A))$ for the current proof.
We may assume that $Q$ is larger than some constant
depending only on $A$ since otherwise the trivial estimate
\[
\sum_{P<n\le P'}\Lambda(n)e\left(\frac{Q}{n}\right)
\ll
P
\]
is enough.
Also, if $P>Q$, then we have $P^{\frac{3}{2}}Q^{-\frac{1}{2}}\gg P$
so the assertion again trivially follows. Hence we may further assume $P\le Q$.

We use \cref{Lem:Vaughan} with $z=P^{\frac{1}{3}}$.
This gives a decomposition
\begin{equation}
\label{Liu:Vaughan}
\sum_{P<n\le P'}\Lambda(n)e\left(\frac{Q}{n}\right)
=
S_1-S_2+S_3,
\end{equation}
where
\[
S_1=
\sum_{\substack{P<uv\le P'\\u\le P^{\frac{1}{3}}}}
\mu(u)(\log v)e\left(\frac{Q}{uv}\right),\quad
S_2=
\sum_{\substack{P<uv\le P'\\u\le P^{\frac{2}{3}}}}c_2(u)e\left(\frac{Q}{uv}\right),
\]
\[
S_3=
\sum_{\substack{P<uv\le P'\\u,v>P^{\frac{1}{3}}}}
c_3(u)\Lambda(v)e\left(\frac{Q}{uv}\right),
\]
where the coefficients $c_2(u)$ and $c_3(u)$ are given by
\[
c_2(u)=\sum_{\substack{dm=u\\d,m\le P^{\frac{1}{3}}}}\mu(d)\Lambda(m),\quad
c_3(u)=\sum_{\substack{dm=u\\d>P^{\frac{1}{3}}}}\mu(d).
\]
Note that
\begin{equation}
\label{Liu:coefficient}
|c_2(u)|\le\sum_{dm=u}\Lambda(m)=\log u,\quad
|c_3(u)|\le\tau(u).
\end{equation}

For the sum $S_1$, we start with
\begin{equation}
\label{Liu:S1}
S_1
=
\sum_{u\le P^{\frac{1}{3}}}\mu(u)\sum_{P/u<v\le P'/u}(\log v)e\left(\frac{Q/u}{v}\right).
\end{equation}
We apply \cref{Lem:Walfisz} to the inner sum.
Note that
\[
4\le Q^{\frac{2}{3}}\le Q/u\le Q,\quad
P/u\ge P^{\frac{2}{3}}\ge P^{\frac{1}{3}}
\]
for this inner sum. Thus, for $P/u<x\le 2P/u$, we obtain
\begin{equation}
\label{Liu:TypeI_inner}
\begin{aligned}
\sum_{P/u<v\le x}e\left(\frac{Q/u}{v}\right)
&\ll
\left(P\exp\left(-\frac{\gamma}{27}\frac{(\log P)^3}{(\log Q)^2}\right)
+P^2Q^{-1}\right)u^{-1}\\
&\ll
\left(P(\log Q)^{-A}+P^2Q^{-1}\right)u^{-1}
\end{aligned}
\end{equation}
by \cref{Liu:UV_range}  since $B\ge3(A/\gamma)^{\frac{1}{3}}$.
By partial summation,
\begin{align}
\sum_{P/u<v\le P'/u}(\log v)e\left(\frac{Q/u}{v}\right)
&\ll
(\log Q)\sup_{P/u<x\le P'/u}\left|\sum_{P/u<v\le x}e\left(\frac{Q/u}{v}\right)\right|\\
&\ll
\left(P(\log Q)^{-A}+P^2Q^{-1}\right)u^{-1}(\log Q)
\end{align}
We substitute this estimate into \eqref{Liu:S1}.
Then by using $PQ^{-1}\le P^{\frac{1}{2}}Q^{-\frac{1}{2}}$ and
\[
\sum_{u\le P^{\frac{1}{3}}}\frac{1}{u}\ll\log P\ll\log Q,
\]
we arrive at the desired estimate
\begin{equation}
\label{Liu:S1_estimate}
S_1
\ll
\left(P(\log Q)^{-A}+P^{\frac{3}{2}}Q^{-\frac{1}{2}}\right)(\log Q)^{2}
\end{equation}
for $S_1$.

The sum $S_2$ is treated similarly to $S_1$. We start with
\begin{equation}
\label{EQ:Type_I_Liu_other}
S_2
=
\sum_{u\le P^{\frac{2}{3}}}c_2(u)\sum_{P/u<v\le P'/u}e\left(\frac{Q/u}{v}\right).
\end{equation}
For the inner sum, we again have
\[
4\le Q/u\le Q,\quad
P/u\ge P^{\frac{1}{3}}
\]
so that the estimate \cref{Liu:TypeI_inner} is available.
Hence by using \cref{Liu:coefficient} and
\[
\sum_{u\le P^{\frac{2}{3}}}\frac{\log u}{u}\ll(\log P)^2\ll(\log Q)^2,
\]
we arrive at the desired estimate
\begin{equation}
\label{Liu:S2_estimate}
S_2
\ll
\left(P(\log Q)^{-A}+P^{\frac{3}{2}}Q^{-\frac{1}{2}}\right)(\log Q)^2
\end{equation}
for $S_2$.

For the sum $S_3$, we can employ dyadic subdivision to obtain
\begin{equation}
\label{Liu:S3_dissect}
S_3\ll (\log Q)^2\sup|S_3(U,U',V,V')|,
\end{equation}
where the supremum is taken over real numbers $U,U',V,V'$
with the conditions
\[
U<U'\le2U,\quad
V<V'\le2V,\quad
U,V\ge P^{\frac{1}{3}},\quad
UV\le P'
\]
and $S_3(U,U',V,V')$ is defined by
\begin{equation}
S_3(U,U',V,V')
=
\sum_{\substack{P<uv\le P'\\U<u\le U'\\V<v\le V'}}
c_3(u)\Lambda(v)e\left(\frac{Q}{uv}\right).
\end{equation}
We apply \cref{Lem:TypeII} to this double sum.
Since our choice of $B$ gives $B\ge3B_1(A)$,
by \cref{Liu:UV_range}, we find that
\[
U,V\ge P^{\frac{1}{3}}\ge\exp(B_1(A)(\log Q)^{\frac{2}{3}}(\log\log Q)^{\frac{1}{3}}).
\]
Also, we have
\begin{gather}
\sum_{U<u\le U'}|c_3(u)|^2
\le\sum_{U<u\le U'}\tau(u)^2
\ll U(\log U)^3,\\
\sum_{V<v\le V'}\Lambda(v)^2
\le(\log V)\sum_{V<v\le V'}\Lambda(v)
\ll V\log V.
\end{gather}
Thus \cref{Lem:TypeII} and $UV\le P'$ imply
\begin{align}
S_3(U,U',V,V')
&\ll
\left(P^{\frac{1}{2}}(\log Q)^{-A}+PQ^{-\frac{1}{2}}\right)
(UV)^{\frac{1}{2}}(\log Q)^{\frac{5}{2}}\\
&\ll
\left(P(\log Q)^{-A}+P^{\frac{3}{2}}Q^{-\frac{1}{2}}\right)(\log Q)^{3}.
\end{align}
By returning to \eqref{Liu:S3_dissect}, we find that
\[
S_3\ll
\left(P(\log Q)^{-A}+P^{\frac{3}{2}}Q^{-\frac{1}{2}}\right)(\log Q)^5.
\]
Combining this with \cref{Liu:Vaughan}, \cref{Liu:S1_estimate}
and \cref{Liu:S2_estimate} we arrive at the desired estimate.
\end{proof}

\begin{lemma}
\label{Lem:prime}
For any real number $A\ge1$, there exists a real number $B=B(A)\ge1$
such that for any real numbers $P,P',Q\ge4$ with $P<P'\le 2P$,
\[
\sum_{P<p\le P'}e\left(\frac{Q}{p}\right)
\ll
\left(P(\log Q)^{-A}+P^{\frac{3}{2}}Q^{-\frac{1}{2}}\right)(\log Q)^{5}
\]
provided
\begin{equation}
\label{prime:UV_range}
P\ge\exp(B(\log Q)^{\frac{2}{3}}(\log\log Q)^{\frac{1}{3}}),
\end{equation}
where the implicit constant depends only on $A$.
\end{lemma}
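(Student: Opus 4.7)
The plan is to derive this from \cref{Lem:Liu} via two standard reductions: first strip the contribution of prime powers from the von Mangoldt weighted sum, and then remove the $\log p$ weight by partial summation.

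For the first step, since $\Lambda(n)=\log p$ when $n=p^k$ with $k\ge1$ and vanishes otherwise, I would write
\[
\sum_{P<n\le P'}\Lambda(n)e\!\left(\frac{Q}{n}\right)
=\sum_{P<p\le P'}(\log p)e\!\left(\frac{Q}{p}\right)+R,
\]
where $R$ collects the contribution of proper prime powers $p^k\le P'\le2P$ with $k\ge2$. By the Chebyshev bound $\sum_{p\le y}\log p\ll y$, one gets trivially $|R|\le\sum_{k\ge2}\sum_{p\le(2P)^{1/k}}\log p\ll P^{1/2}$.

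For the second step, set $T(x):=\sum_{P<p\le x}(\log p)e(Q/p)$ and apply Abel summation with weight $1/\log t$ to obtain
\[
\sum_{P<p\le P'}e\!\left(\frac{Q}{p}\right)
=\frac{T(P')}{\log P'}
+\int_{P}^{P'}\frac{T(t)}{t(\log t)^2}\,dt
\ll\frac{1}{\log P}\max_{P<x\le P'}|T(x)|,
\]
since $\int_P^{P'}dt/(t(\log t)^2)\le1/\log P$. Combining the two steps gives $|T(x)|$ controlled by the von Mangoldt sum over $(P,x]$ plus $O(P^{1/2})$, and since $x\in(P,2P]$ the interval $(P,x]$ is of the form required by \cref{Lem:Liu}. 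Choosing $B\ge B_1(A)$, where $B_1(A)$ is the constant $B(A)$ furnished by \cref{Lem:Liu}, we may apply that lemma with the same $A$ to estimate $|T(x)|$ by $(P(\log Q)^{-A}+P^{3/2}Q^{-1/2})(\log Q)^5$. Division by $\log P$ only improves the bound.

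The only technical point is to absorb the leftover $P^{1/2}/\log P$ from the prime-power error into the target. Here one uses the lower bound \cref{prime:UV_range}, which yields $\log P\ge B(\log Q)^{2/3}(\log\log Q)^{1/3}$, so that $P^{1/2}$ eventually dominates any fixed power of $\log Q$; taking $B$ sufficiently large in terms of $A$ makes $P^{1/2}\le P(\log Q)^{-A}(\log Q)^5\log P$, which finishes the proof. The main obstacle is nothing substantial beyond bookkeeping the choice of $B$ so that both \cref{Lem:Liu} is applicable and the error from prime powers is swallowed by the two terms in the target bound.
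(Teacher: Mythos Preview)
Your proof is correct and follows essentially the same approach as the paper: strip off the contribution of proper prime powers (bounded by $O(P^{1/2})$, which is absorbed using \cref{prime:UV_range}), apply \cref{Lem:Liu} to the resulting $\log p$-weighted sum over $(P,x]$, and remove the logarithmic weight by partial summation. The only cosmetic difference is that the paper first bounds $\sum_{P<p\le P'}(\log p)e(Q/p)$ and then applies partial summation, whereas you interleave the two steps; also note that enlarging $B$ is not strictly necessary to absorb $P^{1/2}$, since already $B\ge1$ forces $P^{1/2}$ to dominate any fixed power of $\log Q$ once $Q$ is large (and small $Q$ is handled by the implicit constant).
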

\begin{proof}
We may assume $Q$ is larger than some absolute constant depending only on $A$.
Let us take the same constant $B=B(A)\ge1$ as in \cref{Lem:Liu}.
We replace $\Lambda(n)$ by $\log p$.
This replacement produces an error
\begin{equation}
\label{prime:prime_power_error}
\le
\sum_{2\le\nu\le2\log P}\sum_{p\le (2P)^{\frac{1}{\nu}}}\log p
\ll
P^{\frac{1}{2}}(\log P)^2.
\end{equation}
By \cref{prime:UV_range} and $B\ge1$, we find that
\[
\log P\ge B(\log Q)^{\frac{2}{3}}(\log\log Q)^{\frac{1}{3}}\ge (\log Q)^{\frac{2}{3}}
\]
so that the error \cref{prime:prime_power_error} is bounded as
\[
P(\log Q)^{-A}\ge P(\log P)^{-\frac{3A}{2}}\gg P^{\frac{1}{2}}(\log P).
\]
Therefore, \cref{Lem:Liu} implies
\[
\sum_{P<p\le P'}(\log p)e\left(\frac{Q}{p}\right)
\ll
\left(P(\log Q)^{-A}+P^{\frac{3}{2}}Q^{-\frac{1}{2}}\right)(\log Q)^{5}
\]
provided \cref{prime:UV_range}.
By using partial summation, we obtain the lemma.
\end{proof}

\section{Exponential sums with multiplicative functions}
\label{Section:v}
Our next task is to estimate the exponential sum
\[
\sum_{P<n\le P'}v(n)e\left(\frac{Q}{n}\right),\quad
P<P'\le2P
\]
by using the result of \cref{Section:prime},
where $v(n)$ is a multiplicative function satisfying the conditions
\cref{V1}, \cref{V2}, and \cref{V3}.

We first prepare an estimate for the exponential sum over primes
with the coefficient $v(p)$. This is the point where the condition
\cref{V3} is used, which states $v(p)$ has a bounded variation.

\begin{lemma}
\label{Lem:v_prime}
Let $v(n)$ be a complex-valued multiplicative function
satisfying {\upshape\cref{V1}} and {\upshape\cref{V3}}.
For any real number $A\ge1$, there exists a real number $B=B(A)\ge1$
such that for any real numbers $P,P',Q\ge4$ with $P<P'\le 2P$,
\[
\sum_{P<p\le P'}v(p)e\left(\frac{Q}{p}\right)
\ll
\left(P(\log Q)^{-A}+P^{\frac{3}{2}}Q^{-\frac{1}{2}}\right)(\log Q)^{C+5}
\]
provided
\begin{equation}
\label{v_prime:UV_range}
P\ge\exp(B(\log Q)^{\frac{2}{3}}(\log\log Q)^{\frac{1}{3}}),
\end{equation}
where the implicit constant depends only on $A$ and $C$.
\end{lemma}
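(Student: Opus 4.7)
The plan is to separate $v(p)$ from the exponential factor $e(Q/p)$ by Abel summation along the sequence of primes, then invoke the unweighted prime exponential sum bound \cref{Lem:prime} proved in the previous section. The hypotheses \cref{V1} and \cref{V3} are precisely what is needed for this procedure: \cref{V1} controls the boundary term and \cref{V3} controls the variation sum that arises.

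Concretely, I would choose $B=B_1(A)$ where $B_1(A)$ is the constant from \cref{Lem:prime}, and set
\[
S(x)=\sum_{P<p\le x}e\!\left(\frac{Q}{p}\right)\qquad(P<x\le P').
\]
Since \cref{Lem:prime} applies with the same $P$ but $P'$ replaced by any $x\in(P,P']$, we have the uniform bound
\[
\sup_{P<x\le P'}|S(x)|
\ll
\left(P(\log Q)^{-A}+P^{\frac{3}{2}}Q^{-\frac{1}{2}}\right)(\log Q)^{5},
\]
with implicit constant depending on $A$, provided \cref{v_prime:UV_range} holds. Before applying partial summation, I would first dispose of the range $P>Q$ via the trivial estimate $\sum_{P<p\le P'}|v(p)|\ll P$ from \cref{V1}, since in that range $P^{3/2}Q^{-1/2}\ge P$ and the claimed bound is trivial; thus we may assume $P\le Q$, so in particular $\log P'\ll\log Q$.

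Enumerating the primes in $(P,P']$ as $p_{M+1}<\cdots<p_{N}$, Abel summation gives
\[
\sum_{P<p\le P'}v(p)e\!\left(\frac{Q}{p}\right)
=
v(p_{N})S(P')
-\sum_{n=M+1}^{N-1}\bigl(v(p_{n+1})-v(p_{n})\bigr)S(p_{n}).
\]
Taking absolute values and pulling out $\sup|S(x)|$,
\[
\left|\sum_{P<p\le P'}v(p)e\!\left(\frac{Q}{p}\right)\right|
\le
\sup_{P<x\le P'}|S(x)|
\left(|v(p_{N})|+\sum_{n=M+1}^{N-1}|v(p_{n+1})-v(p_{n})|\right).
\]
By \cref{V1} we have $|v(p_{N})|\le C$, and by \cref{V3} (using $\log P'\ll\log Q$),
\[
\sum_{n=M+1}^{N-1}|v(p_{n+1})-v(p_{n})|
\le
\sum_{p_{n}\le P'}|v(p_{n+1})-v(p_{n})|
\le
C(\log P')^{C}
\ll
(\log Q)^{C}.
\]

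Combining the two bounds gives
\[
\sum_{P<p\le P'}v(p)e\!\left(\frac{Q}{p}\right)
\ll
\left(P(\log Q)^{-A}+P^{\frac{3}{2}}Q^{-\frac{1}{2}}\right)(\log Q)^{5}\cdot(\log Q)^{C},
\]
which is exactly the claimed estimate. There is no substantial obstacle here: the lemma is essentially a bookkeeping application of Abel summation together with \cref{Lem:prime}, and the exponent $C+5$ in the conclusion is produced in the most direct way, namely $5$ from \cref{Lem:prime} and $C$ from the variation bound \cref{V3}. The only point requiring mild care is verifying that the reduction to $P\le Q$ is harmless, which it is, as noted above.
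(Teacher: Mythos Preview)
Your proposal is correct and follows essentially the same argument as the paper: reduce to $P\le Q$, apply Abel summation along the primes in $(P,P']$ to separate $v(p)$ from $e(Q/p)$, bound the boundary term by \cref{V1} and the variation sum by \cref{V3}, and estimate $\sup_{P<x\le P'}|S(x)|$ via \cref{Lem:prime}. The only differences are cosmetic (the paper indexes the primes in $(P,P']$ locally as $q_1<\cdots<q_N$ rather than via the global prime count).
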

\begin{proof}
Let $B_1(A)$ be the constant $B(A)$ in \cref{Lem:prime}
and for our current proof, take $B=B_1(A)$.
By \cref{V1}, we may assume $P\le Q$ since otherwise
$P^{\frac{3}{2}}Q^{-\frac{1}{2}}\gg P$.
We may also assume that there exists a prime $p$
such that $P<p\le P'$. Suppose that the prime numbers $p$ with $P<p\le P'$
are given by $q_1<\cdots<q_N$.
Then we have
\begin{equation}
\label{v_prime:indexed}
\sum_{P<p\le P'}v(p)e\left(\frac{Q}{p}\right)
=
\sum_{n=1}^{N}v(q_n)e\left(\frac{Q}{q_n}\right).
\end{equation}
We introduce
\[
S(x)=\sum_{P<p\le x}e\left(\frac{Q}{p}\right),\quad
q_0=1.
\]
Then by applying partial summation to \cref{v_prime:indexed},
\begin{align}
\sum_{P<p\le P'}v(p)e\left(\frac{Q}{p}\right)
&=
\sum_{n=1}^{N}v(q_n)\left(S(q_n)-S(q_{n-1})\right)\\
&=
\sum_{n=1}^{N-1}\left(v(q_n)-v(q_{n+1})\right)S(q_n)
+v(q_{N})S(q_{N}).
\end{align}
By \cref{V1} and \cref{V3}, this can be estimated as
\begin{equation}
\label{v_prime:sum_parts}
\begin{aligned}
&\left|\sum_{P<p\le P'}v(p)e\left(\frac{Q}{p}\right)\right|\\
&\le
\left(\sup_{P<x\le P'}|S(x)|\right)
\left(
\sum_{n=1}^{N-1}\left|v(q_{n+1})-v(q_{n})\right|
+
|v(q_{N})|
\right)\\
&\le
\left(\sup_{P<x\le P'}|S(x)|\right)
\left(
\sum_{p_n\le P'}\left|v(p_{n+1})-v(q_{n})\right|
+
|v(q_{N})|
\right)\\
&\ll
\left(\sup_{P<x\le P'}|S(x)|\right)(\log Q)^C.
\end{aligned}
\end{equation}
By \cref{v_prime:UV_range},
we can now apply \cref{Lem:prime} to the sum $S(x)$ to obtain
\[
\sup_{P<x\le P'}|S(x)|
\ll
\left(P(\log Q)^{-A}+P^{\frac{3}{2}}Q^{-\frac{1}{2}}\right)(\log Q)^{5}
\]
By substituting this estimate
into \cref{v_prime:sum_parts},
we obtain the assertion.
\end{proof}

We next prepare a preliminary estimate.
The proof of the next lemma includes a Vinogradov-type
combinatorial decomposition finer than used by P\'etermann~\cite{Petermann}.

\begin{lemma}
\label{Lem:v_sq_free}
Let $v(n)$ be a complex-valued multiplicative function
satisfying {\upshape\cref{V1}} and {\upshape\cref{V3}}.
For any real number $A\ge1$, there exists a real number $B=B(A)\ge1$
such that for any real numbers $P,P',Q,z\ge4$ with $P<P'\le 2P$
and any positive integer $\nu$,
\begin{equation}
\label{EQ:sum_over_prime_product}
\sum_{\substack{%
P<q\le P'\\
p_{\min}(q)>z\\
q\colon\text{\upshape square-free}\\
\omega(q)=\nu}}
v(q)e\left(\frac{Q}{q}\right)
\ll
C^{\nu}\left(P(\log Q)^{-A}+P^{\frac{3}{2}}Q^{-\frac{1}{2}}+Pz^{-1}\right)(\log Q)^{C+6}
\end{equation}
provided
\begin{equation}
\label{v_sq_free:UV_range}
P\ge \exp\left(B(\log Q)^{\frac{2}{3}}(\log\log Q)^{\frac{1}{3}}\right)
\end{equation}
where the implicit constant depends only on $A$ and $C$.
\end{lemma}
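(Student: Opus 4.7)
The plan is to prove the estimate by a Vinogradov-style combinatorial decomposition that extracts one prime factor from $q$, reducing matters to \cref{Lem:v_prime}. The base case $\nu=1$ is immediate: the sum is a subsum (restricted by $p>z$) of the one handled in \cref{Lem:v_prime}, so its bound $(P(\log Q)^{-A}+P^{3/2}Q^{-1/2})(\log Q)^{C+5}$ is already absorbed by the right-hand side of \cref{EQ:sum_over_prime_product}, with room to spare.

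For $\nu\ge 2$, I would start from the identity $\nu\,v(q)=\sum_{p\mid q}v(p)\,v(q/p)$, which holds for square-free $q$ by multiplicativity of $v$. This lets us write the left-hand side of \cref{EQ:sum_over_prime_product} as $\nu^{-1}$ times
\[
\sum_{p>z,\,p\text{ prime}}v(p)\sum_{\substack{m:\,\omega(m)=\nu-1,\,p_{\min}(m)>z\\ p\nmid m,\,m\text{ sq-free},\,P/p<m\le P'/p}}v(m)\,e\!\left(\frac{Q}{pm}\right).
\]
I would then dyadically decompose the extracted prime into ranges $p\in(P_{p},2P_{p}]$ with $P_{p}\ge z/2$, producing $O(\log Q)$ pieces, and compare $P_{p}$ with the threshold $Y:=\exp(B_{1}(\log Q)^{2/3}(\log\log Q)^{1/3})$, where $B_{1}=B_{1}(A)$ is the constant supplied by \cref{Lem:v_prime} at exponent $A$.

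In the main case $P_{p}\ge Y$, I would swap the order of summation so that for each fixed admissible $m$ the inner sum is a sum of $v(p)\,e((Q/m)/p)$ over a subinterval of $(P_{p},2P_{p}]$, and apply \cref{Lem:v_prime} with the parameter $Q/m$ replacing $Q$ (this is legitimate for $m\le Q^{1/2}$; the complementary range is absorbed by $P^{3/2}Q^{-1/2}$ since it forces $P_{p}\le P/Q^{1/2}$). Summing the two terms of \cref{Lem:v_prime} against the bounds $\sum_{m}|v(m)|\ll C^{\nu-1}P/P_{p}$ and $\sum_{m}|v(m)|m^{1/2}\ll C^{\nu-1}(P/P_{p})^{3/2}$, both coming from \cref{V1} and the trivial interval count, and then recombining $O(\log Q)$ dyadic pieces, contributes the $(P(\log Q)^{-A}+P^{3/2}Q^{-1/2})$ terms in \cref{EQ:sum_over_prime_product}; the extra factor of $\log Q$ relative to \cref{Lem:v_prime} is the source of the exponent $C+6$ in place of $C+5$.

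In the residual case $z/2\le P_{p}<Y$, the inner prime sum must be bounded by the trivial estimate $\sum_{p}|v(p)|\ll CP_{p}/\log P_{p}$, while the outer sum over $m$ is bounded by $C^{\nu-1}\#\{m\}$; to produce the target $Pz^{-1}$ term rather than the naive $P(\log\log Q)/\log z$, I would use the ceiling $m\le 2P/z$ forced by $p>z$ in combination with the dyadic structure, together with a Rankin/Mertens-type bound for the density of integers with $p_{\min}(\cdot)>z$, and possibly iterate the prime-extraction to peel off further small primes, each contributing an additional weight $1/p\le 1/z$. Tracking $|v|\le C$ at each extracted prime to obtain the overall factor $C^{\nu}$, and controlling the logarithmic losses from dyadic splitting, order-swapping, and coarse $m$-counts within the budgeted $(\log Q)^{C+6}$, is the chief bookkeeping issue; the delicate small-$P_{p}$ accounting is, I expect, precisely the ``finer Vinogradov decomposition'' advertised in the introduction, and it will be the main obstacle of the proof.
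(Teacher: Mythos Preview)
Your approach has a genuine gap in the residual case. Extracting a single prime and comparing its dyadic size to $Y=\exp(B_1(\log Q)^{2/3}(\log\log Q)^{1/3})$ leaves you with the set of $q$ whose \emph{every} prime factor lies in $(z,Y]$. For such $q$ no choice of extracted prime lands in the range where \cref{Lem:v_prime} applies, and neither iteration of the extraction nor a Rankin/Mertens density bound for $p_{\min}(m)>z$ produces any cancellation: the count of such $q$ in $(P,2P]$ can be as large as $P(\log Q)^{-O(1)}$ with an exponent far smaller than $A$, so a trivial bound on this piece destroys the lemma. You also misidentify the origin of the $Pz^{-1}$ term: it has nothing to do with small extracted primes.

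The paper's proof avoids this by working differently. After symmetrising over the $\nu!$ orderings of the prime factors, the condition that the primes be \emph{distinct} is dropped, and the error from tuples with a repeated prime is what yields the $C^{\nu}Pz^{-1}$ term. The remaining sum over unrestricted tuples is then split at the threshold $P^{1/3}$ (not $Y$): if some $p_r>P^{1/3}$, one extracts that prime and applies \cref{Lem:v_prime}, exactly as you propose. But if \emph{all} $p_i\le P^{1/3}$, the paper groups the primes into $u=p_1\cdots p_r$ and $v=p_{r+1}\cdots p_{\nu}$ with $P^{1/3}<u\le P^{2/3}$, and applies the bilinear Type~II estimate \cref{Lem:TypeII}. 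This bilinear step is the missing ingredient in your plan; without it the ``all primes small'' configuration cannot be controlled.
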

\begin{proof}
Let $B_1(A)$ and $B_2(A)$ be the constant $B(A)$ in \cref{Lem:TypeII}
and \cref{Lem:v_prime}, respectively.
For our current lemma, we take $B=3\max(B_1(A)+1,B_2(A))$.

We may assume $Q$ is larger than some absolute constant since otherwise 
\cref{V1} implies $|v(q)|\le C^{\nu}$ if $\omega(q)=\nu$ and $q$ is square-free,
so $P(\log Q)^{-A}\gg P$ implies the assertion immediately.
Similarly, we may assume $P\le Q$
since otherwise $P^{\frac{3}{2}}Q^{-\frac{1}{2}}\gg P$.
If $\nu=1$, then we can apply \cref{Lem:v_prime}.
Thus we may further assume $\nu\ge2$.

By considering the prime factorization,
we can rewrite the sum as
\begin{equation}
\label{v_sq_free:symmetrize}
\begin{aligned}
\sum_{\substack{%
P<q\le P'\\
p_{\min}(q)>z\\
q\colon\text{square-free}\\
\omega(q)=\nu}}
v(q)e\left(\frac{Q}{q}\right)
&=
\sum_{\substack{P<p_1\cdots p_{\nu}\le P'\\p_1>\cdots>p_{\nu}>z}}
v(p_1\cdots p_{\nu})e\left(\frac{Q}{p_1\cdots p_{\nu}}\right)\\
&=
\frac{1}{\nu!}
\sum_{\substack{%
P<p_1\cdots p_{\nu}\le P'\\
p_1,\ldots,p_{\nu}>z\\
p_1,\ldots,p_{\nu}\colon\text{distinct}%
}}
v(p_1\cdots p_{\nu})e\left(\frac{Q}{p_1\cdots p_{\nu}}\right)
\end{aligned}
\end{equation}
Let us consider the sets
\begin{align}
\mathcal{P}
&=
\{(p_1,\ldots,p_\nu)\mid P<p_1\cdots p_\nu\le P',\ p_1,\dots,p_\nu>z\},\\
\mathcal{Q}
&=
\{(p_1,\ldots,p_\nu)\in\mathcal{P}\mid p_1,\cdots,p_{\nu}\colon\text{distinct}\}
\end{align}
and
\[
\mathcal{R}_{ij}
=
\{(p_1,\ldots,p_\nu)\in\mathcal{P}\mid p_i=p_j\}\quad\text{for}\ 
1\le i<j\le\nu.
\]
We define a completely multiplicative function $w(n)$ by $w(p)=v(p)$.
By \cref{V1}, we have $|w(q)|\le C^\nu$
for positive integers $q$ with $\Omega(q)=\nu$.
Furthermore, for a given finite set $\mathcal{T}$ of $\nu$-tuples of primes,
we define $S(\mathcal{T})$ by
\[
S(\mathcal{T})
=
\sum_{(p_1,\ldots,p_{\nu})\in\mathcal{T}}
w(p_1\cdots p_{\nu})e\left(\frac{Q}{p_1\cdots p_{\nu}}\right).
\]
Then the equation \cref{v_sq_free:symmetrize} implies that
\begin{equation}
\label{v_sq_free:separation}
\sum_{\substack{%
P<q\le P'\\
p_{\min}(q)>z\\
q\colon\text{square-free}\\
\omega(q)=\nu}}
v(q)e\left(\frac{Q}{q}\right)
=
\frac{1}{\nu!}
S(\mathcal{Q})
=
\frac{1}{\nu!}
S(\mathcal{P})
+
O\left(\frac{C^{\nu}}{\nu!}|\mathcal{P}\setminus\mathcal{Q}|\right).
\end{equation}

By definition, we have a covering
\begin{equation}
\label{v_sq_free:covering}
\mathcal{P}\setminus\mathcal{Q}
\subset
\bigcup_{1\le i<j\le \nu}\mathcal{R}_{ij}.
\end{equation}
By symmetry, we have $|\mathcal{R}_{ij}|=|\mathcal{R}_{12}|$
for every $i$ and $j$ with $1\le i<j\le \nu$.
By the covering \cref{v_sq_free:covering}, we find that
\begin{equation}
\label{v_sq_free:remove_distinct}
\begin{aligned}
\frac{C^{\nu}}{\nu!}
|\mathcal{P}\setminus\mathcal{Q}|
&\le
\frac{C^{\nu}}{\nu!}
\sum_{1\le i<j\le \nu}|\mathcal{R}_{ij}|
=
\frac{C^{\nu}}{\nu!}
\binom{\nu}{2}|\mathcal{R}_{12}|\\
&\le
\frac{C^{\nu}}{2(\nu-2)!}\sum_{z<p\le (2P)^{\frac{1}{2}}}
\sum_{P/p^2<p_3\cdots p_{\nu}\le P'/p^2}1.
\end{aligned}
\end{equation}
For a given integer $n$ with $P/p^2<n\le P'/p^2$,
there are at most $(\nu-2)!$ ways to express $n$ as $n=p_3\cdots p_{\nu}$.
Therefore, the estimate \cref{v_sq_free:remove_distinct}
can be continued as
\begin{equation}
\label{v_sq_free:remainder}
\begin{aligned}
\frac{C^{\nu}}{\nu!}
|\mathcal{P}\setminus\mathcal{Q}|
&\le
\frac{C^{\nu}}{2}
\sum_{z<p\le (2P)^{\frac{1}{2}}}
\sum_{P/p^2<n\le P'/p^2}1\\
&\ll
C^{\nu}P\sum_{z<p\le(2P)^{\frac{1}{2}}}\frac{1}{p^2}
\ll
C^{\nu}Pz^{-1}.
\end{aligned}
\end{equation}

Let us next consider the sets
\begin{align}
\mathcal{P}_{r}(\mathrm{I})
&=
\{(p_1,\ldots,p_\nu)\in\mathcal{P}
\mid p_{r}>P^{\frac{1}{3}},\ p_1,\ldots,p_{r-1}\le P^{\frac{1}{3}}\}\quad\text{for}\ 
1\le r\le\nu,\\
\mathcal{P}(\mathrm{II})
&=
\{(p_1,\ldots,p_\nu)\in\mathcal{P}\mid p_1,\ldots,p_{\nu}\le P^{\frac{1}{3}}\}.
\end{align}
Then we have a decomposition
\[
\mathcal{P}
=\bigcup_{r=1}^{\nu}\mathcal{P}_r(\mathrm{I})\cup\mathcal{P}(\mathrm{II}),\quad
\mathcal{P}_1(\mathrm{I}),\ldots,\mathcal{P}_{\nu}(\mathrm{I}),
\mathcal{P}(\mathrm{II})\colon\text{disjoint}.
\]
Therefore, the first term on the most right-hand side
of \cref{v_sq_free:separation} can be decomposed as
\begin{equation}
\label{v_sq_free:decomp_I_II}
\frac{1}{\nu!}
S(\mathcal{P})
=
\frac{1}{\nu!}\sum_{r=1}^{\nu}S(\mathcal{P}_{r}(\mathrm{I}))
+
\frac{1}{\nu!}S(\mathcal{P}(\mathrm{II})).
\end{equation}
By symmetry among $p_1,\ldots,p_\nu$, we find that
\begin{equation}
\label{v_sq_free:TypeI}
\frac{1}{\nu!}
S(\mathcal{P}_{r}(\mathrm{I}))
=
\frac{1}{\nu!}
\sum_{d\le P'/P^{\frac{1}{3}}}w_{r}(d)
\sum_{\substack{\max(P/d,P^{\frac{1}{3}},z)<p\le P'/d}}
v(p)e\left(\frac{Q/d}{p}\right),
\end{equation}
where the arithmetic function $w_{r}(d)$ is defined by
\begin{equation}
\label{v_sq_free:w}
w_{r}(d)
=
\sum_{\substack{%
p_1\cdots p_{\nu-1}=d\\
p_1,\ldots,p_{\nu-1}>z\\
p_1,\ldots,p_{r-1}\le P^{\frac{1}{3}}}}
w(p_1\cdots p_{\nu-1}).
\end{equation}
Since for a given integer $d$, there are at most $(\nu-1)!$ ways to express $d$
in the form $d=p_1\cdots p_{\nu-1}$, by \cref{V1}, we see that
\begin{equation}
\label{v_sq_free:w_estimate}
\left|w_{r}(d)\right|\le (\nu-1)!C^{\nu-1}.
\end{equation}
We apply \cref{Lem:v_prime} to the inner sum of \cref{v_sq_free:TypeI}.
By \cref{v_sq_free:UV_range} and our choice of $B$,
\[
\max(P/d,P^{\frac{1}{3}},z)
\ge P^{\frac{1}{3}}
\ge\exp(B_2(A)(\log Q)^{\frac{2}{3}}(\log\log Q)^{\frac{1}{3}}).
\]
Also, we find that
\[
P'/d\le2P/d\le2\max(P/d,P^{\frac{1}{3}},z),\quad
Q/d\ge \tfrac{1}{2}QP^{-\frac{2}{3}}\ge \tfrac{1}{2}Q^{\frac{1}{3}}\ge 4
\]
for large $Q$ since $P\le Q$. Therefore, we may apply \cref{Lem:v_prime}
and use \cref{v_sq_free:w_estimate} to obtain
\begin{align}
\frac{1}{\nu!}
S(\mathcal{P}_{r}(\mathrm{I}))
&\ll
\frac{C^{\nu}}{\nu}\sum_{d\le P'/P^{\frac{1}{3}}}
\left(\frac{P}{d}(\log Q)^{-A}+\frac{P^{\frac{3}{2}}Q^{-\frac{1}{2}}}{d}\right)
(\log Q)^{C+5}\\
&\ll
\frac{C^{\nu}}{\nu}
\left(P(\log Q)^{-A}+P^{\frac{3}{2}}Q^{-\frac{1}{2}}\right)(\log Q)^{C+6}.
\end{align}
This implies
\begin{equation}
\label{v_sq_free:TypeI_estimate}
\frac{1}{\nu!}\sum_{r=1}^{\nu}S(\mathcal{P}_{r}(\mathrm{I}))
\ll
C^{\nu}\left(P(\log Q)^{-A}+P^{\frac{3}{2}}Q^{-\frac{1}{2}}\right)(\log Q)^{C+6}.
\end{equation}

We move on to the sum over $\mathcal{P}(\mathrm{II})$.
We further introduce the sets
\[
\mathcal{P}_{r}(\mathrm{II})
=
\left\{(p_1,\ldots,p_{\nu})\in\mathcal{P}(\mathrm{II})\midmid
p_1\cdots p_{r}> P^{\frac{1}{3}},\ 
p_1\cdots p_{r-1}\le P^{\frac{1}{3}}
\right\}\quad\text{for}\ 2\le r\le\nu.
\]
We then obtain a decomposition
\begin{equation}
\label{v_sq_free:TypeII_decomp}
\mathcal{P}(\mathrm{II})=\bigcup_{r=2}^{\nu-1}\mathcal{P}_{r}(\mathrm{II}),\quad
\mathcal{P}_{2}(\mathrm{II}),\ldots,\mathcal{P}_{\nu-1}(\mathrm{II})\colon\text{disjoint}
\end{equation}
since if $(p_1,\ldots,p_\nu)\in\mathcal{P}(\mathrm{II})$,
then $p_1\cdots p_\nu>P>P^{\frac{1}{3}}$
and $p_1,p_{\nu}\le P^{\frac{1}{3}}$ by definition.
This gives
\begin{equation}
\label{v_sq_free:decomp_II}
\frac{1}{\nu!}S(\mathcal{P}(\mathrm{II}))
=
\frac{1}{\nu!}\sum_{r=2}^{\nu-1}S(\mathcal{P}_{r}(\mathrm{II})).
\end{equation}
For each $r$, we change the variable by
\[
u=p_1\cdots p_r,\quad
v=p_{r+1}\cdots p_{\nu}.
\]
Note that the definition of $\mathcal{P}_{r}(\mathrm{II})$ implies
\[
u=p_1\cdots p_r=(p_1\cdots p_{r-1})\cdot p_r
\le P^{\frac{1}{3}}\cdot P^{\frac{1}{3}}= P^{\frac{2}{3}}.
\]
Then the sum $S(\mathcal{P}_{r}(\mathrm{II}))$ can be expressed as
\begin{equation}
\label{v_sq_free:TypeII}
S(\mathcal{P}_{r}(\mathrm{II}))
=
\sum_{\substack{P<uv\le P'\\P^{\frac{1}{3}}<u\le P^{\frac{2}{3}}}}
\alpha_{r}(u)\beta_{r}(v)e\left(\frac{Q}{uv}\right),
\end{equation}
where $\alpha_{r}(v)$ and $\beta_{r}(v)$ are defined by
\[
\alpha_{r}(u)
=
\sum_{\substack{%
p_1\cdots p_r=u\\
z<p_1,\ldots,p_r\le P^{\frac{1}{3}}\\
p_1\cdots p_{r-1}\le P^{\frac{1}{3}}
}}w(p_1\cdots p_r),\quad
\beta_{r}(v)
=
\sum_{\substack{%
p_{r+1}\cdots p_{\nu}=v\\
z<p_{r+1},\ldots,p_{\nu}\le P^{\frac{1}{3}}
}}w(p_{r+1}\cdots p_{\nu}).
\]
Similarly to the estimate \cref{v_sq_free:w_estimate}, we see that
\begin{equation}
\label{v_sq_free:alpha_beta}
|\alpha_{r}(u)|\le r!C^r,\quad
|\beta_{r}(v)|\le (\nu-r)!C^{\nu-r}.
\end{equation}
We now employ dyadic subdivision in \cref{v_sq_free:TypeII}.
This gives
\begin{equation}
\label{v_sq_free:TypeII_dissect}
S(\mathcal{P}_{r}(\mathrm{II}))
\ll
(\log Q)^2\sup|T_{r}(U,U',V,V')|,
\end{equation}
where
\[
T_{r}(U,U',V,V')
=
\sum_{\substack{P<uv\le P'\\U<u\le U'\\V<v\le V'}}
\alpha_{r}(u)\beta_{r}(v)e\left(\frac{Q}{uv}\right)
\]
and the supremum is taken over real numbers $U,U',V,V'$ in the range
\[
P/4<UV\le2P,\quad
U<U'\le 2U,\quad
V<V'\le 2V,\quad
P^{\frac{1}{3}}\le U\le P^{\frac{2}{3}}.
\]
In this range, we have $V>P/4U\ge P^{\frac{1}{3}}/4$ so that
\[
U,V
\ge P^{\frac{1}{3}}/4
\ge\exp(B_1(A)(\log Q)^{\frac{2}{3}}(\log\log Q)^{\frac{1}{3}})
\]
for large $Q$ by \cref{v_sq_free:UV_range} and our choice of $B$.
Therefore, we can apply \cref{Lem:TypeII} to the sum $T_{r}(U,U',V,V')$.
By combining with \cref{v_sq_free:alpha_beta}, \cref{Lem:TypeII} gives
\[
T_{r}(U,U',V,V')
\ll
r!(\nu-r)!C^{\nu}\left(P(\log Q)^{-A}+P^{\frac{3}{2}}Q^{-\frac{1}{2}}\right)
(\log Q).
\]
Substituting this estimate into \cref{v_sq_free:TypeII_dissect},
we obtain
\begin{align}
\frac{1}{\nu!}S(\mathcal{P}_{r}(\mathrm{II}))
&\ll
\frac{1}{\binom{\nu}{r}}
C^{\nu}\left(P(\log Q)^{-A}+P^{\frac{3}{2}}Q^{-\frac{1}{2}}\right)(\log Q)^{3}\\
&\ll
\frac{1}{\nu}
C^{\nu}\left(P(\log Q)^{-A}+P^{\frac{3}{2}}Q^{-\frac{1}{2}}\right)(\log Q)^{3}
\end{align}
provided $2\le r\le \nu-1$. By combining with \cref{v_sq_free:decomp_II}, this implies
\begin{equation}
\label{v_sq_free:TypeII_estimate}
\frac{1}{\nu!}
S(\mathcal{P}(\mathrm{II}))
\ll
C^{\nu}\left(P(\log Q)^{-A}+P^{\frac{3}{2}}Q^{-\frac{1}{2}}\right)(\log Q)^{3}.
\end{equation}
By
\cref{v_sq_free:separation},
\cref{v_sq_free:remainder},
\cref{v_sq_free:decomp_I_II},
\cref{v_sq_free:TypeI_estimate}
and
\cref{v_sq_free:TypeII_estimate},
we arrive at the assertion.
\end{proof}

Before proceeding to the exponential sums with the multiplicative
functions, we further prepare two lemmas.
The first one provides some estimates for $v(n)$.

\begin{lemma}
\label{Lem:v_L1}
Let $v(n)$ be a complex-valued multiplicative function satisfying
{\upshape\cref{V2}}.
Then for $x\ge 4$, we have
\[
\sum_{n\le x}|v(n)|\ll x(\log x)^C,\quad
\sum_{n\le x}\frac{|v(n)|}{n}\ll (\log x)^{C},
\]
where the implicit constants depend only on $C$.
\end{lemma}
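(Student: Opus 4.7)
The plan is to derive both bounds directly from \cref{V2} using Cauchy--Schwarz and partial summation, relying crucially on the fact that $C\ge 2$ (which comes built into the definition of the constant $C$ in the hypotheses of \cref{Thm:v_thm}).

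For the first bound, I would apply the Cauchy--Schwarz inequality in the form
\[
\sum_{n\le x}|v(n)|
\le \left(\sum_{n\le x}1\right)^{\!1/2}\left(\sum_{n\le x}|v(n)|^2\right)^{\!1/2}
\ll x^{1/2}\bigl(x(\log x)^{C}\bigr)^{1/2}
= x(\log x)^{C/2},
\]
where \cref{V2} is inserted in the second factor. Since $C/2\le C$ for $x\ge 4$, this already beats the asserted bound $x(\log x)^{C}$.

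For the second bound, I would set $S(t)=\sum_{n\le t}|v(n)|$ and apply partial summation:
\[
\sum_{n\le x}\frac{|v(n)|}{n}
= \frac{S(x)}{x} + \int_{1}^{x}\frac{S(t)}{t^2}\,dt,
\]
with the small range $t<4$ handled trivially using $|v(1)|=1$ and the bound $|v(n)|\ll 1$ coming from the Cauchy--Schwarz estimate of the previous paragraph. The boundary term contributes $\ll(\log x)^{C/2}$, and inserting $S(t)\ll t(\log t)^{C/2}$ into the integral gives a contribution $\ll\int_{4}^{x}(\log t)^{C/2}\,dt/t\ll(\log x)^{C/2+1}$. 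Since $C\ge 2$ forces $C/2+1\le C$, both terms are $\ll(\log x)^{C}$, as claimed.

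There is no substantive obstacle here: the only point to watch is that the loss of one logarithm from partial summation must be absorbed, which is exactly why the hypothesis is stated with $C\ge 2$ rather than $C\ge 0$.
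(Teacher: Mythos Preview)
Your proof is correct and follows essentially the same approach as the paper: Cauchy--Schwarz with \cref{V2} for the first bound, then losing one logarithm (you via partial summation, the paper via dyadic dissection) to obtain $(\log x)^{C/2+1}$ for the second, absorbed using $C\ge 2$. The only cosmetic difference is that the paper writes the second step as $\sum_{n\le x}|v(n)|/n \ll (\log x)\sup_{1\le U\le x}U^{-1}\sum_{U\le n\le 2U}|v(n)|$ rather than an Abel integral.
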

\begin{proof}
By the Cauchy--Schwarz inequality and \cref{V2},
\begin{equation}
\label{v_L1:first}
\sum_{n\le x}|v(n)|
\ll
x^{\frac{1}{2}}\left(\sum_{n\le x}|v(n)|^2\right)^{\frac{1}{2}}
\ll
x(\log x)^{\frac{C}{2}}.
\end{equation}
This proves the former estimate.
We dissect the latter sum dyadically to obtain
\begin{equation}
\label{v_L1:dissect}
\sum_{n\le x}\frac{|v(n)|}{n}
\ll
(\log x)\sup_{1\le U\le x}U^{-1}\sum_{U\le n\le 2U}|v(n)|.
\end{equation}
By substituting \cref{v_L1:first} here,
we obtain the latter estimate since $C\ge2$.
\end{proof}

The next one is a well-known estimate
for the number of smooth numbers.

\begin{lemma}
\label{Lem:de_Bruijn}
For a sufficiently large real number $x$ and a real number $y$,
we have
\[
\psi(x,y)
\le
x\exp\left(-\frac{1}{2}\frac{\log x}{\log y}\log\frac{\log x}{\log y}\right)
\]
provided
\begin{equation}
\label{de_Bruijn:y_range}
(\log x)^2\le y\le\exp\left(\frac{\log x}{\log\log x}\right).
\end{equation}
\end{lemma}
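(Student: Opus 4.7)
The plan is to apply Rankin's trick with an optimized exponent, a classical approach to upper bounds on smooth-number counting. Set $u := (\log x)/\log y$; the hypothesis \cref{de_Bruijn:y_range} is equivalent to $\log\log x \le u \le (\log x)/(2\log\log x)$, so $u \to \infty$ with $x$ while $\log u \le \log\log x \le \tfrac{1}{2}\log y$. For any $\alpha \in (0,1)$, since $(x/n)^{\alpha} \ge 1$ whenever $n \le x$,
\[
\psi(x,y) \le x^{\alpha}\sum_{\substack{n \ge 1\\ p \mid n \Rightarrow p \le y}}n^{-\alpha} = x^{\alpha}\prod_{p \le y}\bigl(1 - p^{-\alpha}\bigr)^{-1},
\]
and taking logarithms reduces the problem to bounding
\[
\log\psi(x,y) - \alpha\log x \le \sum_{p \le y}p^{-\alpha} + \sum_{p \le y}\sum_{k \ge 2}\frac{1}{kp^{k\alpha}}.
\]

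I would choose $\alpha = 1-\beta$ with $\beta := (\log u)/\log y$. The range analysis above gives $\beta \le \tfrac{1}{2}$, hence $\alpha \ge \tfrac{1}{2}$, so the tail with $k \ge 2$ is bounded by $\sum_{p \le y}p^{-2\alpha} \ll \log\log y$. For the leading sum, partial summation based on $\pi(t) \ll t/\log t$, followed by the substitution $t = y^{s}$ (which, via $y^{\beta} = u$ and $\beta\log y = \log u$, turns the integrand into $u^{s}/s$), yields the classical estimate $\sum_{p \le y}p^{-\alpha} \ll u/\log u + \log\log y$. Since $\beta\log x = u\log u$, we have $\alpha\log x = \log x - u\log u$, so combining the above gives
\[
\log\frac{\psi(x,y)}{x} \le -u\log u + O\!\left(\frac{u}{\log u} + \log\log x\right).
\]

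Both error terms are of lower order than $u\log u$: $(\log u)^{2} \to \infty$ forces $u/\log u = o(u\log u)$, while $u \ge \log\log x$ ensures $u\log u \gg (\log\log x)(\log\log\log x)$, dominating $\log\log x$. Hence for $x$ sufficiently large the right-hand side is at most $-\tfrac{1}{2}u\log u$, which is the claim. The step I expect to demand the most care is the optimization: verifying from \cref{de_Bruijn:y_range} that the choice $\beta = (\log u)/\log y$ lies safely in $(0,\tfrac{1}{2}]$ and tracking the partial-summation estimate so that the secondary contribution remains $o(u\log u)$. The slack factor $\tfrac{1}{2}$ in the exponent of the conclusion (rather than the sharper Dickman-type $1-o(1)$) is precisely what absorbs the secondary terms, so no delicate handling of implicit constants is needed.
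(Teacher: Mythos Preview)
Your argument is correct. You carry out Rankin's trick directly with the near-optimal choice $\alpha=1-(\log u)/\log y$, and your key estimates (that $\beta\le\tfrac12$ under \cref{de_Bruijn:y_range}, that $\sum_{p\le y}p^{-\alpha}\ll u/\log u+\log\log y$, and that both secondary terms are $o(u\log u)$) all check out.

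The paper's proof is shorter only because it quotes a packaged Rankin-type bound, namely Theorem~7.6 of Montgomery--Vaughan, in the form
\[
\psi(x,y)<x\exp\Bigl(-(1+o(1))u\log u+\log\log y+O\bigl(u^{2}\log u/y\bigr)\Bigr),
\]
and then verifies that the two extra terms are $o(u\log u)$ in the range \cref{de_Bruijn:y_range}. That cited theorem is itself proved by exactly the Rankin argument you wrote out, so the underlying mathematics is the same; your version is simply self-contained, at the cost of having to redo the partial-summation estimate for $\sum_{p\le y}p^{-\alpha}$ and the secondary $k\ge 2$ tail. Either route delivers the bound with room to spare, which is why the slack factor $\tfrac12$ in the exponent suffices.
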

\begin{proof}
Let
\[
u=\frac{\log x}{\log y}.
\]
Then \cref{de_Bruijn:y_range} implies
\begin{equation}
\label{de_Bruijn:u_size}
u\ge\log\log x
\end{equation}
so that $u\to\infty$ as $x\to\infty$.
By Theorem~7.6 of \cite{Montgomery_Vaughan},
\begin{equation}
\label{de_Bruijn:pre}
\psi(x,y)
<x\exp\left(-(1+o(1))u\log u
+\log\log y
+O\left(\frac{u^2\log u}{y}\right)\right)
\end{equation}
as $x\to\infty$. By \cref{de_Bruijn:u_size}, it follows that
\[
\frac{\log\log y}{u\log u}
\le
\frac{1}{\log u}
\to0
\quad\text{as $x\to\infty$}
\]
and by \cref{de_Bruijn:y_range},
\[
\frac{u}{y}\le\frac{1}{\log x}\to0
\quad\text{as $x\to\infty$}.
\]
Therefore,
\[
\log\log y=o(u\log u),\quad
\frac{u^2\log u}{y}=o(u\log u)\quad
(x\to\infty).
\]
On inserting these estimate into \cref{de_Bruijn:pre},
we arrive at the lemma.
\end{proof}

After the above preparations, we can now prove the exponential sum estimate
with our multiplicative function $v(n)$.

\begin{lemma}
\label{Lem:v}
Let $v(n)$ be a complex-valued multiplicative function satisfying
{\upshape\cref{V1}}, {\upshape\cref{V2}}, and {\upshape\cref{V3}}.
For any real number $A\ge1$, there exists a real number $B=B(A)\ge1$
such that for any real numbers $P,P',Q\ge 4$ with $P<P'\le2P$,
we have
\[
\sum_{P<n\le P'}v(n)e\left(\frac{Q}{n}\right)
\ll
\left(P(\log Q)^{-A}+P^{\frac{3}{2}}Q^{-\frac{1}{2}}\right)(\log Q)^{6C+6}
\]
provided
\begin{equation}
\label{v:UV_range}
P\ge \exp(B(\log Q)^{\frac{2}{3}}(\log\log Q)^{\frac{1}{3}}),
\end{equation}
where the implicit constant depends only on $A$ and $C$.
\end{lemma}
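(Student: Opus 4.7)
The plan is to reduce \cref{Lem:v} to \cref{Lem:v_sq_free} by factoring each $n$ in the range according to its squarefree part made up of large prime factors. Fix $z = \exp(B_0(\log Q)^{2/3}(\log\log Q)^{1/3})$ with $B_0 = B_0(A, C)$ large, and take $B$ in the statement to be a large multiple of $B_0$, so that \cref{v:UV_range} forces $P/k \ge z$ for every $k$ that will be treated via \cref{Lem:v_sq_free}. After routine reductions we may assume $P \le Q$.

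For each $n$ with $P < n \le P'$, I write $n = km$, where $m$ is the squarefree product of those primes $p > z$ dividing $n$ to the first power exactly, and $k = n/m$. Then $(k,m)=1$ and every prime factor of $k$ either lies in $[1,z]$ or appears in $k$ with multiplicity $\ge 2$; call such a $k$ \emph{exceptional}. By multiplicativity,
\[
\sum_{P < n \le P'} v(n)\, e\!\left(\frac{Q}{n}\right)
=
\sum_{k\ \text{exceptional}} v(k)
\sum_{\substack{P/k < m \le P'/k\\ m\ \text{squarefree},\ p_{\min}(m) > z\\ (m,k)=1}}
v(m)\, e\!\left(\frac{Q/k}{m}\right).
\]
The coprimality $(m,k)=1$ is removed by M\"obius inversion over the primes $p > z$ dividing $k$; since each such $p$ satisfies $p^2 \mid k$, there are $O(\log_z k)$ of them, and the resulting loss is negligible.

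I then split the inner sum dyadically and according to $\omega(m) = \nu$ for $1 \le \nu \le L := \lfloor \log P/\log z\rfloor$. For each exceptional $k$ with $P/k \ge z$, \cref{Lem:v_sq_free} applies with $P/k$, $Q/k$ in place of $P, Q$ and with internal exponent $A' = \lceil 3A/2\rceil$; the key observation is that $P/k \ge z$ and $P \le Q$ imply $\log(Q/k) \gg (\log Q)^{2/3}(\log\log Q)^{1/3}$, so that $(\log(Q/k))^{-A'} \ll (\log Q)^{-A}$. Summing over $\nu$ introduces a factor $\sum_\nu C^\nu \ll C^L = \exp(O((\log Q)^{1/3}))$, absorbed by another harmless enlargement of $A$. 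The resulting bound for the inner sum is
\[
T(k) \ll \bigl(\tfrac{P}{k}(\log Q)^{-A} + P^{3/2} k^{-1} Q^{-1/2}\bigr)(\log Q)^{C+6},
\]
the $(P/k)z^{-1}$ term being absorbed into the first once $B_0$ is large. Summing $|v(k)|\,T(k)$ over $k$ and invoking $\sum_k |v(k)|/k \ll (\log Q)^C$ from \cref{Lem:v_L1} gives a total of $(P(\log Q)^{-A} + P^{3/2} Q^{-1/2})(\log Q)^{2C+6}$, comfortably inside the budget $(\log Q)^{6C+6}$.

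In the residual case $P/k < z$, the condition $p_{\min}(m) > z$ forces $m = 1$, so the contribution comes from exceptional $n = k$ itself. Such $n$ are either $z$-smooth, or divisible by $p^2$ for some prime $p > z$; the former contribution is controlled by Cauchy--Schwarz together with \cref{Lem:de_Bruijn} and \cref{V2}, while the latter is handled by writing $n = p^2\ell$, summing over $p > z$, and again applying Cauchy--Schwarz with \cref{V2}. Both give $\ll P(\log Q)^{-A}$ once $B_0$ is large enough. The main obstacle will be the careful accounting of the several $(\log Q)^{O(1)}$ losses (from M\"obius inversion, dyadic subdivisions, the $C^L$ summation over $\nu$, and the $(\log Q)^C$ cost of $\sum_k |v(k)|/k$), but the slack of $(\log Q)^{5C}$ between \cref{Lem:v_sq_free} and the target of \cref{Lem:v} is more than sufficient to absorb them.
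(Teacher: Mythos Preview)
There is a genuine gap at the step where you sum over $\nu$. With your choice $z = \exp(B_0(\log Q)^{2/3}(\log\log Q)^{1/3})$ and $P$ allowed to be as large as $Q$, the bound $L = \lfloor\log P/\log z\rfloor$ can be of order $(\log Q)^{1/3}(\log\log Q)^{-1/3}$, and indeed you correctly compute $C^L = \exp(O((\log Q)^{1/3}))$. But this factor \emph{cannot} be absorbed by enlarging $A$: the saving from \cref{Lem:v_sq_free} is only $(\log Q)^{-A'} = \exp(-A'\log\log Q)$, and $\log\log Q = o((\log Q)^{1/3})$, so no fixed power of $\log Q$ dominates $C^L$. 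The same obstruction hits the $P^{3/2}Q^{-1/2}$ term, since after summing over $k$ you need $C^L(\log Q)^{C} \ll (\log Q)^{6C+6}$, which again fails.

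The paper's remedy is precisely to choose $z$ so that $L$ stays of size $\log\log Q$: one takes $z = \exp\bigl(\log P/(2\log\log Q)\bigr)$, which forces $\omega(q) < 4\log\log Q$ for any $q\le 2P$ with $p_{\min}(q)>z$, and then $\sum_{\nu}C^{\nu}\ll (\log Q)^{4C}$ is a genuine polylog factor that fits within the budget $(\log Q)^{6C+6}$. The price is that this $z$ may be much larger than the threshold in \cref{v_sq_free:UV_range}, so one can no longer split according to whether $P/k\ge z$; instead the paper splits at $m\le P^{1/2}$ versus $m>P^{1/2}$ (where $m$ is the $z$-smooth part of $n$), ensuring that in the main range $P/m\ge P^{1/2}$ is large enough to invoke \cref{Lem:v_sq_free}, while the tail $m>P^{1/2}$ is controlled via Cauchy--Schwarz, \cref{V2}, and the smooth-number bound \cref{Lem:de_Bruijn}. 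Your overall architecture (factor out a rough squarefree part, apply \cref{Lem:v_sq_free}, and treat the residue via smooth numbers and squares) matches the paper's, but the choice of $z$ and the location of the split are exactly where the argument is delicate, and your current choices do not work.
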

\begin{proof}
Let $B_1(A)$ be the constant $B(A)$ in \cref{Lem:v_sq_free}
and we take $B=2B_1(A)$ for the current proof.
We may assume that $Q$ is larger than some constant
depending only on $A$ since otherwise \cref{Lem:v_L1} and $P(\log Q)^{-A}\gg P$
imply the assertion trivially.
Also, we may assume $2P\le Q$ since otherwise 
\cref{Lem:v_L1} and $P^{\frac{3}{2}}Q^{-\frac{1}{2}}\gg P$
gives the assertion.

Let
\begin{equation}
\label{v:z}
z:=\exp\left(\frac{\log P}{2\log\log Q}\right)
\end{equation}
so that
\begin{equation}
\label{v:z_large}
z
\ge\exp\left(\frac{B}{2}\left(\frac{\log Q}{\log\log Q}\right)^{\frac{2}{3}}\right)
\gg(\log Q)^{2A}
\end{equation}
by \cref{v:UV_range}.
For an integer $n$ with $P<n\le P'$, we have a unique expression
\begin{equation}
\label{v:n_mq}
n=mq,\quad
P<mq\le P',\quad
p_{\max}(m)\le z,\quad
p_{\min}(q)>z.
\end{equation}
Therefore, by using this expression for the change of variables, we have
\begin{equation}
\label{v:smooth_rough_decomp}
\begin{aligned}
\sum_{P<n\le P'}v(n)e\left(\frac{Q}{n}\right)
&=
\sum_{\substack{P<mq\le P'\\p_{\max}(m)\le z\\p_{\min}(q)>z}}
v(m)v(q)e\left(\frac{Q}{mq}\right)\\
&=
\sum_{m\le P^{\frac{1}{2}}}
+
\sum_{m> P^{\frac{1}{2}}}
=
{\sum}_{1}+{\sum}_{2},\quad\text{say}
\end{aligned}
\end{equation}
since the conditions $p_{\max}(m)\le z$ and $p_{\min}(q)> z$ imply $(m,q)=1$.

The sum ${\sum}_1$ can be expressed as
\begin{equation}
\label{v:sum1_double_sum}
{\sum}_1
=
\sum_{\substack{m\le P^{\frac{1}{2}}\\p_{\max}(m)\le z}}v(m)
\sum_{\substack{P/m<q\le P'/m\\p_{\min}(q)>z}}v(q)e\left(\frac{Q/m}{q}\right).
\end{equation}
We now classify $q$ in the inner sum
according to whether or not $q$ is square-free, i.e.~%
we introduce a decomposition
\begin{equation}
\label{v:sum1_inner_decomp}
\begin{aligned}
\sum_{\substack{P/m<q\le P'/m\\p_{\min}(q)>z}}v(q)e\left(\frac{Q/m}{q}\right)
&=
\sum_{q\colon\text{square-free}}
+
\sum_{q\colon\text{not square-free}}\\
&=
{\sum}_{11}+{\sum}_{12},\quad\text{say}.
\end{aligned}
\end{equation}
For the former sum ${\sum}_{11}$, we apply \cref{Lem:v_sq_free}.
We first classify $q$ according to the value of $\omega(q)$.
Since $p_{\min}(q)>z$, we have
\[
P'\ge q>z^{\omega(q)}
\]
so that
\[
\omega(q)
<\frac{\log P'}{\log z}
\le\frac{2\log P}{\log z}
=4\log\log Q.
\]
Also, since $q>P/m\ge P^{\frac{1}{2}}>1$ provided $m\le P^{\frac{1}{2}}$,
we have $\omega(q)\ge1$.
Therefore,
\begin{equation}
\label{v:sum11_omega_decomp}
{\sum}_{11}
=
\sum_{\nu=1}^{[4\log\log Q]}
\sum_{\substack{%
P/m<q\le P'/m\\
p_{\min}(q)>z\\
q\colon\text{square-free}\\
\omega(q)=\nu}}
v(q)e\left(\frac{Q/m}{q}\right)
\end{equation}
We now apply \cref{Lem:v_sq_free} to the inner sum.
Since
\[
P/m
\ge P^{\frac{1}{2}}
\ge\exp(B_1(A)(\log Q)^{\frac{2}{3}}(\log\log Q)^{\frac{1}{3}}),\quad
Q/m\ge QP^{-\frac{1}{2}}\ge Q^{\frac{1}{2}}\ge4
\]
for large $Q$ provided $m\le P^{\frac{1}{2}}$,
we may apply \cref{Lem:v_sq_free} to obtain
\begin{equation}
\begin{aligned}
\sum_{\substack{%
P/m<q\le P'/m\\
p_{\min}(q)>z\\
q\colon\text{square-free}\\
\omega(q)=\nu}}
v(q)e\left(\frac{Q/m}{q}\right)
&\ll
C^{\nu}\left(\frac{P}{m}(\log Q)^{-A}
+\frac{P^{\frac{3}{2}}Q^{-\frac{1}{2}}}{m}
+\frac{P}{m}z^{-1}\right)
(\log Q)^{C+6}\\[-5mm]
&\ll
e^{C\nu}
\left(\frac{P}{m}(\log Q)^{-A}+\frac{P^{\frac{3}{2}}Q^{-\frac{1}{2}}}{m}\right)
(\log Q)^{C+6},
\end{aligned}
\end{equation}
where we used \cref{v:z_large}. By substituting this estimate into
\cref{v:sum11_omega_decomp} and by using
\[
\sum_{\nu=1}^{[4\log\log Q]}e^{C\nu}
\le
e^{4C\log\log Q}\left(1+e^{-C}+e^{-2C}+\cdots\right)
\ll
(\log Q)^{4C},
\]
we obtain
\begin{equation}
\label{v:sum11_estimate}
{\sum}_{11}
\ll
\left(\frac{P}{m}(\log Q)^{-A}+\frac{P^{\frac{3}{2}}Q^{-\frac{1}{2}}}{m}\right)
(\log Q)^{5C+6}
\end{equation}
For the latter sum ${\sum}_{12}$, by the Cauchy-Schwarz inequality and \cref{V2},
\begin{align}
\left({\sum}_{12}\right)^2
&\ll
\frac{P(\log P)^C}{m}
\sum_{\substack{%
P/m<q\le P'/m\\
p_{\min}(q)>z\\
q\colon\text{not square-free}}}1\\
&\ll
\frac{P(\log P)^C}{m}
\sum_{z<p\le(2P)^{\frac{1}{2}}}
\sum_{P/mp^2<q\le P'/mp^2}1\\
&\ll
\left(\frac{P(\log P)^C}{m}\right)^2
\sum_{z<p\le(2P)^{\frac{1}{2}}}\frac{1}{p^2}
\ll
\frac{1}{z}\left(\frac{P(\log P)^C}{m}\right)^2
\end{align}
Thus, by using \cref{v:z_large},
\begin{equation}
\label{v:sum12_estimate}
{\sum}_{12}
\ll
\frac{P(\log P)^C}{m}z^{-\frac{1}{2}}
\ll
\frac{P}{m}(\log Q)^{C-A}.
\end{equation}
By \cref{v:sum1_inner_decomp}, \cref{v:sum11_estimate} and \cref{v:sum12_estimate},
we obtain
\[
\sum_{\substack{P/m<q\le P'/m\\p_{\min}(q)>z}}v(q)e\left(\frac{Q/m}{q}\right)
\ll
\left(\frac{P}{m}(\log Q)^{-A}+\frac{P^{\frac{3}{2}}Q^{-\frac{1}{2}}}{m}\right)
(\log Q)^{5C+6}
\]
By substituting this into \cref{v:sum1_double_sum}
and by using \cref{Lem:v_L1},
we obtain
\begin{equation}
\label{v:sum1_estimate}
{\sum}_1
\ll
\left(P(\log Q)^{-A}+P^{\frac{3}{2}}Q^{-\frac{1}{2}}\right)(\log Q)^{6C+6}.
\end{equation}

We next estimate ${\sum}_2$.
We first estimate trivially
\[
{\sum}_2
\ll
\sum_{\substack{%
P<mq\le P'\\
p_{\max}(m)\le z\\
p_{\min}(q)>z\\
m>P^{\frac{1}{2}}}}|v(mq)|.
\]
Then by the Cauchy--Schwarz inequality,
\begin{equation}
\label{v:sum2_CS}
\left({\sum}_2\right)^2
\ll
\Bigg(
\sum_{\substack{%
P<mq\le P'\\
p_{\max}(m)\le z\\
p_{\min}(q)>z}}|v(mq)|^2
\Bigg)
\Bigg(
\sum_{\substack{%
P<mq\le P'\\
p_{\max}(m)\le z\\
p_{\min}(q)>z\\
m>P^{\frac{1}{2}}}}1
\Bigg)
=
\left({\sum}_{21}\right)\left({\sum}_{22}\right),\quad\text{say}.
\end{equation}
For the former sum ${\sum}_{21}$,
we recall our change of variables \cref{v:n_mq}
and trace back the substitution to obtain
\begin{equation}
\label{v:sum21_estimate}
{\sum}_{21}
=
\sum_{P<n\le P'}|v(n)|^2
\ll
P(\log P)^{C},
\end{equation}
where we used \cref{V2}. 
For the latter sum ${\sum}_{22}$, we use \cref{Lem:de_Bruijn}
with the parameters $P^{\frac{1}{2}}<x\le P'$ and $y=z$.
We can check \cref{de_Bruijn:y_range}
with this choice of parameters as
\[
(\log x)^2\le (\log P')^2\le (\log Q)^2
\le z=\exp\left(\frac{\log P}{2\log\log Q}\right)
\le\exp\left(\frac{\log x}{\log\log x}\right).
\]
Thus, we may apply \cref{Lem:de_Bruijn} to obtain
\begin{align}
\psi(x,z)
&\ll
x\exp\left(-\frac{1}{2}\frac{\log x}{\log z}\log\frac{\log x}{\log z}\right)\\
&\ll
x\exp\left(-\frac{1}{4}\frac{\log P}{\log z}\log\frac{\log P}{2\log z}\right)\\
&\ll
x\exp\left(-\frac{1}{2}\log\log Q\log\log\log Q\right)
\ll
x(\log Q)^{-2A}
\ll
P(\log Q)^{-2A}
\end{align}
for $P^{\frac{1}{2}}\le x\le P'$
provided $Q$ is larger than some constant depending on $A$.
Thus,
\begin{equation}
\label{v:sum22_estimate}
{\sum}_{22}
=
\sum_{\substack{q\le P'/P^{\frac{1}{2}}\\p_{\min}(q)>z}}
\sum_{\substack{P/q<m\le P'/q\\p_{\max}(m)\le z}}1
\ll
P(\log Q)^{-2A}\sum_{q\le P'/P^{\frac{1}{2}}}\frac{1}{q}
\ll
P(\log Q)^{-2A+1}.
\end{equation}
Combining this estimate with \cref{v:sum2_CS} and \cref{v:sum21_estimate},
we obtain
\begin{equation}
\label{v:sum2_estimate}
{\sum}_{2}
\ll
P(\log Q)^{C-A}.
\end{equation}
By substituting \cref{v:sum1_estimate} and \cref{v:sum2_estimate}
into \cref{v:smooth_rough_decomp}, we arrive at the lemma.
\end{proof}

\section{Error term estimate}
In this section, we prove \cref{Thm:v_thm}
by using the exponential sum estimate obtained in \cref{Section:v}.
We start with a standard translation of sums with $\psi(x)$
to exponential sums.

\begin{lemma}
\label{Lem:Vaaler}
Let $(x_n)_{n\in I}$ be a sequence of real numbers
defined over integers in a set of integers $I$,
$g(n)$ be a complex-valued function defined on $I$,
$G(n)$ be a positive function defined on $I$ such that
\begin{equation}
\label{Vaaler:G}
|g(n)|\le G(n)
\end{equation}
for all integer $n\in I$, and $H\ge1$ be a real number. Then
\[
\left|\sum_{n\in I}g(n)\psi(x_n)\right|
\ll
\sum_{0<|h|\le H}\frac{1}{|h|}\left|\sum_{n\in I}g(n)e(hx_n)\right|
+\frac{1}{H}\sum_{0\le h\le H}\left|\sum_{n\in I}G(n)e(hx_n)\right|,
\]
where the implicit constant is absolute.
\end{lemma}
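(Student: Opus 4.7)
The plan is to invoke Vaaler's classical trigonometric approximation to $\psi(x)$. This theorem produces, for each parameter $H\ge 1$, complex coefficients $c_h$ for $0<|h|\le H$ with $|c_h|\ll 1/|h|$ together with nonnegative coefficients $m_h$ for $|h|\le H$ satisfying $m_h\ll 1/H$, such that
\[
\left|\psi(x)-\sum_{0<|h|\le H}c_h\,e(hx)\right| \le \sum_{|h|\le H}m_h\,e(hx)
\]
for every real $x$, the right-hand side being a nonnegative Fej\'er-type kernel.

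The rest of the argument is a mechanical substitution. Setting $x=x_n$, multiplying by $g(n)$, summing over $n\in I$, and applying the triangle inequality yields
\[
\biggl|\sum_{n\in I}g(n)\psi(x_n)\biggr|
\le
\biggl|\sum_{0<|h|\le H}c_h\sum_{n\in I}g(n)e(hx_n)\biggr|
+\sum_{n\in I}|g(n)|\sum_{|h|\le H}m_h\,e(hx_n),
\]
where I use the pointwise nonnegativity of the Vaaler majorant in the error term. Bounding $|g(n)|\le G(n)$ via the hypothesis \cref{Vaaler:G}, swapping orders of summation, and inserting the bounds $|c_h|\ll 1/|h|$ and $m_h\ll 1/H$ then produces
\[
\sum_{0<|h|\le H}\frac{1}{|h|}\biggl|\sum_{n\in I}g(n)e(hx_n)\biggr|
+\frac{1}{H}\sum_{|h|\le H}\biggl|\sum_{n\in I}G(n)e(hx_n)\biggr|.
\]
Finally, since $G(n)$ is real-valued, $\sum_{n\in I}G(n)e(-hx_n)=\overline{\sum_{n\in I}G(n)e(hx_n)}$, so the modulus of the inner exponential sum is invariant under $h\mapsto -h$; restricting to $0\le h\le H$ at the cost of an absolute constant factor delivers exactly the second term in the claimed bound.

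The argument is entirely formal once Vaaler's theorem is in hand; the only point requiring any care is the symmetry between positive and negative frequencies, which translates the natural symmetric range $|h|\le H$ of the Vaaler majorant into the asymmetric range $0\le h\le H$ appearing in the statement. No deeper obstacle arises.
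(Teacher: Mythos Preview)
Your proposal is correct and follows essentially the same approach as the paper: both invoke Vaaler's trigonometric approximation to $\psi(x)$, split into the main approximant plus a nonnegative Fej\'er-type majorant for the error, apply $|g(n)|\le G(n)$ to the error term, and then use the reality of $G(n)$ to fold the range $|h|\le H$ down to $0\le h\le H$. The only cosmetic difference is that the paper writes out the explicit Fej\'er coefficients $(1-|h|/(N+1))/(2(N+1))$ with $N=[H]$ whereas you state the bounds $|c_h|\ll 1/|h|$, $m_h\ll 1/H$ abstractly.
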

\begin{proof}
Let $N=[H]\ge1$. Then we use Vaaler's approximation
\begin{equation}
\label{Vaaler:approximant}
\psi_{N}(x)=\sum_{0<|h|\le N}c_{N}(h)e(hx),
\end{equation}
\begin{equation}
\label{Vaaler:error}
|\psi(x)-\psi_{N}(x)|
\le\frac{1}{2(N+1)}\sum_{|h|\le N}\left(1-\frac{|h|}{N+1}\right)e(hx),
\end{equation}
where the complex coefficients $c_{N}(h)$ satisfies
\begin{equation}
\label{Vaaler:coefficient}
|c_{N}(h)|\le\frac{1}{2\pi|h|}.
\end{equation}
For the details and the proof of this approximation,
see \cite[p.210, Theorem~18]{Vaaler} or \cite[p.116, Theorem A.6]{Graham_Kolesnik}.
Then, we decompose the sum as
\begin{equation}
\label{Vaaler:decomposition}
\sum_{n\in I}g(n)\psi(x_n)
=
\sum_{n\in I}g(n)\psi_{N}(x_n)+\sum_{n\in I}g(n)\left(\psi(x_n)-\psi_{N}(x_n)\right).
\end{equation}
For the former sum, by \cref{Vaaler:approximant} and \cref{Vaaler:coefficient},
we have
\begin{equation}
\label{Vaaler:g_part}
\begin{aligned}
\Bigg|\sum_{n\in I}g(n)\psi_{N}(x_n)\Bigg|
&=
\Bigg|\sum_{0<|h|\le N}c_{N}(h)\sum_{n\in I}g(n)e(hx_n)\Bigg|\\
&\ll
\sum_{0<|h|\le H}\frac{1}{|h|}\Bigg|\sum_{n\in I}g(n)e(hx_n)\Bigg|.
\end{aligned}
\end{equation}
For the latter sum, by \cref{Vaaler:G},
\[
\Bigg|\sum_{n\in I}g(n)\left(\psi(x_n)-\psi_{N}(x_n)\right)\Bigg|
\le
\sum_{n\in I}G(n)|\psi(x_n)-\psi_{N}(x_n)|.
\]
Then we use \cref{Vaaler:error} to obtain
\begin{equation}
\label{Vaaler:pre_G_part}
\left|\sum_{n\in I}g(n)\left(\psi(x_n)-\psi_{N}(x_n)\right)\right|
\le
\frac{1}{2(N+1)}\sum_{|h|\le N}\Bigg|\sum_{n\in I}G(n)e(hx_n)\Bigg|.
\end{equation}
Since $G(n)$ is real valued, by taking the complex conjugate, we find that
\[
\Bigg|\sum_{n\in I}G(n)e(hx_n)\Bigg|=\Bigg|\sum_{n\in I}G(n)e(-hx_n)\Bigg|.
\]
Thus, by \cref{Vaaler:pre_G_part}, we have
\begin{equation}
\label{Vaaler:G_part}
\left|\sum_{n\in I}g(n)\left(\psi(x_n)-\psi_{N}(x_n)\right)\right|
\ll
\frac{1}{H}\sum_{0\le h\le H}\Bigg|\sum_{n\in I}G(n)e(hx_n)\Bigg|.
\end{equation}
Combining \cref{Vaaler:decomposition}, \cref{Vaaler:g_part} and \cref{Vaaler:G_part},
we arrive at the lemma.
\end{proof}

By \cref{Lem:Vaaler},
we can now translate the exponential estimate given in \cref{Lem:v}
to the estimate of the sum involving $\psi(x)$.

\begin{lemma}
\label{Lem:v_psi}
Let $v(n)$ be a complex-valued multiplicative function satisfying
{\upshape\cref{V1}}, {\upshape\cref{V2}}, and {\upshape\cref{V3}}.
For any real number $A\ge1$, there exists a real number $B=B(A,C)\ge1$
such that for any real numbers $P,P',Q\ge 4$ with $P<P'\le2P$,
we have
\[
\sum_{P<n\le P'}v(n)\psi\left(\frac{Q}{n}\right)
\ll
P(\log Q)^{-A}
\]
provided
\begin{equation}
\label{v_psi:UV_range}
\exp(B(\log Q)^{\frac{2}{3}}(\log\log Q)^{\frac{1}{3}})
\le P
\le Q(\log Q)^{-2A-12C-14}.
\end{equation}
where the implicit constant depends only on $A$ and $C$.
\end{lemma}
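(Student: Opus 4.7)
The plan is to feed \cref{Lem:Vaaler} with the exponential sum bound from \cref{Lem:v}. I would set $I = \{n \in \mathbb{Z} : P < n \le P'\}$, $g(n) = v(n)$, $G(n) = |v(n)|$, $x_n = Q/n$, and choose the truncation parameter $H = (\log Q)^{A+C+2}$, so that $\log H \ll \log\log Q$ and in particular $\log(hQ) \asymp \log Q$ uniformly for $1 \le h \le H$. This reduces the task to bounding, for each such $h$, the exponential sums
\[
\sum_{P < n \le P'} v(n)\, e\!\left(\frac{hQ}{n}\right) \quad\text{and}\quad \sum_{P < n \le P'} |v(n)|\, e\!\left(\frac{hQ}{n}\right),
\]
together with the trivial term $\sum |v(n)|$ coming from $h = 0$ in the second sum.

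First I would observe that $|v(n)|$, being the absolute value of a multiplicative function, is itself multiplicative, and satisfies \cref{V1}, \cref{V2}, and \cref{V3} with the same constant $C$ (the monotonicity-type bound \cref{V3} is preserved by the triangle inequality $||v(p_{n+1})| - |v(p_n)|| \le |v(p_{n+1}) - v(p_n)|$). Hence \cref{Lem:v} is applicable to both sums with the same constant. Taking $A' = A + 6C + 7$ in \cref{Lem:v}, each exponential sum (with $1 \le h \le H$) is bounded by
\[
\bigl(P(\log Q)^{-A'} + P^{3/2}(hQ)^{-1/2}\bigr)(\log Q)^{6C+6},
\]
provided $P$ satisfies \cref{v:UV_range} with $B$ chosen to be $B_1(A')$ from \cref{Lem:v}; this is ensured by the hypothesis with $B = B(A,C)$ large enough.

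Now I would sum. For the $g$-part of \cref{Lem:Vaaler}, $\sum_{0 < |h| \le H} 1/|h| \ll \log\log Q$ absorbs into the $(\log Q)^{6C+6}$ factor to give $P(\log Q)^{-A-1}$ from the main term, while the error $P^{3/2}Q^{-1/2}(\log Q)^{6C+6}\sum|h|^{-3/2} \ll P^{3/2}Q^{-1/2}(\log Q)^{6C+6}$ is $\ll P(\log Q)^{-A}$ exactly under the upper bound $P \le Q(\log Q)^{-2A-12C-14}$ in \cref{v_psi:UV_range} (with a couple of logs to spare). For the $G$-part, the $h = 0$ contribution is $H^{-1}\sum_{P<n\le P'}|v(n)| \ll H^{-1} P(\log P)^C$ via \cref{Lem:v_L1}, which is $\ll P(\log Q)^{-A-1}$ by the choice of $H$; the remaining $1 \le h \le H$ contribute $H^{-1}\cdot H \cdot (P(\log Q)^{-A'} + P^{3/2}Q^{-1/2})(\log Q)^{6C+6}$, handled identically to the previous case.

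The only real subtlety, and the main thing to track carefully, is the bookkeeping of logarithmic powers, specifically verifying that the exponent $6C+6$ in \cref{Lem:v}, together with the $\log H$ factor from harmonic summation and the $(\log P)^C$ factor from \cref{Lem:v_L1}, can all be absorbed simultaneously into the target loss $(\log Q)^{-A}$; this forces both $A' \ge A + 6C + 7$ and $H \ge (\log Q)^{A+C+2}$, and in turn fixes the polynomial loss $(\log Q)^{-2A-12C-14}$ in the upper constraint on $P$ in \cref{v_psi:UV_range}. No combinatorial work is needed beyond what is already encapsulated in \cref{Lem:v} and \cref{Lem:Vaaler}.
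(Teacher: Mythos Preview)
Your proposal is correct and follows essentially the same approach as the paper's proof: apply \cref{Lem:Vaaler} with $H$ a power of $\log Q$, then feed in \cref{Lem:v} (with $A'=A+6C+7$) for each nonzero $h$, using that $|v(n)|$ also satisfies \cref{V1}--\cref{V3}. The only cosmetic differences are the exact value of $H$ (the paper takes $(\log Q)^{A+C}$) and that the paper handles the negative-$h$ terms in the $g$-sum explicitly by conjugation, applying \cref{Lem:v} to $\overline{v(n)}$---a point you glossed over when restricting to $1\le h\le H$, but which is immediate since \cref{V1}--\cref{V3} depend only on $|v|$.
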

\begin{proof}
We may assume that $Q$ is larger than some absolute constant depending on $A$ and $C$.
Let $B_1(A)$ be the constant $B(A)$ in \cref{Lem:v}
and for the current proof, we take $B(A)=2B_1(A+6C+7)$.
We use \cref{Lem:Vaaler} with
\[
x_n=\frac{Q}{n},\quad
I=(P,P']\cap\mathbb{Z},\quad
g(n)=v(n),\quad
G(n)=|v(n)|,\quad
H=(\log Q)^{A+C}.
\]
For the assumption of \cref{Lem:v}, it suffices to check that
\begin{align}
P
&\ge\exp(2B_1(A+6C+7)(\log Q)^{\frac{2}{3}}(\log\log Q)^{\frac{1}{3}})\\
&\ge\exp(B_1(A+6C+7)(\log hQ)^{\frac{2}{3}}(\log\log hQ)^{\frac{1}{3}}),
\end{align}
which holds for $1\le h\le H$ and sufficiently large $Q$.
Note that the multiplicative function $\overline{v(n)}$
also satisfies \cref{V1}, \cref{V2}, and \cref{V3}.
Thus, we may use \cref{Lem:v} with $g(n)=v(n)$ and $\overline{v(n)}$.
This gives
\begin{equation}
\label{v_psi:g_sum_estimate}
\begin{aligned}
&\sum_{0<|h|\le H}\frac{1}{|h|}\left|\sum_{P<n\le P'}v(n)e\left(\frac{hQ}{n}\right)\right|\\
&=
\sum_{1\le h\le H}\frac{1}{h}
\left|\sum_{P<n\le P'}v(n)e\left(\frac{hQ}{n}\right)\right|
+
\sum_{1\le h\le H}\frac{1}{h}
\left|\sum_{P<n\le P'}\overline{v(n)}e\left(\frac{hQ}{n}\right)\right|\\
&\ll
\left(P(\log Q)^{-A-6C-7}+P^{\frac{3}{2}}Q^{-\frac{1}{2}}\right)(\log Q)^{6C+7}
\ll
P(\log Q)^{-A}
\end{aligned}
\end{equation}
by \cref{v_psi:UV_range}.
For the sum with $G(n)=|v(n)|$, we use \cref{Lem:v_L1} to the term $h=0$.
By \cref{Lem:v_L1}, this gives
\begin{equation}
\label{v_psi:G_sum}
\begin{aligned}
&\frac{1}{H}\sum_{0\le h\le H}
\left|\sum_{P<n\le P'}|v(n)|e\left(\frac{hQ}{n}\right)\right|\\
&\ll
P(\log Q)^{-A}
+
\sum_{1\le h\le H}\frac{1}{h}\left|\sum_{P<n\le P'}|v(n)|e\left(\frac{hQ}{n}\right)\right|.
\end{aligned}
\end{equation}
The function $|v(n)|$ trivially satisfies \cref{V1} and \cref{V2}
with the same value of $C$ as for $v(n)$.
For \cref{V3}, by the triangle inequality, we can see
\[
\sum_{p_n\le x}||v(p_{n+1})|-|v(p_n)||
\le
\sum_{p_n\le x}|v(p_{n+1})-v(p_n)|
\le
C(\log x)^C
\]
for $x\ge4$. Therefore we can estimate the second term on the right-hand side
of \cref{v_psi:G_sum} similarly to \cref{v_psi:g_sum_estimate}.
Therefore,
\begin{equation}
\label{v_psi:G_sum_estimate}
\frac{1}{H}\sum_{0\le h\le H}\left|\sum_{P<n\le P'}|v(n)|e\left(\frac{hQ}{n}\right)\right|
\ll
P(\log Q)^{-A}.
\end{equation}
On inserting \cref{v_psi:g_sum_estimate} and \cref{v_psi:G_sum_estimate}
into \cref{Lem:Vaaler} with our choice, we obtain the lemma.
\end{proof}

We can now prove \cref{Thm:v_thm}.

\begin{proof}[Proof of \cref{Thm:v_thm}]
We may assume that $x$ is larger than some constant
depending only on $\theta$, $C$, and the implicit constant in {\upshape\cref{V}}.
Let us take $B=B(1,C)$ in \cref{Lem:v_psi}
and let
\[
z=\exp(B(\log x)^{\frac{2}{3}}(\log\log x)^{\frac{1}{3}}).
\]
If $y\le z$, then the assertion immediately follows by \cref{V}.
Thus we may assume $z<y$.
Then we dissect the sum at $n=z$ as
\begin{equation}
\label{v_thm:decomp}
\sum_{n\le y}\frac{v(n)}{n}\psi\left(\frac{x}{n}\right)
=
\sum_{n\le z}+\sum_{z<n\le y}
=
{\sum}_1+{\sum}_2,\quad\text{say}.
\end{equation}
By \cref{V}, the former sum ${\sum}_1$ is just
\begin{equation}
\label{v_thm:sum1_estimate}
{\sum}_1
\ll
\sum_{n\le z}\frac{|v(n)|}{n}
\ll
(\log z)^{\kappa}
\ll
(\log x)^{\frac{2\kappa}{3}}(\log\log x)^{\frac{\kappa}{3}}.
\end{equation}
For the latter sum ${\sum}_2$, we employ dyadic subdivision
and partial summation
\begin{equation}
\label{v_thm:sum2_dissect}
\begin{aligned}
{\sum}_2
&\ll
(\log x)\sup_{\substack{z\le P\le y\\P<P'\le 2P}}
\left|\sum_{P<n\le P'}\frac{v(n)}{n}\psi\left(\frac{x}{n}\right)\right|\\
&\ll
(\log x)\sup_{\substack{z\le P\le y\\P<P'\le 2P}}
P^{-1}\left|\sum_{P<n\le P'}v(n)\psi\left(\frac{x}{n}\right)\right|.
\end{aligned}
\end{equation}
If $x$ is larger than some constant depending on $C$ and $\theta$,
then $z<P\le y$ implies
\[
\exp(B(1,C)(\log x)^{\frac{2}{3}}(\log\log x)^{\frac{1}{3}})
\le P
\le x\exp\left(-(\log x)^{\theta}\right)\le x(\log x)^{-12C-16}.
\]
Hence, we may apply \cref{Lem:v_psi} with $A=1$
to the right-hand side of \cref{v_thm:sum2_dissect}.
Then
\begin{equation}
\label{v_thm:sum2_estimate}
{\sum}_2
\ll
1.
\end{equation}
On inserting \cref{v_thm:sum1_estimate} and \cref{v_thm:sum2_estimate}
into \cref{v_thm:decomp}, we arrive at the theorem.
\end{proof}

\section{The Balakrishnan--P\'etermann method}
\label{Section:BP_completed}
We next prove \cref{Thm:BP_completed} by using \cref{Thm:v_thm}.
Thus, in this section, we discuss under the hypothesis in \cref{Thm:BP}.
It is not clear whether or not the hypothesis of \cref{Thm:BP}
immediately implies the assumptions of \cref{Thm:v_thm}.
Indeed, the hypothesis of \cref{Thm:BP} does not assure
the multiplicativity of $v(n)$.
Therefore, we decompose $v(n)$ into $\tau_{\alpha}(n)$ and $b(n)$
by using the definition \cref{BP:v} and apply \cref{Thm:v_thm}
to the function $\tau_{\alpha}(n)$.
We start with recalling basic properties of $\tau_{\alpha}(n)$.

\begin{lemma}
\label{Lem:tau_explicit}
For any complex number $\alpha$,
we have the Dirichlet series expansion
\begin{equation}
\label{tau_explicit:series}
\zeta(s)^{\alpha}=\sum_{n=1}^{\infty}\frac{\tau_{\alpha}(n)}{n^s},
\end{equation}
where the multiplicative function $\tau_{\alpha}(n)$ is defined by
\begin{equation}
\label{tau_explicit:explicit}
\tau_{\alpha}(1)=1,\quad
\tau_{\alpha}(p^{\nu})
=\binom{\alpha+\nu-1}{\nu}
=\frac{\prod_{\ell=1}^{\nu}(\alpha+\ell-1)}{\nu!}
\end{equation}
and the series {\upshape\cref{tau_explicit:series}}
converges absolutely for $\sigma>1$. Moreover, we have
\begin{equation}
\label{tau_explicit:bound}
\tau_{\alpha}(n)\ll n^{\epsilon}
\end{equation}
for every $\epsilon>0$, where the implicit constant depends on $\alpha$ and $\epsilon$.
\end{lemma}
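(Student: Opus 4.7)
The plan is to first establish the Euler product identity $\zeta(s)^\alpha=\prod_p(1-p^{-s})^{-\alpha}$ on $\sigma>1$, then expand each local factor by the binomial series, and finally multiply out the product into the Dirichlet series \cref{tau_explicit:series}. For the first step, I would start from the absolutely convergent expansion $\log\zeta(s)=\sum_p\sum_{\nu\ge1}p^{-\nu s}/\nu$ on $\sigma>1$ (which in particular certifies $\zeta(s)\ne0$ there), fix the branch of $\zeta(s)^\alpha$ as $\exp(\alpha\log\zeta(s))$ in agreement with the normalisation $\arg\zeta(s)=0$ for real $s>1$ used in the paper, and read off the Euler product factor-by-factor by grouping the $p$-th summands. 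The binomial expansion $(1-x)^{-\alpha}=\sum_{\nu\ge0}\binom{\alpha+\nu-1}{\nu}x^\nu$ valid for $|x|<1$ then yields the local factor $\sum_{\nu\ge0}\binom{\alpha+\nu-1}{\nu}p^{-\nu s}$, which matches \cref{tau_explicit:explicit} at prime powers.

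To pass from the Euler product to a Dirichlet series I would first treat the case $\alpha\in\mathbb{R}_{\ge0}$, in which every local coefficient $\binom{\alpha+\nu-1}{\nu}$ is nonnegative, so the product expands by rearrangement of series with positive terms, yielding $\zeta(s)^\alpha=\sum_n\tau_\alpha(n)n^{-s}$ with $\tau_\alpha$ multiplicative as stated. For complex $\alpha$ I would use the termwise inequality $|\alpha+\ell-1|\le|\alpha|+\ell-1$ inside the product $\binom{\alpha+\nu-1}{\nu}=\prod_{\ell=1}^\nu(\alpha+\ell-1)/\nu!$ to deduce $|\tau_\alpha(p^\nu)|\le\tau_{|\alpha|}(p^\nu)$, hence $|\tau_\alpha(n)|\le\tau_{|\alpha|}(n)$ by multiplicativity. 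The already-established nonnegative case then gives $\sum_n|\tau_\alpha(n)|n^{-\sigma}\le\zeta(\sigma)^{|\alpha|}<\infty$ for $\sigma>1$, which both establishes absolute convergence and justifies rearranging the Euler product in the complex case, yielding \cref{tau_explicit:series}.

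For the subpolynomial bound \cref{tau_explicit:bound}, fix $\epsilon>0$ and observe that $n\mapsto|\tau_\alpha(n)|n^{-\epsilon}$ is multiplicative. Since $|\tau_\alpha(p^\nu)|\le\tau_{|\alpha|}(p^\nu)\ll_\alpha(\nu+1)^{|\alpha|}$ grows only polynomially in $\nu$ while $p^{\nu\epsilon}$ grows exponentially, the inequality $|\tau_\alpha(p^\nu)|\le p^{\nu\epsilon}$ holds for every $\nu\ge1$ once $p\ge p_0(\alpha,\epsilon)$ (the worst $\nu$ being $\nu=1$ since $\log(\nu+1)/\nu$ is decreasing on $[1,\infty)$). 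For the finitely many primes $p<p_0$, the same polynomial-vs-exponential comparison shows $\sup_{\nu\ge0}|\tau_\alpha(p^\nu)|p^{-\nu\epsilon}$ is finite, and multiplying these finitely many supremums gives the constant implicit in \cref{tau_explicit:bound}. The main obstacle is not a deep one: it is mostly a matter of organising the Euler product manipulations so that the complex case is bootstrapped cleanly from the nonnegative real case, which is precisely the role played by the comparison $|\tau_\alpha(n)|\le\tau_{|\alpha|}(n)$ set up at the outset.
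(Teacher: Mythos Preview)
Your proof is correct and follows essentially the same route as the paper's: both expand the local Euler factors by the binomial series, both control the complex case via the pointwise comparison $|\tau_\alpha(p^\nu)|\le\tau_{|\alpha|}(p^\nu)$ (the paper writes this as $\prod_{\ell=1}^\nu(1+|\alpha|/\ell)$), and both bound $\tau_{|\alpha|}(p^\nu)$ polynomially in $\nu$. The only cosmetic differences are that the paper establishes absolute convergence directly from $\sum_{\nu\ge1}|\binom{\alpha+\nu-1}{\nu}|p^{-\nu\sigma}\ll p^{-\sigma}$ rather than bootstrapping from the nonnegative real case, and for \cref{tau_explicit:bound} the paper passes through the inequality $|\tau_\alpha(p^\nu)|\le\tau(p^\nu)^{2|\alpha|}$ and then cites the standard bound $\tau(n)\ll n^\epsilon$, whereas you carry out the polynomial-versus-exponential comparison by hand.
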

\begin{proof}
We have the Taylor expansion
\begin{equation}
\label{tau_explicit:binomial}
(1-z)^{-\alpha}
=
\sum_{\nu=0}^{\infty}\binom{\alpha+\nu-1}{\nu}z^{\nu},
\end{equation}
which has the radius of convergence $1$. Thus, for $\sigma>1$
and $p\ge2$, we have
\[
\sum_{\nu=1}^{\infty}\left|\binom{\alpha+\nu-1}{\nu}\right|\frac{1}{p^{\nu\sigma}}
\ll p^{-\sigma}.
\]
Therefore, by using the Euler product,
\[
\sum_{n=1}^{\infty}\frac{|\tau_{\alpha}(n)|}{n^s}
=
\prod_{p}\left(1+
\sum_{\nu=1}^{\infty}\left|\binom{\alpha+\nu-1}{\nu}\right|\frac{1}{p^{\nu\sigma}}\right)
=
\prod_{p}(1+O(p^{-\sigma}))<+\infty
\]
for $\sigma>1$. This proves that the series \cref{tau_explicit:series}
converges absolutely for $\sigma>1$. Then by comparing the Euler product
of the both sides of \cref{tau_explicit:series} and checking that
their argument coincides for $\sigma>1$, we obtain
\cref{tau_explicit:series} and \cref{tau_explicit:explicit}.
Finally, by \cref{tau_explicit:explicit},
\[
\left|\tau_{\alpha}(p^{\nu})\right|
\le
\prod_{\ell=1}^{\nu}\left(1+\frac{|\alpha|}{\ell}\right)
\le
\exp\left(|\alpha|\sum_{\ell=1}^{\nu}\frac{1}{\ell}\right)
\le
(3\nu)^{|\alpha|}
\le
(\nu+1)^{2|\alpha|}
\le
\tau(p^{\nu})^{2|\alpha|}.
\]
Therefore, by the well-known bound $\tau(n)\ll n^{\epsilon}$, we arrive at
\[
|\tau_{\alpha}(n)|\le\tau(n)^{2|\alpha|}\ll n^{\epsilon}.
\]
This completes the proof.
\end{proof}

\begin{lemma}
\label{Lem:tau_condition}
For any complex number $\alpha$,
the multiplicative function $\tau_{\alpha}(n)$ satisfies
{\upshape\cref{V1}}, {\upshape\cref{V2}}, {\upshape\cref{V3}} for some constant $C$
and the estimate
\begin{equation}
\label{tau_condition:L1}
\sum_{n\le x}\frac{|\tau_{\alpha}(n)|}{n}
\ll
(\log x)^{|\alpha|}
\end{equation}
for $x\ge4$.
\end{lemma}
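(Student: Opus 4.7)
The plan rests on the explicit product formula $\tau_\alpha(p^\nu) = \binom{\alpha+\nu-1}{\nu}$ from \cref{Lem:tau_explicit}, which in particular yields $\tau_\alpha(p) = \alpha$ for every prime $p$. Since $\tau_\alpha(p)$ does not depend on $p$, condition \cref{V1} holds with any $C \ge |\alpha|$, and every single term in the sum defining \cref{V3} vanishes, so \cref{V3} is trivial with any $C \ge 2$. The real content of the lemma is therefore \cref{V2} and the estimate \cref{tau_condition:L1}.

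For both of these I would reduce to the nonnegative majorant $\tau_{|\alpha|}(n)$. The key pointwise comparison is
\[
|\tau_\alpha(n)| \le \tau_{|\alpha|}(n) \qquad (n \ge 1),
\]
which is checked on prime powers from
\[
|\tau_\alpha(p^\nu)| = \frac{\prod_{\ell=1}^{\nu}|\alpha+\ell-1|}{\nu!} \le \frac{\prod_{\ell=1}^{\nu}(|\alpha|+\ell-1)}{\nu!} = \tau_{|\alpha|}(p^\nu),
\]
and then extended to all $n$ by multiplicativity. Granting this comparison, \cref{tau_condition:L1} reduces to the classical mean value bound $\sum_{n \le x}\tau_{|\alpha|}(n) \ll x(\log x)^{|\alpha|-1}$ for $|\alpha|>0$ (with the case $\alpha = 0$ being trivial), followed by partial summation to produce the factor $(\log x)^{|\alpha|}$.

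For \cref{V2} I would square the comparison, $|\tau_\alpha(n)|^2 \le \tau_{|\alpha|}(n)^2$, and identify the Dirichlet series of the multiplicative function $\tau_{|\alpha|}(n)^2$ as
\[
\sum_{n=1}^{\infty}\frac{\tau_{|\alpha|}(n)^2}{n^s} = \zeta(s)^{|\alpha|^2} G(s),
\]
where $G(s)$ is given by an Euler product that converges absolutely in $\sigma > \tfrac{1}{2}$; the factorisation is obtained by matching Euler factors and using that the leading coefficient $\tau_{|\alpha|}(p)^2 = |\alpha|^2$ is absorbed into the exponent of $\zeta(s)$. A standard Selberg--Delange or contour-shift argument then yields $\sum_{n \le x}\tau_{|\alpha|}(n)^2 \ll x(\log x)^{|\alpha|^2 - 1}$, which gives \cref{V2} with any $C \ge \max(2, |\alpha|, |\alpha|^2)$.

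The only mildly delicate point is justifying the mean value bounds $\sum_{n \le x}\tau_\beta(n) \ll x(\log x)^{\beta-1}$ and $\sum_{n \le x}\tau_\beta(n)^2 \ll x(\log x)^{\beta^2-1}$ for real $\beta \ge 0$. For the $L^1$ bound one could alternatively dominate $\tau_{|\alpha|}(n)$ by $\tau_k(n)$ for any integer $k \ge |\alpha|$ (since $\binom{x+\nu-1}{\nu}$ is nondecreasing in $x \ge 0$) and then iterate Dirichlet's hyperbola method, at the price of weakening $(\log x)^{|\alpha|}$ to $(\log x)^{\lceil |\alpha|\rceil}$; to obtain the sharp exponent stated in \cref{tau_condition:L1} the Selberg--Delange route appears to be necessary, but this is routine and forms the only real obstacle.
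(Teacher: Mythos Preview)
Your argument is correct, but the paper takes a considerably more elementary route for \cref{V2} and \cref{tau_condition:L1}. Rather than reducing to $\tau_{|\alpha|}$ and invoking Selberg--Delange, the paper bounds the partial sums directly by the truncated Euler product:
\[
\sum_{n\le x}\frac{|\tau_\alpha(n)|}{n}
\le \prod_{p\le x}\left(1+\frac{|\tau_\alpha(p)|}{p}+\frac{|\tau_\alpha(p^2)|}{p^2}+\cdots\right)
\le \prod_{p\le x}\left(1+\frac{|\alpha|}{p}\right)\prod_{p}\left(1+O(p^{-3/2})\right),
\]
and then Mertens' theorem gives $\prod_{p\le x}(1+|\alpha|/p)\ll(\log x)^{|\alpha|}$ immediately, with the tail handled by the crude bound $\tau_\alpha(p^\nu)\ll p^{\nu\epsilon}$ from \cref{Lem:tau_explicit}. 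The same trick, with an extra factor of $x$ and $|\alpha|^2$ in place of $|\alpha|$, handles \cref{V2}. In particular, your remark that Selberg--Delange ``appears to be necessary'' for the sharp exponent $(\log x)^{|\alpha|}$ is not right: the elementary Euler-product-plus-Mertens argument already delivers it, and the paper never touches complex analysis here. Your majorant $|\tau_\alpha|\le\tau_{|\alpha|}$ is a clean observation and would work, but it is not needed, and the machinery you propose to apply to it is heavier than the problem requires.
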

\begin{proof}
By \cref{Lem:tau_explicit}, we have $\tau(p)=\alpha$ for every prime $p$.
Thus the conditions {\upshape\cref{V1}}, {\upshape\cref{V3}} trivially holds.
For \cref{V2}, we start with
\begin{align}
\sum_{n\le x}|\tau_{\alpha}(n)|^2
&\le
x\sum_{n\le x}\frac{|\tau_{\alpha}(n)|^2}{n}\\
&\le
x\prod_{p\le x}
\left(1+\frac{|\tau_{\alpha}(p)|^2}{p}+\frac{|\tau_{\alpha}(p^2)|^2}{p^2}+\cdots\right)\\
&\le
x\prod_{p\le x}
\left(1+\frac{|\alpha|^2}{p}\right)
\left(1+\frac{|\tau_{\alpha}(p^2)|^2}{p^2}+\frac{|\tau_{\alpha}(p^3)|^2}{p^3}
+\cdots\right).
\end{align}
Then by using Mertens' theorem and \cref{tau_explicit:bound}, this gives
\[
\sum_{n\le x}|\tau_{\alpha}(n)|^2
\ll
x(\log x)^{|\alpha|^2}
\prod_{p}\left(1+O(p^{-\frac{3}{2}})\right)
\ll
x(\log x)^{|\alpha|^2}.
\]
Therefore, taking $C\ge2$ larger than some constant depending only on $\alpha$,
we find that $\tau_{\alpha}(n)$ satisfies \cref{V1}, \cref{V2}, and \cref{V3}
with the same constant $C$. For the estimate \cref{tau_condition:L1},
we proceed similarly to obtain
\begin{align}
\sum_{n\le x}\frac{|\tau_{\alpha}(n)|}{n}
&\le
\prod_{p\le x}
\left(1+\frac{|\tau_{\alpha}(p)|}{p}+\frac{|\tau_{\alpha}(p^2)|}{p^2}+\cdots\right)\\
&\le
\prod_{p\le x}
\left(1+\frac{|\alpha|}{p}\right)
\left(1+\frac{|\tau_{\alpha}(p^2)|}{p^2}+\frac{|\tau_{\alpha}(p^3)|}{p^3}
+\cdots\right)
\ll
(\log x)^{|\alpha|}.
\end{align}
This completes the proof.
\end{proof}

\begin{proof}[Proof of \cref{Thm:BP_completed}]
By \cref{Thm:BP}, it suffices to prove
\[
\sum_{n\le y}\frac{v(n)}{n}\psi\left(\frac{x}{n}\right)
\ll(\log x)^{\frac{2|\alpha|}{3}}(\log\log x)^{\frac{|\alpha|}{3}}
\]
for sufficiently large $x$.
By \cref{BP:v}, we have
\[
v(n)=\sum_{dm=n}\tau_{\alpha}(d)b(m).
\]
Therefore,
\begin{equation}
\label{BP_completed:decompose}
\sum_{n\le y}\frac{v(n)}{n}\psi\left(\frac{x}{n}\right)
=
\sum_{m\le y}\frac{b(m)}{m}
\sum_{d\le y/m}\frac{\tau_{\alpha}(d)}{d}\psi\left(\frac{x/m}{d}\right).
\end{equation}
We apply \cref{Thm:v_thm} to the inner sum.
We have already checked in \cref{Lem:tau_condition}
that $\tau_{\alpha}(n)$ satisfies the assumptions on $v(n)$ of \cref{Thm:v_thm}
with $\kappa=|\alpha|$. For the assumption on $y$, we have
\[
\frac{y}{m}
=\frac{x}{m}\exp(-(\log x)^{\frac{1}{6}})
\le\frac{x}{m}\exp\left(-\left(\log\frac{x}{m}\right)^{\frac{1}{6}}\right)
\]
so this assumption is satisfied with $\theta=1/6$.
Also, we have $x/m\ge x/y>4$ for sufficiently large $x$.
Thus, by \cref{Thm:v_thm},
\[
\sum_{d\le y/m}\frac{\tau_{\alpha}(d)}{d}\psi\left(\frac{x/m}{d}\right)
\ll
\left(\log\frac{x}{m}\right)^{\frac{2|\alpha|}{3}}
\left(\log\log\frac{x}{m}\right)^{\frac{|\alpha|}{3}}
\ll
\left(\log x\right)^{\frac{2|\alpha|}{3}}
\left(\log\log x\right)^{\frac{|\alpha|}{3}}.
\]
By substituting this estimate into \cref{BP_completed:decompose}
and using the convergence of \cref{BP:b},
\begin{align}
\sum_{n\le y}\frac{v(n)}{n}\psi\left(\frac{x}{n}\right)
&\ll
\left(\log x\right)^{\frac{2|\alpha|}{3}}
\left(\log\log x\right)^{\frac{|\alpha|}{3}}
\sum_{m\le y}\frac{|b(m)|}{m}\\
&\ll
\left(\log x\right)^{\frac{2|\alpha|}{3}}
\left(\log\log x\right)^{\frac{|\alpha|}{3}}.
\end{align}
This completes the proof.
\end{proof}

\section{Examples}
\label{Section:examples}
We now give some examples of \cref{Thm:BP_completed}.
For most of the examples below, the preparation has been already done
by Balakrishnan and P\'etermann~\cite{Balakrishnan_Petermann},
so we just refer \cite{Balakrishnan_Petermann} to check
the hypothesis of \cref{Thm:BP} for such examples.
We also include the singular series of the Goldbach conjecture
\begin{equation}
\label{singular_series}
\mathfrak{S}(n)
:=
\left\{
\begin{array}{>{\displaystyle}c>{\displaystyle}l}
\mathfrak{S}_2\prod_{\substack{p\mid n\\p>2}}\left(\frac{p-1}{p-2}\right)
&(\text{if $n$ is even}),\\[4mm]
0&(\text{if $n$ is odd}),\\
\end{array}
\right.\ \ 
\mathfrak{S}_2
:=
2\prod_{p>2}\left(1-\frac{1}{(p-1)^2}\right),
\end{equation}
as an example which was not mentioned in \cite{Balakrishnan_Petermann}.
This example was considered by Friedlander and Goldston~\cite{Friedlander_Goldston}
and by Languasco~\cite{Languasco1,Languasco2}.
We introduce a multiplicative function
\begin{equation}
\label{mini_singular_series}
\mathfrak{s}(n)
:=
\prod_{\substack{p\mid n\\p>2}}\left(\frac{p-1}{p-2}\right)
\end{equation}
in order to deal with the singular series~\cref{singular_series}.

\begin{lemma}
\label{Lem:check_h}
For any complex number $\alpha$,
we have
\begin{align}
\sum_{n=1}^{\infty}\frac{1}{n^s}\left(\frac{\sigma(n)}{n}\right)^{\alpha}
&=
\zeta(s)\zeta(s+1)^{\alpha}f_{\alpha}^{(1)}(s+1),\\
\sum_{n=1}^{\infty}\frac{1}{n^s}\left(\frac{n}{\phi(n)}\right)^{\alpha}
&=
\zeta(s)\zeta(s+1)^{\alpha}f_{\alpha}^{(2)}(s+1),\\
\sum_{n=1}^{\infty}\frac{1}{n^s}\left(\frac{\sigma(n)}{\phi(n)}\right)^{\frac{\alpha}{2}}
&=
\zeta(s)\zeta(s+1)^{\alpha}f_{\alpha}^{(3)}(s+1),
\end{align}
and
\[
\sum_{n=1}^{\infty}\frac{\mathfrak{s}(n)^{\alpha}}{n^s}
=
\zeta(s)\zeta(s+1)^{\alpha}f_{\alpha}^{(4)}(s+1),
\]
where $f_{\alpha}^{(i)}(s)$ $(1\le i\le 4)$
are Dirichlet series convergent for $\sigma>1/2$.
\end{lemma}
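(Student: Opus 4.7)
The plan is to treat all four identities uniformly via Euler products. In each case the arithmetic function $g(n)$ on the left is multiplicative (for the fourth, by the definition of $\mathfrak{s}$), so for $\sigma>1$ the Dirichlet series factors as $\sum_{n\ge1}g(n)n^{-s}=\prod_p F_p(s)$ with local factor $F_p(s)=\sum_{\nu\ge0}g(p^{\nu})p^{-\nu s}$. I would extract the Euler factors corresponding to $\zeta(s)\zeta(s+1)^{\alpha}$ by setting
\[
H_p(s):=(1-p^{-s})(1-p^{-s-1})^{\alpha}F_p(s),
\]
and prove the key estimate
\[
|H_p(s)-1|\ll p^{-\sigma-2}+p^{-2\sigma-2}
\]
uniformly in $p$, for $\sigma$ bounded below (the implicit constant depending only on $\alpha$). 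Once this is granted, $\prod_p H_p(s)$ converges absolutely in $\sigma>-1/2$, so that defining $f_{\alpha}^{(i)}(w):=\prod_p H_p(w-1)$ and expanding the product yields a Dirichlet series in $w$ absolutely convergent for $\Re w>1/2$, as required.

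To prove the residual estimate, I would compute $g(p^{\nu})$ in closed form: in case (1), $g(p^{\nu})=((1-p^{-\nu-1})/(1-p^{-1}))^{\alpha}$; in case (2), $g(p^{\nu})=(1-1/p)^{-\alpha}$ for $\nu\ge1$; in case (3), $g(p^{\nu})=((1-p^{-\nu-1})/(1-p^{-1})^2)^{\alpha/2}$; and in case (4), $g(p^{\nu})=((p-1)/(p-2))^{\alpha}$ for $p>2$ with $g(2^{\nu})=1$. The unifying observations are the Taylor expansions $g(p)=1+\alpha/p+O(p^{-2})$, which in case (3) uses $\log((1+1/p)/(1-1/p))=2/p+O(p^{-3})$ so that $((1+1/p)/(1-1/p))^{\alpha/2}=1+\alpha/p+O(p^{-2})$, together with the telescoping bound $g(p^{\nu})-g(p^{\nu-1})\ll p^{-\nu}$ valid for all $\nu\ge1$, which in each case follows from $g(p^{\nu})/g(p^{\nu-1})=1+O(p^{-\nu})$ and the boundedness of $g(p^{\nu-1})$.

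Armed with these two facts, a straightforward computation gives
\[
(1-p^{-s})F_p(s)=1+\bigl(g(p)-1\bigr)p^{-s}+\sum_{\nu\ge2}\bigl(g(p^{\nu})-g(p^{\nu-1})\bigr)p^{-\nu s}=1+\alpha p^{-s-1}+O(p^{-s-2})+O(p^{-2s-2}),
\]
and multiplying by $(1-p^{-s-1})^{\alpha}=1-\alpha p^{-s-1}+O(p^{-2s-2})$ produces the desired cancellation $H_p(s)=1+O(p^{-s-2})+O(p^{-2s-2})$. In case (4) the single prime $p=2$ must be handled separately: there $F_2(s)=(1-2^{-s})^{-1}$, so $H_2(s)=(1-2^{-s-1})^{\alpha}$, which is a fixed analytic factor that does not affect the region of convergence. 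The main (and only) obstacle is the case-by-case bookkeeping for the two elementary expansions of $g$ at prime powers; once these are in place, the rest is routine Euler-product manipulation.
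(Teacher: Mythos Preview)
Your proposal is correct and follows essentially the same approach as the paper. The paper phrases the argument in terms of Dirichlet convolutions (defining $v(n)$ via $\zeta(s)^{-1}Z(s)$ and then $b(n)=\sum_{dm=n}\tau_{-\alpha}(d)v(m)$, and bounding $b(p)=O(1/p)$, $b(p^{\nu})\ll p^{\nu\epsilon}$), whereas you work directly with the Euler local factors $H_p(s)$; but the substantive computations --- the expansion $g(p)=1+\alpha/p+O(p^{-2})$ and the telescoping bound $g(p^{\nu})-g(p^{\nu-1})\ll p^{-\nu}$ --- are identical in both, and the cancellation you exhibit between $(g(p)-1)p^{-s}$ and the first nontrivial term of $(1-p^{-s-1})^{\alpha}$ is exactly the paper's observation $b(p)=v(p)+\tau_{-\alpha}(p)=O(1/p)$.

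One small point worth tightening: the bound $|H_p(s)-1|\ll p^{-\sigma-2}+p^{-2\sigma-2}$ by itself gives absolute convergence of the \emph{product} $\prod_p H_p(s)$, but to conclude that $f_{\alpha}^{(i)}(w)$ is an absolutely convergent \emph{Dirichlet series} you need coefficientwise control, i.e.\ bounds on the power-series coefficients $h_{\nu}$ of $H_p(s)=\sum_{\nu\ge0}h_{\nu}p^{-\nu s}$ (this is what the paper does via $b(p^{\nu})$). Your intermediate estimates already supply this: from $|g(p^{\nu})-g(p^{\nu-1})|\ll p^{-\nu}$ and the polynomial growth of $\binom{\alpha}{k}$ one gets $|h_{1}|\ll p^{-2}$ and $|h_{\nu}|\ll_{\alpha}(\nu+1)^{O(1)}p^{-\nu}$ for $\nu\ge2$, which is enough. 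Just make that step explicit.
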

\begin{proof}
For the first two examples, see Lemma~5.1 and Lemma~5.2
of \cite[p.59--60]{Balakrishnan_Petermann}. In \cite{Balakrishnan_Petermann},
real $\alpha$ is mainly considered, but there is no difficulty to obtain
the same result for complex $\alpha$.
The other two examples can be dealt with in the same way.
For the ease of the reader, we prove the third example,
for which the special case $\alpha=2$ is stated in \cite[p.40, (3)]{Balakrishnan_Petermann}.

Let
\[
Z(s)
=\sum_{n=1}^{\infty}\frac{1}{n^s}\left(\frac{\sigma(n)}{\phi(n)}\right)^{\frac{\alpha}{2}},\quad
\zeta(s)^{-1}Z(s)
=\sum_{n=1}^{\infty}\frac{v(n)}{n^{s+1}},
\]
\[
f_{\alpha}^{(3)}(s+1)
=\zeta(s)^{-1}\zeta(s+1)^{-\alpha}Z(s)
=\sum_{n=1}^{\infty}\frac{b(n)}{n^{s+1}}
\]
for $\sigma>1$. Then $v(n)$ and $b(n)$ become multiplicative functions and
\begin{equation}
\label{check_h:v_b}
\frac{v(n)}{n}=\sum_{dm=n}\mu(d)\left(\frac{\sigma(m)}{\phi(m)}\right)^{\frac{\alpha}{2}},\quad
b(n)=\sum_{dm=n}\tau_{-\alpha}(d)v(m).
\end{equation}
For every prime $p$ and every integer $\nu\ge1$, we have an identity
\[
\left(\frac{\sigma(p^{\nu})}{\phi(p^{\nu})}\right)^{\frac{\alpha}{2}}
=
\left(\frac{1+p^{-1}+\cdots+p^{-\nu}}{1-p^{-1}}\right)^{\frac{\alpha}{2}}.
\]
By using the binomial expansion, we obtain
\[
\left(\frac{\sigma(p)}{\phi(p)}\right)^{\frac{\alpha}{2}}
=
\left(1+\frac{2}{p-1}\right)^{\frac{\alpha}{2}}
=
1+\binom{\frac{\alpha}{2}}{1}\frac{2}{p-1}+O\left(\frac{1}{p^2}\right)
=
1+\frac{\alpha}{p}+O\left(\frac{1}{p^2}\right)
\]
and
\[
\left(\frac{\sigma(p^{\nu})}{\phi(p^{\nu})}\right)^{\frac{\alpha}{2}}
=
\frac{(1+p^{-1}+\cdots+p^{-(\nu-1)})^{\frac{\alpha}{2}}+O(p^{-\nu})}{(1-p^{-1})^{\frac{\alpha}{2}}}
=
\left(\frac{\sigma(p^{\nu-1})}{\phi(p^{\nu-1})}\right)^{\frac{\alpha}{2}}
+
O\left(\frac{1}{p^{\nu}}\right),
\]
where the implicit constant depends on $\alpha$.
Therefore, by \cref{check_h:v_b},
\begin{align}
\frac{v(p)}{p}
&=\left(\frac{\sigma(p)}{\phi(p)}\right)^{\frac{\alpha}{2}}-1
=\frac{\alpha}{p}+O\left(\frac{1}{p^2}\right),\\
\frac{v(p^{\nu})}{p^{\nu}}
&=
\left(\frac{\sigma(p^{\nu})}{\phi(p^{\nu})}\right)^{\frac{\alpha}{2}}
-
\left(\frac{\sigma(p^{\nu-1})}{\phi(p^{\nu-1})}\right)^{\frac{\alpha}{2}}
=
O\left(\frac{1}{p^{\nu}}\right)\quad\text{for $\nu\ge2$}.
\end{align}
By \cref{Lem:tau_explicit} and \cref{check_h:v_b},
\[
b(p)
=
v(p)+\tau_{-\alpha}(p)=O\left(\frac{1}{p}\right),\quad
b(p^{\nu})
\ll
\sum_{dm=p^{\nu}}|\tau_{-\alpha}(d)|
\ll_{\epsilon}
p^{\nu\epsilon}.
\]
Thus, for $\sigma>1/2$, by taking $\epsilon>0$
sufficiently small so that $\sigma-\epsilon>1/2$,
\begin{align}
\sum_{n=1}^{\infty}\frac{|b(n)|}{n^{\sigma}}
&=
\prod_{p}
\left(1+\sum_{\nu=1}^{\infty}\frac{|b(p^{\nu})|}{p^{\nu\sigma}}\right)\\
&=
\prod_{p}
\left(1+O\left(\frac{1}{p^{\sigma+1}}
+\sum_{\nu=2}^{\infty}\frac{1}{p^{\nu(\sigma-\epsilon)}}\right)\right)\\
&=
\prod_{p}
\left(1+O\left(\frac{1}{p^{\sigma+1}}+\frac{1}{p^{2(\sigma-\epsilon)}}\right)\right)
<+\infty.
\end{align}
This completes the proof.
\end{proof}

\begin{theorem}
\label{Thm:asymp}
For any complex number $\alpha$ and $x\ge4$, we have
\begin{align}
\label{asymp:1}
\sum_{n\le x}\left(\frac{\sigma(n)}{n}\right)^{\alpha}
&=
A^{(1)}x
+\sum_{r=0}^{[\Re\alpha]}A_{r}^{(1)}(\log x)^{\alpha-r}
+O\left((\log x)^{\frac{2|\alpha|}{3}}(\log\log x)^{\frac{|\alpha|}{3}}\right),\\
\label{asymp:2}
\sum_{n\le x}\left(\frac{n}{\phi(n)}\right)^{\alpha}
&=
A^{(2)}x
+\sum_{r=0}^{[\Re\alpha]}A_{r}^{(2)}(\log x)^{\alpha-r}
+O\left((\log x)^{\frac{2|\alpha|}{3}}(\log\log x)^{\frac{|\alpha|}{3}}\right),\\
\label{asymp:3}
\sum_{n\le x}\left(\frac{\sigma(n)}{\phi(n)}\right)^{\frac{\alpha}{2}}
&=
A^{(3)}x
+\sum_{r=0}^{[\Re\alpha]}
A_{r}^{(3)}(\log x)^{\alpha-r}
+O\left((\log x)^{\frac{2|\alpha|}{3}}(\log\log x)^{\frac{|\alpha|}{3}}\right),\\
\label{asymp:4}
\sum_{\substack{n\le x\\[0.3mm]n\colon\text{\upshape even}}}\mathfrak{S}(n)^{\alpha}
&=
A^{(4)}x
+\sum_{r=0}^{[\Re\alpha]}
A_{r}^{(4)}(\log x)^{\alpha-r}
+O\left((\log x)^{\frac{2|\alpha|}{3}}(\log\log x)^{\frac{|\alpha|}{3}}\right),
\end{align}
where $(A_{r}^{(i)})$ are complex coefficients computable
from the Laurent expansions of the Dirichlet series in \cref{Lem:check_h} at $s=1$
and the implicit constants depend only on $\alpha$.
\end{theorem}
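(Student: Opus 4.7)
The plan is to derive each of the four asymptotic formulae as a direct application of \cref{Thm:BP_completed}, using the Dirichlet series identities prepared in \cref{Lem:check_h}. For the first three sums, the arithmetic functions $(\sigma(n)/n)^{\alpha}$, $(n/\phi(n))^{\alpha}$, and $(\sigma(n)/\phi(n))^{\alpha/2}$ have, by \cref{Lem:check_h}, Dirichlet series of the exact form $\zeta(s)\zeta(s+1)^{\alpha}f_{\alpha}^{(i)}(s+1)$ with $f_{\alpha}^{(i)}(s)$ absolutely convergent for $\sigma>1/2$. This matches the hypothesis of \cref{Thm:BP} with $\lambda=1/2$, so \cref{Thm:BP_completed} yields \cref{asymp:1}, \cref{asymp:2}, and \cref{asymp:3} immediately, with $A^{(i)}=\zeta(2)^{\alpha}f_{\alpha}^{(i)}(2)$ and the coefficients $A_{r}^{(i)}$ determined by the Laurent expansion of $\zeta(s)^{\alpha}f_{\alpha}^{(i)}(s)$ at $s=1$.

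The fourth sum requires one short additional manipulation to remove the parity restriction. I would first rewrite it as $\mathfrak{S}_{2}^{\alpha}\sum_{n\le x,\ n\text{ even}}\mathfrak{s}(n)^{\alpha}$ via the definition \cref{singular_series}. Since $\mathfrak{s}(2^{\nu})=1$ for every $\nu\ge 0$, the Euler factor of $\sum_{n}\mathfrak{s}(n)^{\alpha}/n^{s}$ at the prime $2$ is simply $(1-2^{-s})^{-1}$, so its odd part equals $(1-2^{-s})\zeta(s)\zeta(s+1)^{\alpha}f_{\alpha}^{(4)}(s+1)$. Subtracting from the full series, the even part equals $\zeta(s)\zeta(s+1)^{\alpha}g_{\alpha}(s+1)$ with $g_{\alpha}(s)=2\cdot 2^{-s}f_{\alpha}^{(4)}(s)$, which is still a Dirichlet series absolutely convergent for $\sigma>1/2$. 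Hence \cref{Thm:BP_completed} applies with this $g_{\alpha}$ in place of $f$, and produces \cref{asymp:4} after multiplication by $\mathfrak{S}_{2}^{\alpha}$.

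No serious obstacle arises: every input is either already packaged in \cref{Lem:check_h} or is a short Euler-product computation, and the error term $O((\log x)^{2|\alpha|/3}(\log\log x)^{|\alpha|/3})$ is produced uniformly by \cref{Thm:BP_completed}. The only point requiring any care is the parity surgery for the singular series, which is cleanly handled by the observation that $\mathfrak{s}$ is constant on powers of $2$, so that the shift $2^{-s}=2\cdot 2^{-(s+1)}$ can be absorbed into a Dirichlet series in the variable $s+1$ without disturbing the region of absolute convergence.
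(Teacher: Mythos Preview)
Your argument is correct. For \cref{asymp:1}--\cref{asymp:3} it coincides with the paper's proof: both simply invoke \cref{Thm:BP_completed} together with \cref{Lem:check_h}.

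For \cref{asymp:4} you take a different route from the paper. The paper uses the change of variable $n\mapsto 2n$ (again exploiting $\mathfrak{s}(2^{\nu})=1$) to write $\sum_{n\le x,\ n\text{ even}}\mathfrak{S}(n)^{\alpha}=\mathfrak{S}_{2}^{\alpha}\sum_{n\le x/2}\mathfrak{s}(n)^{\alpha}$, applies \cref{Thm:BP_completed} directly to $\mathfrak{s}(n)^{\alpha}$ at the point $x/2$, and then has to expand each $(\log(x/2))^{\alpha-r}$ by the binomial series to convert back to powers of $\log x$. You instead peel off the even part at the level of the Dirichlet series, observing that the even subseries is $2^{-s}$ times the full series and hence still of the shape $\zeta(s)\zeta(s+1)^{\alpha}g_{\alpha}(s+1)$ with $g_{\alpha}(s)=2\cdot 2^{-s}f_{\alpha}^{(4)}(s)$ absolutely convergent for $\sigma>1/2$; then \cref{Thm:BP_completed} applies at $x$ with no further manipulation. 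Your approach is slightly cleaner in that it avoids the trailing binomial expansion, while the paper's approach keeps the analytic input fixed (one applies \cref{Thm:BP_completed} to the series already prepared in \cref{Lem:check_h}) at the cost of that short post-processing step. Both are entirely valid.
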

\begin{proof}
The first three asymptotic formulas immediately follow by \cref{Thm:BP_completed} and \cref{Lem:check_h}.
For the last asymptotic formula, by \cref{singular_series} and \cref{mini_singular_series},
we obtain
\[
\sum_{\substack{n\le x\\[0.3mm]n\colon\text{\upshape even}}}\mathfrak{S}(n)^{\alpha}
=
\mathfrak{S}_2^{\alpha}
\sum_{2n\le x}\mathfrak{s}(2n)^{\alpha}
=
\mathfrak{S}_2^{\alpha}
\sum_{n\le x/2}\mathfrak{s}(n)^{\alpha}
\]
since $\mathfrak{s}(2^{\nu})=1$. By \cref{Thm:BP_completed} and \cref{Lem:check_h},
we obtain
\[
\mathfrak{S}_2^{\alpha}\sum_{n\le x/2}\mathfrak{s}(n)^{\alpha}
=
\tilde{A}^{(4)}x
+\sum_{r=0}^{[\Re\alpha]}
\tilde{A}_{r}^{(4)}\left(\log\frac{x}{2}\right)^{\alpha-r}
+O\left((\log x)^{\frac{2|\alpha|}{3}}(\log\log x)^{\frac{|\alpha|}{3}}\right).
\]
On inserting the expansion
\begin{align}
\left(\log\frac{x}{2}\right)^{\alpha-r}
&=
(\log x)^{\alpha-r}
\left(1-\frac{\log 2}{\log x}\right)^{\alpha-r}\\
&=
\sum_{\ell=0}^{\infty}
(-1)^{\ell}\binom{\alpha-r}{\ell}(\log 2)^{\ell}(\log x)^{\alpha-r-\ell}\\
&=
\sum_{\ell=0}^{[\Re\alpha]-r}
(-1)^{\ell}\binom{\alpha-r}{\ell}(\log 2)^{\ell}(\log x)^{\alpha-r-\ell}
+O(1),
\end{align}
we obtain the last asymptotic formula.
\end{proof}

\begin{remark}
To the knowledge of the author,
the asymptotic formulas in \cref{Thm:asymp}
provide the error term estimates better than the previous results except the cases
\begin{itemize}
\item $\alpha=+1$ of the formula \cref{asymp:1} (by Walfisz~\cite{Walfisz}),
\item $\alpha=+1$ of the formula \cref{asymp:2} (by Sitaramachandrarao~\cite{Sitaramachandrarao}),
\item $\alpha=-1$ of the formula \cref{asymp:2} (by Liu~\cite{HQ_Liu}),
\item $\alpha=+1$ of the formula \cref{asymp:3} (probably well known to the experts),
\item $\alpha=+1$ of the formula \cref{asymp:4} (by Friedlander and Goldston~\cite{Friedlander_Goldston}).
\end{itemize}
These cases except Liu's result provide the error term estimate
\[
O((\log x)^{\frac{2}{3}})
\]
which is better than \cref{Thm:asymp}.
The source of this phenomenon
is the fact that there is no log-power in \cref{Lem:Walfisz}
similar to $(\log Q)^{6C+6}$ in \cref{Lem:v}.
\end{remark}

\section*{Acknowledgements}
The author would like to deeply thank
Pieter Moree, Sumaia Saad Eddin, and Alisa Sedunova
for motivating the author on this topic through the joint work~\cite{MSSS}.
The author would like to express his sincere gratitude to Kohji Matsumoto
for his continued support and helpful advice.
The author is also indebted to
Hirotaka Kobayashi, Kota Saito, and Wataru Takeda
for having a seminar to check the result and giving valuable advice.
Finally, the author wishes to express his thanks
to Isao Kiuchi and Hiroshi Mikawa for their comments.
This work was supported by Grant-in-Aid for JSPS Research Fellow
(Grant Number: JP16J00906).


\vspace{3mm}

\begin{flushleft}
{\small
{\sc
Graduate School of Mathematics, Nagoya University,\\
Chikusa-ku, Nagoya 464-8602, Japan.
}

{\it E-mail address}: {\tt m14021y@math.nagoya-u.ac.jp}
}
\end{flushleft}

\begin{thebibliography}{99}
\bibitem{Balakrishnan_Petermann}
U. Balakrishnan and Y.-F. S. P\'etermann,
\emph{The Dirichlet series of $\zeta(s)\zeta(s+1)^{\alpha}f(s+1)$:
On an error term associated with its coefficients},
Acta Arith. \textbf{75} (1) (1996), 39--69;
Errata, Acta Arith. \textbf{87} (3) (1998), 287--289.
\bibitem{Drappeau_Topacogullari}
S. Drappeau and B. Topacogullari
Combinatorial identities and Titchmarsh's divisor problem
for multiplicative functions, arXiv preprint (2018),
\texttt{arXiv:1807.09569}.
\bibitem{Friedlander_Goldston}
J. B. Friedlander and D. A. Goldston,
\emph{Some singular series averages and the distribution of Goldbach numbers
in short intervals}, Illinois J. Math. \textbf{39} (1) (1995), 158--180.
\bibitem{Graham_Kolesnik}
S. W. Graham and G. Kolesnik,
\emph{Van der Corput's Method of Exponential Sums},
London Mathematical Society Lecture Note Series \textbf{126},
(Cambridge Univ. Press, 1991).
\bibitem{Ivic}
A. Ivi\'c, \emph{The Riemann-Zeta Function},
(John Wiley \& Sons, 1985).
\bibitem{Iwaniec_Kowalski}
H. Iwaniec and E. Kowalski, \emph{Analytic Number Theory},
American Mathematical Society Colloquium Publications \textbf{53},
(Amer. Math. Soc., 2004).
\bibitem{Karatsuba}
A. A. Karatsuba,
\emph{Estimates for trigonometric sums
by Vinogradov's method, and some applications},
Proc. Steklov Inst. Math. \textbf{112} (1971).
\bibitem{Languasco1}
A. Languasco,
\emph{A conditional result on Goldbach numbers in short intervals},
Acta Arith. \textbf{83} (2) (1998), 93--103.
\bibitem{Languasco2}
A. Languasco,
\emph{A singular series average and Goldbach numbers in short intervals},
Acta Arith. \textbf{83} (2) (1998), 171--179. 
\bibitem{HQ_Liu}
H.-Q. Liu,
\emph{On Euler's function},
Proc. Roy. Soc. Edinburgh Sect. A \textbf{146} (4) (2016), 769--775.
\bibitem{Mertens}
F. Mertens,
\emph{\"Uber einige asymptotische Gesetze der Zahlentheorie},
J. Reine Angew. Math. \textbf{77} (1874), 289--338.
\bibitem{Montgomery_Vaughan}
H. L. Montgomery and R. C. Vaughan,
\emph{Multiplicative Number Theory I. Classical Theory},
(Cambridge Univ. Press, 2007).
\bibitem{MSSS}
P. Moree, S. Saad Eddin, A. Sedunova, and Y. Suzuki,
\emph{Jordan totient quotients}, arXiv preprint (2018),
\texttt{arXiv:1810.04742}.
\bibitem{Petermann}
Y.-F. S. P\'etermann, \emph{On an estimate of Walfisz and Saltykov
for an error term related to the Euler function},
J. Th\'eor. Nombres Bordeaux \textbf{10} (1) (1998), 203--236.
\bibitem{Sitaramachandrarao}
R. Sitaramachandrarao,
\emph{On an error term of Landau},
Indian J. Pure Appl. Math. \textbf{13} (8) (1982), 882--885.
\bibitem{Tenenbaum}
G. Tenenbaum,
\emph{Introduction to Analytic and Probabilistic Number Theory}, 3rd. ed.,
Graduate Studies in Mathematics \textbf{163},
(American Mathematical Society, 2015).
\bibitem{Vaaler}
J. D. Vaaler, \emph{Some extremal functions in Fourier analysis},
Bull. Amer. Math. Soc. (N.S.) \textbf{12} (2) (1985), 183--216.
\bibitem{Walfisz}
A. Walfisz, \emph{Weylsche Exponentialsummen in der Neueren Zahlentheorie},
(VEB Deutscher Verlag der Wissenschaften, 1963).
\end{thebibliography}
\end{document}